\theoremstyle{definition}
\theoremstyle{remark}
\numberwithin{equation}{section}
\newtheorem{theorem}{Theorem}[section]
\newtheorem{definition}[theorem]{Definition}
\newtheorem{lemma}[theorem]{Lemma}
\newtheorem{remark}[theorem]{Remark}
\newtheorem{assumption}[theorem]{Assumption}
\newcommand{\tstep}{{\Delta t}}
\newcommand{\bc}{\mathbf{c}}
\newcommand{\bv}{\mathbf{v}}
\newcommand{\scl}{\varrho}
\newcommand{\uex}{u^{\text{ex}}}
\newcommand{\df}{\mathrm{d}}
\renewcommand{\Re}{\operatorname{Re}}
\renewcommand{\O}{\mathcal{O}}
\renewcommand{\o}{o}
\newcommand{\betaA}{\beta_{\text{A}}}
\newcommand{\betaB}{\beta_{\text{B}}}
\begin{document}

\title[Time-fractional Westervelt equation]{Numerical analysis of a time-stepping method for the Westervelt equation with time-fractional damping}

%    Information for first author
\author{Katherine Baker}
%    Address of record for the research reported here
\address{Department of  Mathematics, Heriot-Watt University}
\email{k.baker@hw.ac.uk}
%    \thanks will become a 1st page footnote.
%\thanks{The first author was supported in part by NSF Grant \#000000.}

%    Information for second author
\author{Lehel Banjai}
\address{Department of Mathematics, Heriot-Watt University}
\email{l.banjai@hw.ac.uk}
%\thanks{Support information for the second author.}

\author{Mariya Ptashnyk}
\address{Department of Mathematics, Heriot-Watt University}
\email{m.ptashnyk@hw.ac.uk}
%    General info
\subjclass[2020]{35L77, 65M06, 65M15, 35R11}

%\date{}

%\dedicatory{This paper is dedicated to our advisors.}

\keywords{fractional time derivatives, nonlinear wave equations, time-discretisation, convolution quadrature}

\begin{abstract}
We develop a numerical method for the Westervelt equation, an important equation in nonlinear acoustics, in the form where the attenuation is represented by a class of non-local in time operators. A semi-discretisation in time based on the trapezoidal rule and A-stable convolution quadrature is stated and analysed. Existence and regularity analysis of the continuous equations informs the stability and error analysis of the semi-discrete system. The error analysis includes the consideration of the singularity at $t = 0$ which is addressed by the use of a correction in the numerical scheme. Extensive numerical experiments confirm the theory.
\end{abstract}

\maketitle

\section{Introduction}

We consider the attenuated Westervelt equation modelling wave propagation through lossy media in cases where the wave propagation is poorly approximated by linear wave models. A typical application is in medical ultrasound, where the attenuation depends on a fractional power of the frequency with the fractional exponent determined by the type of tissue; see \cite[Chapter 4]{szabo:book}. This leads to models of the form
\[
\partial_t^2 u -\Delta u + a L u = k\partial_t^2 (u^2),
\]
 where $a,k$ are positive constants and the attenuation is represented by a nonlocal differential operator $L$. In this paper we consider $L v(t) =   -\int_0^t\beta(t-s) \partial_t \Delta v(s) ds$ with $\beta$ chosen as either
\[
\betaA(t) := \frac1{\Gamma(\mu)}t^{\mu-1} e^{-r t}, \qquad \mu \in (0,1), \; r \geq 0
\]
or 
\[
\betaB(t) := -\dot e_\mu(t), \qquad e_\mu(t):= E_{\mu,1}(-t^\mu),
\]
where, see \cite{Mainardi_ML},  $E_{\mu,\gamma}$ is the Mittag-Leffler function
\begin{equation}\label{eq:ML}
E_{\mu,\gamma}(z) := \sum_{k = 0}^\infty \frac{z^k}{\Gamma(\mu k + \gamma)}.
\end{equation}

Note that, for  $\beta = \betaA$ and $r = 0$, $L = -\partial_t^{1-\mu} \Delta$, where $\partial_t^{1-\mu}$ is the Caputo fractional derivative of order $1-\mu$. The value of $\mu$ depends on the tissue \cite[Chapter 4.3]{duck:book} and is used to model the frequency dependence of attenuation \cite[Chapter 3]{Baker2022}. See also the recent \cite{Barbara}, which includes other choices of nonlocal attenuation operators $L$. The case $\beta = \betaB$ is of interest in modelling viscoelastic materials \cite{Larsson, EndreZener}. In \cite{xu} a similar system is investigated under trapezoidal disretisation, where the operator $L$ does not contain a time-derivative.

% The well-possedness results can be found in \cite{Barbara} and   

In this work we develop and analyse a numerical method for the time-discretisation of the attenuated Westervelt equation stated above. The time-discretisation of the non-local operator is done by convolution quadrature \cite{Lubich1986,Lubich1988I} whose ability to translate a positivity property of the continuous operator to the discrete case allows a full stability and convergence analysis. It further allows for fast and memory efficient implementation \cite{banjai2019} that is not addressed further in this paper. The full time discretisation is a variation of the discretisation used in \cite{Baker_Banjai} for a related linear model and is based on the trapezoidal (Newmark with $\gamma = 1/2$, $\beta = 1/4$ \cite{hughes_fem}) scheme.

There are several results on the well-posedness and regularity for quasilinear wave equations  and  for the   Westervelt equations, see e.g.~\cite{Doerfler, Barbara_2009, Nikolic_2016}.  
In \cite{Larsson, Saedpanah},  semigroup techniques and the Galerkin method are used to prove  well-posedness results for linear integro-differential equations modelling dynamics of fractional order viscoelasticity. For equations with fractional integrals the semigroup techniques can be applied in the same way as in the case of equations of linear viscoelasticity \cite{Fabiano, Fabrizio}. However similar approach cannot be used to prove existence results for equations with nonlocal {\em differential} operators, which include fractional time derivative as a special case, considered in this work. 
The Galerkin method, together with the fixed point argument, is applied in  \cite{Barbara} for the well-posedness analysis of  fractional Westervelt equations. 
Galerkin approximation, together with the energy estimates, is also used in \cite{EndreZener} to  prove existence of weak solutions to the fractional Zener wave equations for heterogeneous viscoelastic materials. In the proof of existence and uniqueness results for the nonlocally attenuated Westervelt equation considered here we follow similar ideas as in \cite{Barbara}, and hence include only the main steps of the proof. 

The literature on the numerical methods for the case of local strong damping, i.e.,  $L = -\Delta \partial_t$  includes the semi-discretisation by continuous \cite{NikWohl:cG} and discontinuous  \cite{ANTONIETTI2020109484} Galerkin finite element methods.  Let us also mention the recent approach via semi-groups to the analysis of the spatial discretisation of a large class of quasilinear wave equations \cite{HochM:spatial}. Analysis of a fully discrete scheme for nonlinear elastic waves with the finite element method in space and rational approximation in time is presented in \cite{Babis:1993}.  

In the linear case ($k = 0$), the literature also includes the numerical analysis of full discretisations of non-local attenuations. Namely a weaker form of non-local attenuation than we are interested in ($Lv(t) = -\int_0^t \beta(t-s)\Delta v(s)ds$) is investigated in \cite{Larsson} where a continuous Galerkin semi-discretisation is analysed. In the already mentioned work \cite{Baker_Banjai} a fully discrete scheme is investigated with again weaker attenuation $L = \partial_t^{\gamma}$, $\gamma \in (0,1)$. The fully discrete scheme in \cite{Baker_Banjai}  consists of continuous Galerkin method in space and leapfrog combined with convolution quadrature in time. This was extended to the strongly damped nonlocal case (still with $k = 0$) in the thesis \cite[Chapter 6]{Baker2022} with the explicit leapfrog scheme replaced by the implicit trapezoidal time-stepping. This numerical approach we now extend to the nonlinear case to give what we believe to be the first analysis of a time-discretisation of the nonlocally attenuated Westervelt equation. The analysis also includes realistic assumptions on the regularity of the solution including the possible lack of smoothness at $t = 0$. 

The paper consists of six sections, the first being this introduction. In the next section we give the formulation of the mathematical model and prove an important property of $L$. Section 3  briefly gives the well-posedness of the nonlinear system with some of the technical details of the proof relegated to the appendix. In Section 4 we state the numerical scheme and show its stability. This leads to the proof of convergence estimates in Section 5. Finally, the results are illustrated by numerical experiments in one and two spatial dimensions in Section 6.

\section{Formulation of mathematical model}\label{formul_prob}
We start with the formulation of the mathematical model. In the damping term we shall consider a class of  convolution kernels which includes fractional time derivative as a special case. 

Let $\Omega \subset \mathbb R^d$, with $d\leq 3$,  be   a $C^{1,1}$  domain, or for $d=2$ a polygon with edge opening angles $\omega< \pi$  and for $d=3$ a  polyhedron with  $\omega\leq \pi/2$.
We consider
\begin{equation}\label{eq:1}
\partial_t^2 u -\Delta u -a \beta \ast \Delta \partial_t u = k\partial_t^2 (u^2), \end{equation}
where $a,k>0$ are constants,
\[
f \ast g (t) := \int_0^t f(t-\tau) g(\tau) d\tau
\]
denotes the one sided convolution, and $\beta$ is chosen as either
\begin{equation}
  \label{eq:betaA}
\betaA(t) := \frac1{\Gamma(\mu)}t^{\mu-1} e^{-r t},  \qquad r \geq 0, \;  \mu \in (0,1),
\end{equation}
or 
\begin{equation}
  \label{eq:betaB}
\betaB(t) := -\dot e_\mu(t), \qquad e_\mu(t) = E_{\mu,1}(-t^\mu), \qquad \mu \in (0,1),
\end{equation}
with $E_{\mu,\gamma}$  the Mittag-Leffler function \eqref{eq:ML}. In both cases
\[
\beta(t) \sim \frac1{\Gamma(\mu)} t^{\mu-1} \qquad \text{ as } \; \; t \to 0^+.
\]
When a result holds for both kernels, we will use $\beta$ to denote either of the kernels.  Note that for $\beta_A$ and  $r  = 0$, 
\[
\betaA \ast f = I_t^\mu f \quad \text{ and }  \quad \betaA \ast \partial_t f = \partial_t^{1-\mu} f,
\]
where $I_t^\mu$ denotes the Riemann-Liouville fractional integral of order $\mu \in (0,1)$ and $\partial_t^{1-\mu}$ the Caputo derivative of order $1-\mu \in (0,1)$ \cite{podlubny}.

We will need two properties of $\beta$. First of all,  denoting by $\hat \beta := \mathscr{L}\{\beta\}$   the Laplace transform of $\beta$ we have that
\begin{equation}
  \label{eq:hatbeta}  
\hat{\beta}_{\text{A}}(z) = (z+r)^{-\mu}, \qquad \hat{\beta}_{\text{B}}(z) = \frac1{z^\mu+1}.
\end{equation}
The expression for $\hat{\beta}_{\text{B}}(z)$ is obtained from the fact that the Laplace transform of $e_\mu$ is given by $\frac{z^{\mu-1}}{z^\mu+1}$, see \cite{Mainardi_ML}, and the calculation
\[
\mathscr{L}\{-\dot e_\mu\}(z) = -\left(\frac{z^{\mu}}{z^\mu+1}-1\right) = \frac1{z^\mu+1},
\]
where we used that $e_\mu(0) = 1$. 

Thus, a property we will require later, follows:
\begin{equation}
  \label{eq:hatbeta_lower}
\Re \frac1{\hat{\beta}_{\text{A}}(z)}  \geq (\sigma +r)^\mu,\qquad
\Re \frac1{\hat{\beta}_{\text{B}}(z)}  \geq 1 \qquad \forall \Re z \geq \sigma > 0.
\end{equation}
The second property we need is stated as a lemma.

\begin{lemma}\label{lem:time_lower}
For  any $v \in L^2(0,T)$ we have 
\[
\int_0^t [\beta\ast v] (s)\, v(s) ds
\geq \frac12 \min_{s \in [0,t]} \left(\gamma(t-s)+\gamma(s)\right)
\int_0^t \left|\beta\ast v(s)\right|^2 ds, \qquad t \in (0,T),
\]
where for  $\beta = \betaA$ 
\[
\gamma(t) = \frac1{\Gamma(1-\mu)}e^{-r t}t^{-\mu}+ \frac r{\Gamma(1-\mu)}\int_0^t \tau^{-\mu}e^{-r \tau}d\tau 
\]
and for  $\beta = \betaB$
\[
\gamma(t) = \frac1{\Gamma(1-\mu)} t^{-\mu}+1.
\]
\end{lemma}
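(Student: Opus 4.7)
The plan is to first establish the stronger pointwise inequality
\begin{equation*}
\int_0^t w(s)v(s)\, ds \geq \tfrac12 \int_0^t [\gamma(t-s) + \gamma(s)]\, w(s)^2\, ds,
\end{equation*}
where $w := \beta \ast v$. The statement of the lemma then follows immediately by bounding $\gamma(t-s)+\gamma(s)$ from below by its pointwise minimum over $s\in[0,t]$ and pulling it outside the integral.

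The preliminary observation is the convolution identity $(\gamma \ast \beta)(t) = 1$ for $t>0$, which is verified directly from \eqref{eq:hatbeta} and the explicit formulas for $\gamma$: in both cases $\hat\gamma(z) = (z\hat\beta(z))^{-1}$. Convolving this identity with $v$ gives $(\gamma\ast w)(t) = \int_0^t v(s)\,ds$, so $\gamma\ast w$ is absolutely continuous with derivative $v$. Rewriting the convolution as $(\gamma\ast w)(t) = \int_0^t \gamma(u)\,w(t-u)\,du$ and using $w(0)=0$ (which holds since $\beta \in L^1_{\mathrm{loc}}$ and $v\in L^2$) yields the pointwise representation $v(t) = (\gamma \ast \dot w)(t)$ for smooth $v$.

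Next, introduce the deficit
\begin{equation*}
H(t) := \int_0^t w(s)v(s)\, ds - \tfrac12\int_0^t [\gamma(t-s)+\gamma(s)]\, w(s)^2\, ds,
\end{equation*}
so that $H(0)=0$. Differentiating (again rewriting the convolution as $\int_0^t \gamma(u)w^2(t-u)\,du$ so that the boundary term $\gamma(t)w^2(0)=0$ is harmless) and substituting $v=\gamma\ast\dot w$, one obtains
\begin{equation*}
\dot H(t) = \int_0^t \gamma(t-s)\, \dot w(s)\, [w(t)-w(s)]\,ds - \tfrac12\gamma(t)w(t)^2.
\end{equation*}
Using $(w(t)-w(s))\dot w(s) = -\tfrac12\partial_s(w(t)-w(s))^2$ and integrating by parts in $s$ removes the first integral: the boundary term at $s=t$ vanishes because $\gamma(u)(w(t)-w(t-u))^2 \sim u^{2-\mu}\to 0$ as $u\to 0$, while the boundary term at $s=0$ equals $\tfrac12\gamma(t)w(t)^2$ and exactly cancels the last term. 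What remains is
\begin{equation*}
\dot H(t) = -\tfrac12\int_0^t \gamma'(t-s)\, [w(t)-w(s)]^2\, ds \geq 0,
\end{equation*}
because a direct computation gives $\gamma'(u) = -\mu e^{-ru}u^{-\mu-1}/\Gamma(1-\mu) \leq 0$ for $\betaA$ and $\gamma'(u) = -\mu u^{-\mu-1}/\Gamma(1-\mu) < 0$ for $\betaB$. Combined with $H(0) = 0$, this yields $H(t)\geq 0$, proving the intermediate inequality.

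The main obstacle is the non-integrable singularity of $\gamma$ and $\gamma'$ at zero: a naive Leibniz rule produces divergent boundary terms. These are sidestepped by always moving the kernel variable into the ``free'' slot of each convolution, and by checking that a vanishing factor (either $w(0)=0$, or the $O(u)$ behaviour of $w(t)-w(t-u)$) absorbs the singularity in the integration by parts. The required smoothness of $v$ for $v=\gamma\ast\dot w$ is removed at the end by a density argument, since both sides of the inequality are continuous functionals of $v\in L^2(0,T)$.
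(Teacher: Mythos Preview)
Your argument is correct and follows the same overall architecture as the paper: both begin by identifying $\gamma$ via $\hat\gamma(z)\hat\beta(z)=1/z$, so that $v=\frac{d}{dt}(\gamma\ast w)$ with $w=\beta\ast v$, and both reduce the lemma to the intermediate inequality
\[
\int_0^t w(s)\,\frac{d}{ds}(\gamma\ast w)(s)\,ds \geq \tfrac12\int_0^t[\gamma(t-s)+\gamma(s)]\,w(s)^2\,ds.
\]
The difference is only in how this inequality is justified. The paper invokes \cite[Lemma~3.1]{EndreZener} as a black box (that lemma applies to any nonnegative, nonincreasing kernel, which is exactly what you verify for $\gamma$), while you supply a direct proof: you introduce the deficit $H(t)$, compute $\dot H(t)=-\tfrac12\int_0^t\gamma'(t-s)(w(t)-w(s))^2\,ds$ via an integration by parts, and conclude from $\gamma'\le 0$ that $H$ is nondecreasing with $H(0)=0$. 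This is essentially a reproof of the cited lemma specialised to the two kernels at hand, so your route is more self-contained but otherwise equivalent. Your handling of the singular boundary terms (using $w(0)=0$ and $w(t)-w(t-u)=O(u)$ to absorb the $u^{-\mu}$ and $u^{-\mu-1}$ singularities) and the final density step to reach general $v\in L^2(0,T)$ match the level of care in the paper.
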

\begin{proof}
%First of all note that since
%\[
%\hat \gamma(z) =  (z+r)^{\mu-1}+r \frac{(z+r)^{\mu-1}}{z}= \frac{(z+r)^{\mu}}{z}
%= \frac1{z\hat\beta(z)}
%\]
Note that $\gamma$ is chosen so that $\frac1z = \hat \gamma(z) \hat \beta(z)$ and hence
\begin{equation}
  \label{eq:vgamma}
\int_0^t v(s)ds =   \int_0^t \gamma(t-\tau) \int_0^\tau \beta(\tau-\eta) v(\eta) d\eta d\tau,
\end{equation}
for any sufficiently smooth $v$.

Denoting $w = \beta \ast  v$, we have, by differentiating \eqref{eq:vgamma}, multiplying by $w$ and integrating, that
\[
\int_0^t w(s) v(s) ds = 
\int_0^t w(s)\frac{d}{ds} \int_0^s \gamma(s-\tau)w(\tau)d\tau ds.
\]
We complete the proof by noticing  that $\gamma$ satisfies the conditions of the kernel $k$ in \cite[Lemma~3.1]{EndreZener} with the lemma thus implying
\[
  \begin{split}    
\int_0^t \frac{\df}{\df s}\left[\int_0^s \gamma(s-\tau) w(\tau) \df\tau \right]w(s) \df s
&\geq \frac12 \int_0^t \left[\gamma(t-s)+\gamma(s)\right] |w(s)|^2 \df s\\
&\geq \frac12 \min_{s \in [0,t]} \left[\gamma(t-s)+\gamma(s)\right] \int_0^t |w(s)|^2 \df s.
  \end{split}
\]
Finally note that since $\beta \in L^1(0,T)$, Young's inequality for convolutions implies that both sides of the above inequality are well-defined for $v \in L^2(0,T)$ thus completing the proof.
\end{proof}

\begin{remark}
  An application  of Plancherel's formula as in \cite[Lemma 2.2]{BaLuSa:2015},  shows that from \eqref{eq:hatbeta_lower} it follows that
\[
\int_0^\infty e^{-2\sigma s} [\beta \ast v](s)  v(s) ds \geq C(\sigma) \int_0^\infty e^{-2\sigma s} |\beta \ast v(s)|^2 ds
\]
for $\sigma > 0$ and $C(\sigma) = (\sigma+r)^\mu$ for $\beta = \betaA$ and $C(\sigma) = 1$ for $\beta = \betaB$. While it would be possible to develop the theory in the next section based on this inequality, it is easier to use Lemma~\ref{lem:time_lower}.
\end{remark}

We rewrite equation \eqref{eq:1} as
\begin{equation}\label{main_integro}
\begin{aligned}
  (1-2ku) \partial_t^2 u -\Delta u - a \beta \ast \partial_t \Delta u  & = 2k (\partial_t u)^2  && \text{in } (0,T) \times \Omega, \\
 u& =u_D  && \text{on } (0,T)\times \partial \Omega, \\ 
 u(0)=u_0, \quad \partial_t u(0)& = v_0  && \text{in } \Omega
 \end{aligned}
\end{equation}
and make the following assumptions on the smoothness of the data:
\begin{equation} \label{init_cond_assum}
\begin{aligned} 
 & u_0-u_D(0) \in \dot{H}^3(\Omega) \cap H^1_0(\Omega), \quad v_0 \in H^2(\Omega)\cap H^1_0(\Omega), \quad \\
 & u_D \in H^2(0,T; H^2(\Omega))\cap L^\infty(0,T; H^3(\Omega)),
 \end{aligned}
\end{equation}
 where $\dot H^3 = \mathcal D ((-\Delta_D)^{3/2} u)$ with the norm $\|w\|_{\dot H^3} = \|(-\Delta_D)^{3/2} w \|_{L^2}$ and $\Delta_D$ is the Laplace operator in $L^2(\Omega)$ with the zero Dirichlet boundary conditions, i.e.~with domain $\mathcal D(\Delta_D) = H^2(\Omega) \cap H^1_0(\Omega)$.

\begin{definition}
 A weak solution of \eqref{main_integro} is function $u \in u_D +  H^1(0,T; H^1_0(\Omega))$, with $u \in L^\infty((0,T)\times \Omega)$ and  $\partial_t^2 u \in  L^2((0,T)\times \Omega)$, satisfying
 \begin{equation}\label{weak_sol_def}
 \begin{aligned}
  \int_0^T \big[\langle (1- 2ku)  \partial_t^2 u,  \phi \rangle  +  \langle \nabla u + a \beta \ast \partial_t \nabla u, \nabla \phi \rangle\big] dt
  = \int_0^T \langle 2k (\partial_t u)^2, \phi \rangle dt,
  \end{aligned}
 \end{equation}
for $\phi \in L^2(0, T; H^1_0(\Omega))$, and initial conditions are satisfied in the $L^2$-sense.
\end{definition}

For the simplification of the presentation,  we shall consider $u_D=0$, however all results hold for non-zero Dirichlet boundary conditions by considering $\hat u = u - u_D$, resulting in
\begin{equation}\label{eq_with_u_D}
\begin{aligned}
  ((1- 2k u_D)-2k\hat u) \partial_t^2 \hat u -\Delta \hat u - a \beta \ast \partial_t \Delta \hat u   = 2k (\partial_t \hat u)^2 +  f(t,x)\\
  + 4k \partial_t \hat u \partial_t u_D + 2k\hat u \partial_t^2 u_D,   
  \end{aligned}
  \end{equation}
where $f(t,x)= 2k (\partial_t u_D)^2 + (2k u_D-1) \partial_t^2 u_D + \Delta u_D + a \beta \ast \partial_t \Delta u_D$. For $u_D$ independent of $t$, the difference between \eqref{main_integro} and \eqref{eq_with_u_D} is in the presence of function $f(t,x)$, which is regular for regular $u_D$ and the analysis below holds for all sufficiently regular $u_D$ with $\|u_D\|_{L^\infty(\Omega)} < 1/(2k)$.  In case $u_D$ depends on $t$ we obtain additional linear terms, which can be treated in the same way as in the case $u_D=0$.  

%\todo{But looking at $\tilde  u = u - u_D$ introduces a RHS - yes, but if it is regular it does not disturb, or I am misisng something ?}

\section{Existence and uniqueness results}\label{sec:nonlinear} 

We shall apply the Banach fixed-point theorem and the Galerkin method to show existence and uniqueness  of solutions of \eqref{main_integro}. Similar approach was considered  in~\cite{Barbara}, however for completeness we present here the short outline of the main ideas. Also we have a more general convolution kernel, compared to the one considered in~\cite{Barbara}.

% In the proofs below we shall use that for  $w \in H^1_0(\Omega) \cap H^2(\Omega)$, and ${\rm dim}(\Omega) \leq 3$,  we have the following estimate
% \begin{equation}\label{H2_Delta}
% \| w \|_{H^2(\Omega)} \leq C_\Omega \|\Delta w\|_{L^2(\Omega)},
% \end{equation}for a convex domain $\Omega$ and , see e.g.~\cite[Theorem~3.1.2.1, Remark~3.1.2.2]{Grisvard}, and
 For $\Omega \in C^{1,1}$ the elliptic regularity theory, see e.g.~\cite[Theorem 9.15, Lemma 9.17]{GT},  ensures
\begin{equation} \label{H2_Delta}
\| w \|_{W^{2, p}(\Omega)} \leq C_\Omega \|\Delta w\|_{L^p(\Omega)},
\end{equation}
for $w \in H^1_0(\Omega)$ with $\Delta w \in L^p(\Omega)$ and $p \in (1, \infty)$, and some positive constant $C_\Omega$, depending on the domain $\Omega$.
For polygons estimate \eqref{H2_Delta} holds  for  $1<p < 2 \omega/(2\omega - \pi)$, see e.g.~\cite[Theorem~4.3.2.4, Remark~4.3.2.5]{Grisvard}.
For  polyhedral domains we have estimate \eqref{H2_Delta} for $p=2$ and convex domains or for $p\geq 6/5$, with $p \neq 2$ and satisfying
\begin{equation} \label{condit_p}
2- 2/p < \pi/\omega, \quad 2- 3/p < \lambda,
\end{equation}
where  $\lambda =\min \{ -1/2 + \sqrt{\lambda_1 + 1/4}, 2\}$,   with $\lambda_1$  the smallest positive eigenvalue  of the Laplace-Beltrami operator on the spherical caps spanning the corners; see e.g.~\cite[Theorem~3.2, Corollary~3.7]{Dauge}. For polyhedra with $\omega \leq \pi/2$  conditions \eqref{condit_p} are satisfied for any $3< p< \infty$,~\cite[Corollary~3.12, Corollary~3.13]{Dauge}.

Thus, the assumptions we made on $\Omega$ ensure that  \eqref{H2_Delta} is satisfied for $p>d$.
We shall also use the Sobolev embeddings, combined with \eqref{H2_Delta} for $p=2$,
\begin{equation}\label{embeding_1}
\begin{aligned}
&\| w\|_{L^\infty(\Omega)} \leq C_{\Omega} \|w \|_{H^2(\Omega)}  \leq C_\Omega \| \Delta w \|_{L^2(\Omega)},
\\
&\|\nabla  w\|_{L^4(\Omega)} \leq C_{\Omega} \|w \|_{H^2(\Omega)}\leq C_\Omega \| \Delta w \|_{L^2(\Omega)},
\end{aligned}
\end{equation}
where by $C_\Omega$ we denote the generic constant in the embedding inequalities, and
\begin{equation}\label{Lipschitz_regul}
 \begin{aligned}
  \|\nabla u \|_{L^\infty} \leq C_\Omega\| u\|_{W^{2,p}}
  \leq C_\Omega \|\Delta u\|_{L^p} \leq C_\Omega \|\Delta u\|_{H^1},
 \end{aligned}
\end{equation}
for $d<p \leq 6$. The first and the last inequalities in  \eqref{Lipschitz_regul} follow from the Sobolev embeddings, whereas  the second inequality is  ensured by \eqref{H2_Delta}.

By $C$ we shall denote a generic constant that is allowed to change from line to line. For  shortness of notation we denote $\| \cdot\|_{L^p(\Omega)}$ by $\|\cdot \|_{L^p}$ and $\| \cdot\|_{H^k(\Omega)}$ by $\|\cdot \|_{H^k}$, with $2\leq p\leq \infty$ and $k=1,2,3$, and 
the $L^2$-inner product is denoted by~$\langle \cdot, \cdot \rangle$.  The semi-norm $\|\nabla \cdot\|_{L^2}$ is denoted by $|\cdot|_{H^1}$.

Consider  
$$
\begin{aligned} 
\mathcal K  = \Big\{& u\in  L^\infty(0,T; H^2(\Omega)) \cap W^{1, \infty}(0,T; H^1_0(\Omega)) \, : \, \\  
& u \in L^\infty(0,T; \dot{H}^3(\Omega)),
\partial_t u \in L^\infty(0,T; H^2(\Omega)), {\partial_t^2 u \in L^2(0,T; H^1(\Omega))},
\\
& \|\Delta u\|_{L^\infty(0,T; L^2(\Omega))} \leq b, \;\;  \|\nabla\Delta u\|_{L^\infty(0,T; L^2(\Omega))}^2 + \kappa\|\Delta\partial_t u\|^2_{L^\infty(0,T; L^2(\Omega))}\leq R^2 \Big\},
\end{aligned} 
 $$
 for some fixed $0<C_\Omega b\leq (1-\kappa)/2k$, with $0<\kappa<1$ and  $C_\Omega$ being the constant in the embedding inequality of $H^2(\Omega)$ in $L^\infty(\Omega)$, and $R^2= C_R \big[ (1+2k C_\Omega b) \|\Delta v_0\|^2_{L^2} + \|\nabla\Delta u_0\|^2_{L^2}\big]$ for some constant $C_R>1$.
 
 The map $\mathcal T: \tilde u \mapsto u =  \mathcal T(\tilde u)$, for  $\tilde u \in \mathcal K$, is defined via the solution of the following linear problem
\begin{equation}\label{main_integro_lin}
\begin{aligned}
  (1-2k\tilde u) \partial_t^2 u -\Delta u - a \beta \ast \partial_t \Delta u  & = 2k \partial_t u \partial_t \tilde u  && \text{in } (0,T) \times \Omega, \\
 u& = 0  && \text{on } (0,T)\times \partial \Omega, \\ 
 u(0)=u_0, \quad \partial_t u(0)& = v_0  && \text{in } \Omega.
 \end{aligned}
\end{equation}
First we show the existence of a unique solution of \eqref{main_integro_lin}. Then by showing that the map $\mathcal T$, for some $T>0$, is a contraction we obtain the existence of a unique solution of \eqref{main_integro}. 

\begin{theorem} \label{existence_linear}
 For $u_0 \in \dot{H}^3(\Omega)$, $v_0 \in H^2(\Omega)\cap H^1_0(\Omega)$ and $\tilde u \in \mathcal K$  there exists a unique solution $u \in L^\infty(0,T;  H^1_0(\Omega))$ of \eqref{main_integro_lin}, with   $u \in L^\infty(0,T; \dot{H}^3(\Omega))$,  $\partial_t u \in L^\infty(0,T; H^2(\Omega))$ and {$\partial_t^2 u \in L^2(0,T; H^1(\Omega))$}.
\end{theorem}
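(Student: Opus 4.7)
My plan is to construct $u$ by a Galerkin approximation in the Dirichlet-Laplacian eigenbasis $\{\phi_k\}$ on $\Omega$, derive \emph{a priori} energy estimates at three regularity levels, and pass to the limit. Because $\tilde u\in\mathcal K$ gives $\|2k\tilde u\|_{L^\infty((0,T)\times\Omega)}\le 2kC_\Omega r\le 1-\kappa$, the coefficient $1-2k\tilde u$ is bounded below by $\kappa$ and above uniformly, so the ansatz $u_n(t)=\sum_{k=1}^n a_k(t)\phi_k$, with initial data obtained by projecting $u_0,v_0$ onto $\mathrm{span}\{\phi_1,\dots,\phi_n\}$, reduces the problem to a linear Volterra integro-differential system in $a(t)$ with a uniformly positive definite time-dependent mass matrix $M_{jk}(t)=\int_\Omega(1-2k\tilde u)\phi_j\phi_k\,\df x$ and an $L^1$-kernel from $\beta\ast\cdot$. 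Standard Picard iteration yields local existence of $u_n$, and the estimates below promote this to global existence on $[0,T]$.

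For the base energy estimate I would test with $\partial_t u_n$. Integration by parts in space and commuting $\Delta$ with the time convolution rewrites the damping as $a\int_\Omega\nabla(\beta\ast\partial_t u_n)\cdot\nabla\partial_t u_n\,\df x$; applying Lemma~\ref{lem:time_lower} componentwise to $\nabla\partial_t u_n$ at fixed $x$ and integrating over $\Omega$ contributes a nonnegative quantity together with $L^2((0,t);L^2(\Omega))$ control of $\nabla(\beta\ast\partial_t u_n)$. The inertial term becomes $\tfrac12\frac{\df}{\df t}\int_\Omega(1-2k\tilde u)|\partial_t u_n|^2\df x + k\int_\Omega\partial_t\tilde u\,|\partial_t u_n|^2\df x$; combined with the forcing $2k\int_\Omega\partial_t\tilde u\,|\partial_t u_n|^2\df x$ and using $\partial_t\tilde u\in L^\infty(0,T;H^2)\hookrightarrow L^\infty(0,T;L^\infty)$, Gr\"onwall closes the estimate and produces bounds on $\|\partial_t u_n\|_{L^\infty(0,T;L^2)}$ and $\|\nabla u_n\|_{L^\infty(0,T;L^2)}$.

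To reach the stated regularity I would iterate at higher levels. Testing with $-\Delta\partial_t u_n$ (admissible because $\Delta$ acts diagonally in the eigenbasis) produces, after integration by parts, $\tfrac12\frac{\df}{\df t}\int_\Omega(1-2k\tilde u)|\nabla\partial_t u_n|^2\df x$ plus cross terms of the form $\int_\Omega\partial_t^2 u_n\,\nabla\tilde u\cdot\nabla\partial_t u_n\,\df x$ that are controlled via \eqref{embeding_1} and the defining bounds of $\mathcal K$, while Lemma~\ref{lem:time_lower} again handles the damping and gives $L^2$ control of $\Delta(\beta\ast\partial_t u_n)$. This level delivers $\|\partial_t u_n\|_{L^\infty(0,T;H^1)}+\|\Delta u_n\|_{L^\infty(0,T;L^2)}$, and hence $\|u_n\|_{L^\infty(0,T;H^2)}$ through \eqref{H2_Delta}. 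One further test with $\Delta^2\partial_t u_n$ (or, equivalently, applying $-\Delta$ to the Galerkin equation and re-testing at the previous level) then yields $\|\partial_t u_n\|_{L^\infty(0,T;H^2)}$ and $\|u_n\|_{L^\infty(0,T;H^3)}$.

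The principal obstacle I expect is the bookkeeping of commutators generated each time $\Delta$ is moved across the variable coefficient $1-2k\tilde u$: at the top level, products $\nabla^\alpha\tilde u\cdot\nabla^\beta\partial_t^j u_n$ with $|\alpha|+|\beta|\le 3$ must each be estimated in $L^2(\Omega)$ using the Sobolev embeddings in \eqref{embeding_1}, the bounds $\|\tilde u\|_{L^\infty(0,T;H^3)}\le R$ and $\|\partial_t\tilde u\|_{L^\infty(0,T;H^2)}\le R$, and a Gr\"onwall closure that does not spoil the $\kappa$-coercivity of the leading term. Once the estimates are in hand, weak-$\ast$ compactness extracts a limit $u$ in the required spaces; the convolution passes to the limit because $\beta\in L^1(0,T)$ and the remaining terms are linear in $u$ with coefficients fixed by $\tilde u$. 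Uniqueness follows from the base-level energy estimate applied to the difference of two solutions with vanishing initial data.
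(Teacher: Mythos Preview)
Your approach is the paper's: Galerkin approximation in the Dirichlet eigenbasis, three levels of energy testing (with $\partial_t u_n$, $-\Delta\partial_t u_n$, and $\Delta\partial_t u_n$ after applying $\Delta$ to the equation), Lemma~\ref{lem:time_lower} for the damping term, and weak-$\ast$ compactness for the limit. For the local-in-time solvability of the Galerkin system the paper is a bit more explicit than your ``standard Picard iteration'': it rewrites the second-order system as a Volterra integral equation of the second kind for $\mathbf v^\ell=\ddot{\mathbf c}^\ell$ with a continuous kernel (using that $\beta_1=\mathscr L^{-1}\{z^{-1}\hat\beta(z)\}$ behaves like $t^\mu$ near $0$) and then quotes a classical existence result. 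Your Picard claim is fine since $\beta\in L^1(0,T)$ and the coefficient matrices are continuous in $t$, but the paper's route sidesteps any contraction bookkeeping.

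There is one genuine gap in your sketch. The commutator term $\int_\Omega\partial_t^2 u_n\,\nabla\tilde u\cdot\nabla\partial_t u_n\,\df x$ at the second level (and its counterparts at the top level) cannot be controlled merely ``via \eqref{embeding_1} and the defining bounds of $\mathcal K$'': those give you $\|\nabla\tilde u\|_{L^\infty}$, but $\partial_t^2 u_n$ is the \emph{unknown} and you have no $L^2$ bound on it yet at that stage. The paper's fix (carried out in the appendix and in Lemma~\ref{lem_apriori}) is to substitute the equation itself,
\[
\partial_t^2 u_n=\frac{1}{1-2k\tilde u}\bigl(\Delta u_n+a\,\beta\ast\partial_t\Delta u_n+2k\,\partial_t\tilde u\,\partial_t u_n\bigr),
\]
so that the dangerous factor is replaced by quantities already present on the left-hand side: the $\beta\ast\partial_t\Delta u_n$ piece is absorbed into the nonnegative damping contribution from Lemma~\ref{lem:time_lower} after choosing the Young-inequality parameter small, while $\Delta u_n$ and $\partial_t u_n$ feed into Gr\"onwall. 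The same substitution, now also for $\partial_t^2\nabla u_n$, is needed at the $H^3$ level. Once you insert this step your argument closes and coincides with the paper's.
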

\begin{proof} 
The existence of a unique solution of \eqref{main_integro_lin} can be shown using the Galerkin approximation
$$
u^\ell(t,x)= \sum_{j=1}^\ell c_j^\ell(t) q_j(x), 
$$
where $\{q_j\}_{j\in \mathbb N}$ is a basis of eigenfunctions of $-\Delta$ on $H^1_0(\Omega)$, orthonormal in $L^2$ and orthogonal in $H^1$, with eigenvalues $\{\lambda_j\}$. The coefficient vector  $\bc^\ell = (c_j^\ell)_{j=1}^\ell$ satisfies the following system of ODEs
\begin{equation}\label{ODEs_1}
\frac{d^2}{dt^2} \bc^\ell + \Lambda(t) \bc^\ell + a \Lambda(t) \beta\ast \frac{d}{dt} \bc^\ell-A(t)\frac{d}{dt} \bc^\ell = 0,
\end{equation}
where 
\[
(\Lambda(t))_{ij} = \lambda_j \left\langle \frac1{1-2k\tilde u(\cdot,t)}q_j(\cdot),q_i(\cdot)\right\rangle, \quad
\left(A(t)\right)_{ij} = \left\langle \frac{2k\partial_t \tilde u(\cdot,t)}{1-2k\tilde u(\cdot,t)}q_j(\cdot),q_i(\cdot)\right\rangle.
\]
Writing $\bv^\ell = \frac{d^2}{dt^2} \bc^\ell$ we have that
\begin{equation}\label{eq:c_from_v}
\bc^{\ell}(t) = \int_0^t(t-s)\bv^{\ell}(s)ds + t\frac{d}{dt} \bc^{\ell}(0)+\bc^{\ell}(0)
\end{equation}
and
\[
\frac{d}{dt}\bc^{\ell}(t) = \int_0^t\bv^{\ell}(s)ds + \frac{d}{dt} \bc^{\ell}(0),
\]
with the initial data $\bc^{\ell}(0)$ and $\frac{d}{dt} \bc^{\ell}(0)$ given by the orthogonal projections   onto the basis $\{q_j\}$ of the initial data $u_0$ and $v_0$ respectively.
Thus the original problem \eqref{ODEs_1} is  transformed  to the Volterra integral equation
\[
\bv^\ell(t) + \Lambda(t) \int_0^t (t-s)\bv^\ell(s) ds + a \Lambda(t) \beta_1 \ast \bv^\ell(t)-A(t)\int_0^t\bv^\ell(s)ds = g(t),
\]
where $\beta_1$ is the inverse Laplace transform of
\[
\hat\beta_1(z) := z^{-1} \hat \beta(z)
\]
and
\[
g(t) := -\Lambda(t)\bc^{\ell}(0)+A(t)\frac{d}{dt} \bc^{\ell}(0)-\left(t+a\int_0^t\beta(s)ds\right)\Lambda(t)\frac{d}{dt} \bc^{\ell}(0).
\]
Thus the Volterra integral equation can be written as
\[
\bv^\ell(t) +  \int_0^t K(t,s)\bv^\ell(s) ds = g(t)
\]
with
\[
K(t,s) = \Lambda(t)(t-s+a\beta_1(t-s))-A(t).
\]
From the behaviour of its Laplace transform, we know that $\beta_1$ is analytic for $t >0$ with a singularity of the type $t^\mu$ at $t = 0$, thus the kernel $K(t,s)$ is continuous and so is the right-hand side $g$. The existence of a unique continuous solution follows from \cite[Theorem 2.1.7]{Brunner:2004}. The $C^2[0,T]$-solution $\bc^{\ell}$ of the original problem \eqref{ODEs_1} is then obtained from $\bv^\ell$ and \eqref{eq:c_from_v}.

%Due to assumption on $\tilde u$ we have that matrix $M$ in \eqref{ODEs_1} is invertible (non-singular). Then using \cite{} ensures existence of solutions of \eqref{ODEs_1}. 

The existence of a solution of  \eqref{main_integro_lin} is obtained by taking the limit as $\ell\to \infty$ in the Galerkin approximation and using a priori estimates, uniformly in $\ell$, similar to the ones in Lemma~\ref{lem_apriori}. 
\end{proof} 

\begin{lemma} \label{lem_apriori}
 For solution of \eqref{main_integro_lin} we have the following a priori estimates
 \begin{equation}\label{estim_apriori}
 \begin{aligned} 
 & \|\Delta u\|^2_{L^\infty(0,T; L^2(\Omega))} \hspace{-0.1 cm} + \kappa \|\partial_t \nabla u \|^2_{L^\infty(0,T; L^2(\Omega))} 
 \hspace{-0.1 cm} \leq \big[\|\Delta u_0 \|^2_{L^2(\Omega)} + \xi \|\nabla v_0\|^2_{L^2(\Omega)} \big] \times \\
 & \qquad \times \exp\Big\{ T \frac {C_\Omega}\kappa \Big[\|\partial_t \tilde u\|_{L^\infty(\Omega_T)} +  \|\partial_t \Delta \tilde u \|_{L^\infty(0,T; L^2(\Omega))}\\
&\qquad \qquad \quad  + \frac {1}{\kappa}  \|\nabla \tilde u\|_{L^\infty(\Omega_T)}\big( \|\nabla \tilde u\|_{L^\infty(\Omega_T)} + 
 \|\partial_t \nabla\tilde u \|_{L^\infty(0,T;L^2(\Omega))} \big)\Big]\Big\},\\
 & \|\nabla \Delta u \|^2_{L^\infty(0,T;L^2(\Omega))} + \kappa \|\partial_t \Delta u \|^2_{L^\infty(0,T;L^2(\Omega))} \leq \big[\|\nabla \Delta u_0\|^2_{L^2(\Omega)} \\
 &  + \xi \|\Delta v_0\|^2_{L^2(\Omega)} \big]\exp\Big\{T \frac{C_\Omega }\kappa \Big[
\|\partial_t \Delta\tilde u \|_{L^\infty(0,T; L^2(\Omega))} +
\frac 1{\kappa}  \big[\|\Delta \nabla\tilde u \|_{L^\infty(0,T; L^2(\Omega))} \\
& \quad \quad  + \|\nabla \tilde u \|_{L^\infty(\Omega_T)}(1+ \| \Delta \tilde u \|_{L^\infty(0,T; L^2(\Omega))})\big] \big(1 + \|\partial_t\nabla \tilde u \|_{L^\infty(0,T; L^2(\Omega))} \big) \\
&\qquad \qquad  + \frac {1}{\kappa^2} \|\Delta \nabla\tilde u \|_{L^\infty(0,T; L^2(\Omega))}^2 \big(1+ \|\Delta \tilde u \|^2_{L^\infty(0,T; L^2(\Omega))}\big)
   \Big]\Big \}, 
   \end{aligned}
\end{equation}
together with 
\begin{equation}\label{estim_apriori_1}
 \begin{aligned}
   \|\beta \ast \Delta \partial_t u \|^2_{L^2(\Omega_T)}
  \leq  \big[\|\Delta u_0 \|^2_{L^2(\Omega)} + \xi \|\nabla v_0\|^2_{L^2(\Omega)} \big] \Big(T\frac{ C_\Omega}{\kappa} \Big[\frac 1{ \kappa} \|\nabla \tilde u \|_{L^\infty(\Omega_T)} \quad
   \\
    + \|\partial_t \Delta \tilde u \|_{L^\infty(0,T; L^2(\Omega))} \Big] \Big[ 1 + \frac 1\kappa \|\nabla \tilde u\|_{L^\infty(\Omega_T)} \Big]  \exp\Big\{ T\frac{ C_\Omega}{\kappa}  \Big[\frac 1{ \kappa} \|\nabla \tilde u \|_{L^\infty(\Omega_T)}\\
    + \|\partial_t \Delta \tilde u \|_{L^\infty(0,T; L^2(\Omega))} \Big] \Big[ 1 + \frac 1\kappa \|\nabla \tilde u\|_{L^\infty(\Omega_T)} \Big]\Big\}+ \frac 1 a\Big),
 \end{aligned}
\end{equation}
and 
\begin{equation*}%\label{estim_apriori_2}
 \begin{aligned}
&\kappa^2 \|\partial_t^2 \nabla  u \|^2_{L^2(\Omega_T)}+  \|\beta \ast \Delta \nabla \partial_t u \|^2_{L^2(\Omega_T)}
\leq
 \big[\|\nabla \Delta u_0\|^2_{L^2(\Omega)} + \xi \|\Delta v_0\|^2_{L^2(\Omega)} \big] \times \\
 &  \times \Big[T C_\Omega \Big( 1+ \frac 1\kappa \Big[ \|\partial_t \Delta\tilde u \|_{L^\infty(0,T; L^2(\Omega))}  +
\frac 1{\kappa}  \big[ \|\nabla \tilde u \|_{L^\infty(\Omega_T)}(1+ \| \Delta \tilde u \|_{L^\infty(0,T; L^2(\Omega))})\\
&  + \|\Delta \nabla\tilde u \|_{L^\infty(0,T; L^2(\Omega))}\big]  \big[1  + \|\partial_t\nabla \tilde u \|_{L^\infty(0,T; L^2(\Omega))} \big]   + \frac {1}{\kappa^2} \|\Delta \nabla\tilde u \|_{L^\infty(0,T; L^2(\Omega))}^2 \big[1 \\
&  + \|\Delta \tilde u \|^2_{L^\infty(0,T; L^2(\Omega))}\big]
   \Big] \Big)   \exp\Big\{T \frac{C_\Omega }\kappa \Big[
\|\partial_t \Delta\tilde u \|_{L^\infty(0,T; L^2(\Omega))} +
\frac 1{\kappa}  \big[\|\Delta \nabla\tilde u \|_{L^\infty(0,T; L^2(\Omega))} \\
& \qquad \quad  + \|\nabla \tilde u \|_{L^\infty(\Omega_T)}\big(1+ \| \Delta \tilde u \|_{L^\infty(0,T; L^2(\Omega))}\big)\big] \big(1 + \|\partial_t\nabla \tilde u \|_{L^\infty(0,T; L^2(\Omega))} \big) \\
&\qquad \qquad  + \frac {1}{\kappa^2} \|\Delta \nabla\tilde u \|_{L^\infty(0,T; L^2(\Omega))}^2 \big(1+ \|\Delta \tilde u \|^2_{L^\infty(0,T; L^2(\Omega))}\big) 
   \Big]\Big \}+\frac 1 a \Big],
 \end{aligned}
\end{equation*}
where $\xi = 1+ 2k C_\Omega b$,  $\tilde u \in \mathcal{K}$, $b$ and $\kappa$ as in the definition of $\mathcal{K}$, $\Omega_T = (0,T)\times \Omega$,  and  the constant $C_\Omega>0$  includes constants from the embedding inequalities and hence depends on the domain $\Omega$.
\end{lemma}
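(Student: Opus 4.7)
The plan is to test equation \eqref{main_integro_lin} against carefully chosen multipliers, apply Lemma~\ref{lem:time_lower} to extract coercivity from the convolution term, and conclude via Gronwall's inequality. A standing observation is that $\tilde u \in \mathcal K$ combined with the Sobolev embedding in \eqref{embeding_1} gives $\|\tilde u\|_{L^\infty(\Omega_T)} \leq C_\Omega r \leq (1-\kappa)/(2k)$, so that the coefficient $1-2k\tilde u$ is bounded below by $\kappa > 0$ uniformly in $(t,x)$; this is what makes the leading quadratic form $\int (1-2k\tilde u)|\nabla \partial_t u|^2 dx$ coercive.

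For the first inequality of \eqref{estim_apriori} together with \eqref{estim_apriori_1}, I would test \eqref{main_integro_lin} with $-\Delta \partial_t u$ and integrate over $\Omega$. Integration by parts (using $\partial_t u|_{\partial\Omega}=0$) transforms
\[
\int_\Omega (1-2k\tilde u)\partial_t^2 u \,(-\Delta \partial_t u)\,dx
\]
into $\tfrac12 \tfrac{d}{dt}\int_\Omega (1-2k\tilde u)|\nabla \partial_t u|^2 dx$ plus the corrections $\int_\Omega k(\partial_t \tilde u)|\nabla \partial_t u|^2 dx$ and $-\int_\Omega 2k(\nabla \tilde u)\partial_t^2 u\cdot\nabla \partial_t u\,dx$, while the principal term $-\Delta u\cdot(-\Delta \partial_t u)$ gives $\tfrac12\tfrac{d}{dt}\|\Delta u\|^2_{L^2}$. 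For the convolution term one applies Lemma~\ref{lem:time_lower} pointwise in $x$ with $v=\Delta\partial_t u$ and integrates in $x$, producing the non-negative contribution $a\,C(T)\int_0^t\|\beta\ast\Delta\partial_t u\|^2_{L^2}ds$ on the left. The right-hand side $2k\partial_t u\,\partial_t\tilde u$ tested against $-\Delta \partial_t u$ and the above correction terms are controlled by Cauchy--Schwarz and Young's inequalities, with the $L^\infty$-norms of derivatives of $\tilde u$ bounded via \eqref{embeding_1}. The troublesome factor $\partial_t^2 u$ appearing in $\int 2k(\nabla\tilde u)\partial_t^2 u\cdot\nabla\partial_t u\,dx$ is eliminated by substituting from the equation itself,
\[
\partial_t^2 u = (1-2k\tilde u)^{-1}\bigl[\Delta u + a\,\beta\ast\partial_t\Delta u + 2k\,\partial_t u\,\partial_t\tilde u\bigr],
\]
so that this term reduces to quantities of the form $\|\nabla\partial_t u\|_{L^2}\|\Delta u\|_{L^2}$ and $\|\nabla\partial_t u\|_{L^2}\|\beta\ast\Delta\partial_t u\|_{L^2}$, which are absorbed partly into the coercive convolution contribution and partly into the energy via Gronwall. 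This gives the first bound in \eqref{estim_apriori}, while keeping the convolution term as the dominant quantity on the left yields \eqref{estim_apriori_1}.

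For the $H^3$-estimate and the final inequality, the same strategy is repeated one derivative higher: apply $\nabla$ to \eqref{main_integro_lin} and test with $-\Delta \partial_t \nabla u$. The leading part generates $\tfrac12\tfrac{d}{dt}\int(1-2k\tilde u)|\nabla\partial_t\nabla u|^2 dx$ and $\tfrac12\tfrac{d}{dt}\|\Delta\nabla u\|^2_{L^2}$, while Lemma~\ref{lem:time_lower} with $v=\Delta\nabla\partial_t u$ contributes $a\,C(T)\int_0^t\|\beta\ast\Delta\nabla\partial_t u\|^2_{L^2}ds$. Differentiating the coefficient $1-2k\tilde u$ introduces the new bilinear terms $2k(\nabla\tilde u)\partial_t^2 u$ on the left and $2k\nabla(\partial_t u\,\partial_t\tilde u)$ on the right, together with further cross terms involving $\Delta\tilde u$; these are handled by \eqref{embeding_1} (in particular $\|\nabla\tilde u\|_{L^4}$, $\|\Delta\nabla\tilde u\|_{L^2}$ and $\|\tilde u\|_{L^\infty}\leq C_\Omega r$), with $\partial_t^2 u$ again eliminated through the equation. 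A further Gronwall argument then closes the two remaining inequalities simultaneously.

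The main obstacle is organisational rather than conceptual: in the higher-order estimate one must track the many cross terms generated by differentiation of $1-2k\tilde u$, and, because $\partial_t^2 u$ is replaced via the equation whenever it appears, the $\beta\ast$-norms in \eqref{estim_apriori_1} and the fourth inequality must be closed simultaneously with the pointwise energy during Gronwall rather than estimated post hoc. The condition $\|\Delta\tilde u\|_{L^\infty(0,T;L^2)}\leq r$ plays a dual role: ensuring $1-2k\tilde u\geq\kappa>0$ via Sobolev embedding, and uniformly bounding the nonlinear error terms so that the exponential prefactors appearing in \eqref{estim_apriori} are indeed finite on $[0,T]$.
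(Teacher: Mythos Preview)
Your strategy is essentially the one followed in the paper: test \eqref{main_integro_lin} with $-\Delta\partial_t u$, use Lemma~\ref{lem:time_lower} for coercivity of the convolution term, substitute for $\partial_t^2 u$ via the equation itself wherever it arises from differentiating the coefficient $1-2k\tilde u$, and close by Gronwall. For the first estimate in \eqref{estim_apriori} and for \eqref{estim_apriori_1} your sketch matches the paper's proof (and its appendix computations) line by line.

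The one genuine difference is at the $H^3$ level. You propose to apply $\nabla$ to the equation and test with $-\Delta\partial_t\nabla u$, which would produce the energy $\int(1-2k\tilde u)|\nabla\partial_t\nabla u|^2\,dx$. The paper instead applies $\Delta$ and tests with $\Delta\partial_t u$, obtaining $\int(1-2k\tilde u)|\partial_t\Delta u|^2\,dx$. The two energies are equivalent via elliptic regularity, but the paper's choice is cleaner for a specific reason: the a~priori estimates are derived at the Galerkin level with the Dirichlet-Laplacian eigenbasis, for which $\Delta u^\ell\in H^1_0(\Omega)$ (since $\Delta q_j=-\lambda_j q_j$). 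Hence the integration by parts $\langle -\Delta^2 u,\Delta\partial_t u\rangle=\langle\nabla\Delta u,\nabla\Delta\partial_t u\rangle$ carries no boundary term, and no further integration by parts is needed in the time-derivative term $\langle(1-2k\tilde u)\partial_t^2\Delta u,\Delta\partial_t u\rangle$. Your route, by contrast, requires integrating by parts $\langle(1-2k\tilde u)\partial_t^2\nabla u,-\Delta\partial_t\nabla u\rangle$ to extract the quadratic form, and since $\nabla u^\ell$ does \emph{not} satisfy homogeneous Dirichlet conditions, a nontrivial boundary contribution would remain. This is not fatal---one can reorganise around it---but the paper's multiplier $\Delta\partial_t u$ sidesteps the issue entirely and is the natural choice given the Galerkin framework of Theorem~\ref{existence_linear}.
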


\begin{proof} 
Considering $\partial_t u$ as a test function in the weak formulation of~\eqref{main_integro_lin} yields 
$$
\begin{aligned} 
& \kappa\|\partial_t u \|^2_{L^\infty(0,T; L^2(\Omega))} + \|\nabla u \|^2_{L^\infty(0,T; L^2(\Omega))}   \\
&  \leq  \big[(1 + 2kC_\Omega b)\|v_0 \|^2_{L^2(\Omega)} + \|\nabla u_0\|^2_{L^2(\Omega)} \big]  \exp\Big\{ T \frac{2k}\kappa  \|\partial_t \tilde u\|_{L^\infty(\Omega_T)}\Big\} ,
\end{aligned} 
$$
 where $1- 2k \|\tilde u \|_{L^\infty((0,T)\times\Omega))} \geq 1- 2kC_\Omega b \geq \kappa$ and we used Lemma~\ref{lem:time_lower} in the simpler form $\int_0^t \beta \ast \partial_t \nabla u \cdot \partial_t\nabla u \,d\tau \geq 0$.

 Considering $-\Delta \partial_t u$ as a test function for \eqref{main_integro_lin} and using estimates in  Lemma~\ref{lem:time_lower}  we obtain 
$$
\begin{aligned} 
& \kappa\|\partial_t \nabla u \|^2_{L^\infty(0,T; L^2(\Omega))} + \|\Delta u \|^2_{L^\infty(0,T; L^2(\Omega))}    \leq  \big[\xi\|\nabla v_0 \|^2_{L^2(\Omega)} + \|\Delta u_0\|^2_{L^2(\Omega)} \big] \times \\
& \qquad \times \exp\Big\{ T \frac {C_\Omega}\kappa  \Big[\|\partial_t \tilde u\|_{L^\infty(\Omega_T)} + \|\partial_t \nabla \tilde u \|_{L^\infty(0,T; L^4(\Omega))}\\
&\qquad \qquad + \frac {1}{\kappa}  \|\nabla \tilde u\|_{L^\infty(\Omega_T)}\Big(1 + 
  \|\partial_t \tilde u \|_{L^\infty(0,T;L^4(\Omega))} + \frac{1}{\kappa} \|\nabla \tilde u\|_{L^\infty(\Omega_T)} \Big)\Big]\Big\} 
\end{aligned} 
$$
and 
$$
\begin{aligned} 
& \|\beta \ast \Delta \partial_t u \|^2_{L^2(\Omega_T)} 
\leq \big[\xi\|\nabla v_0 \|^2_{L^2(\Omega)} + \|\Delta u_0\|^2_{L^2(\Omega)} \big] \Big[T
\frac {C_\Omega}\kappa  \Big(  \|\partial_t \nabla \tilde u \|_{L^\infty(0,T; L^4(\Omega))} 
\\
& \; + \|\partial_t \tilde u\|_{L^\infty(\Omega_T)}+ \frac 1{\kappa} \|\nabla \tilde u\|_{L^\infty(\Omega_T)}\big[1 + 
  \|\partial_t \tilde u \|_{L^\infty(0,T;L^4(\Omega))} +  \frac{1}{\kappa} \|\nabla \tilde u\|_{L^\infty(\Omega_T)} \big] \Big)\times \\
& \quad \times \exp\Big\{ T \frac {C_\Omega}\kappa  \Big(\|\partial_t \tilde u\|_{L^\infty(\Omega_T)} +  \|\partial_t \nabla \tilde u \|_{L^\infty(0,T; L^4(\Omega))}\\
&\quad + \frac 1{\kappa} \|\nabla \tilde u\|_{L^\infty(\Omega_T)}\big[1 +   \|\partial_t \tilde u \|_{L^\infty(0,T;L^4(\Omega))}+ \frac{1}{\kappa} \|\nabla \tilde u\|_{L^\infty(\Omega_T)} \big]\Big)\Big\} + \frac 1 a \Big].
\end{aligned} 
$$
Applying $\Delta$ to  \eqref{main_integro_lin} and taking $\Delta \partial_t u$ as a test function in the weak formulation of the problem  implies 
$$
\begin{aligned} 
& \kappa\|\partial_t \Delta u \|^2_{L^\infty(0,T; L^2(\Omega))} + \|\Delta \nabla u \|^2_{L^\infty(0,T; L^2(\Omega))}   \leq \big[  \xi \|\Delta v_0\|^2_{L^2(\Omega)} + \|\Delta \nabla u_0\|^2_{L^2(\Omega)}\big]\times \\
& \times \exp\Big\{T \frac{C_\Omega} \kappa\Big(  
\|\partial_t \Delta\tilde u \|_{L^\infty(0,T; L^2(\Omega))}  + \frac 1{\kappa}  \Big[\|\Delta \tilde u \|_{L^\infty(0,T; L^4(\Omega))} \big(\|\partial_t \nabla \tilde u \|_{L^\infty(0,T;  L^2(\Omega))}   
\\
&  
 \qquad \quad + 1 \big) + \|\nabla \tilde u \|_{L^\infty(\Omega_T)} \big( \|\nabla\tilde u\|_{L^\infty(0,T;L^4(\Omega))}+ 1\big) \big(\|\partial_t\nabla \tilde u \|_{L^\infty(0,T;  L^2(\Omega))}+1\big) \Big]
 \\ 
 & \qquad \quad 
   + \frac {1}{\kappa^2} \Big[\|\Delta \tilde u \|_{L^\infty(0,T; L^4(\Omega))}^2+ \|\nabla \tilde u \|^2_{L^\infty(\Omega_T)}\big(1+ \|\nabla \tilde u \|^2_{L^\infty(0,T; L^4(\Omega))}\big) \Big]
 \Big)\Big \}. 
\end{aligned} 
$$
Using  the Sobolev embedding inequality yields the second estimate in~\eqref{estim_apriori}. 
From those estimates, using the weak formulation of the problem,  we also obtain the estimate for $\beta \ast \Delta \nabla \partial_t u$ in $L^2((0,T)\times \Omega)$.
The strong formulation of the equation in~\eqref{main_integro_lin}, see~\eqref{second_deriv_strong_1} in Appendix, together with the estimates for $\nabla \Delta u$ in $L^\infty(0,T; L^2(\Omega))$, $\partial_t u$ in $L^\infty(0,T; H^2(\Omega))$, and $\beta \ast \Delta \nabla \partial_t u$ in $L^2((0,T)\times \Omega)$, implies  the estimate
for $\partial_t^2 \nabla u$ in $L^2((0,T)\times \Omega)$.
See appendix for more details on the derivation of a priori estimates.
% $$
% \begin{aligned} 
% & \|\beta \ast \Delta \nabla \partial_t u \|^2_{L^2(\Omega_T)} 
% \leq
% \big[\|u_0\|^2_{H^3(\Omega)} + \xi \|v_0\|^2_{H^2(\Omega)} \big] \Big( 1 + T \frac{C_\Omega }\kappa \Big[ 
% \|\partial_t \Delta\tilde u \|_{L^\infty(0,T; L^2(\Omega))} \\
% &\;  +
% \frac 1{\kappa}  \big[\|\Delta \nabla\tilde u \|_{L^\infty(0,T; L^2(\Omega))} + \|\nabla \tilde u \|_{L^\infty(\Omega_T)}(1+ \| \Delta \tilde u \|_{L^\infty(0,T; L^2(\Omega))})\big] \times \\
% & \; \times \big[1 + \|\partial_t\nabla \tilde u \|_{L^\infty(0,T; L^2(\Omega))} \big]   + \frac {1}{\kappa^2} \|\Delta \nabla\tilde u \|_{L^\infty(0,T; L^2(\Omega))}^2 \big[1+ \|\Delta \tilde u \|^2_{L^\infty(0,T; L^2(\Omega))}\big] 
%    \Big] \times \\
%  &\qquad  \times   \exp\Big\{T \frac{C_\Omega }\kappa \Big[ 
% \|\partial_t \Delta\tilde u \|_{L^\infty(0,T; L^2(\Omega))} +
% \frac 1{\kappa}  \big[\|\Delta \nabla\tilde u \|_{L^\infty(0,T; L^2(\Omega))} \\
% & \qquad \quad  + \|\nabla \tilde u \|_{L^\infty(\Omega_T)}(1+ \| \Delta \tilde u \|_{L^\infty(0,T; L^2(\Omega))})\big] \big(1 + \|\partial_t\nabla \tilde u \|_{L^\infty(0,T; L^2(\Omega))} \big) \\
% &\qquad \qquad  + \frac {1}{\kappa^2} \|\Delta \nabla\tilde u \|_{L^\infty(0,T; L^2(\Omega))}^2 \big(1+ \|\Delta \tilde u \|^2_{L^\infty(0,T; L^2(\Omega))}\big) 
%    \Big]\Big \}\Big).
% \end{aligned} 
% $$
% This  implies the last estimate of the lemma. 
\end{proof}

\begin{remark}\label{rem:testing}
For simplicity of presentation we have skipped the Galerkin approximation step in the above proof. Note that in the Galerkin approximation, using the notation from Theorem~\ref{existence_linear}, $\partial_t \Delta u^{\ell}$ satisfies the zero Dirichlet boundary condition and hence boundary integrals vanish when integrating by parts. Notice that  the limit as $\ell \to \infty$ in $H^1$-norm  of the Galerkin approximation $\Delta u^{\ell}$   yields $\Delta u = 0$ on $\partial \Omega$ and in the estimates in Lemma~\ref{lem_apriori}  the equivalence between  the $H^1$-norm of $\Delta u$ and  the semi-norm $\|\nabla \Delta u\|_{L^2}$ is used.
\end{remark}

Using a priori estimates proven in Lemma~\ref{lem_apriori} and applying the Banach fixed point theorem yield  local existence of a unique solution of nonlinear problem~\eqref{main_integro}.
\begin{theorem} \label{thm:main_cont}
 For $u_0 \in \dot{H}^3(\Omega)$ and  $v_0 \in H^2(\Omega)\cap H^1_0(\Omega)$, with
 $$\|\Delta u_0\|^2_{L^2(\Omega)} + (1+ 2k C_\Omega b)\|\nabla v_0 \|^2_{L^2(\Omega)}\leq \eta b^2, $$ for any
  $\eta \in (0,1)$, there exists time interval $T=T(R, b, \eta) > 0$ such that $u \in \mathcal K$  is a unique solution of \eqref{main_integro}.
\end{theorem}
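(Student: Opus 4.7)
The plan is to apply the Banach fixed point theorem to the map $\mathcal T: \tilde u \mapsto u$, where $u$ is the solution of the linear problem \eqref{main_integro_lin}, on the set $\mathcal K$. Theorem~\ref{existence_linear} already guarantees that $\mathcal T$ is well-defined on $\mathcal K$, so it remains to verify (i) that $\mathcal T(\mathcal K) \subset \mathcal K$ for sufficiently small $T$, and (ii) that $\mathcal T$ is a strict contraction in a suitable (likely weaker) norm on $\mathcal K$, again for $T$ small enough. Any fixed point is then a solution of \eqref{main_integro}, and uniqueness within $\mathcal K$ follows from the contraction property.

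For step (i), I would feed $\tilde u \in \mathcal K$ into Lemma~\ref{lem_apriori}. Every norm of $\tilde u$ appearing in the exponential factors is bounded by a quantity $F = F(r, R, \kappa, C_\Omega)$, since $\tilde u \in \mathcal K$ controls $\|\Delta \tilde u\|_{L^\infty(0,T;L^2)}$, $\|\nabla \tilde u\|_{L^\infty(\Omega_T)}$, and $\|\partial_t \tilde u\|_{L^\infty(0,T;H^2)}$, and via \eqref{embeding_1} the $L^\infty$-norms of $\tilde u$ and $\partial_t \tilde u$. Consequently the first bound in \eqref{estim_apriori} reduces to
\[
\|\Delta u\|^2_{L^\infty(0,T;L^2)} \leq \bigl[\|\Delta u_0\|^2_{L^2} + \xi\|\nabla v_0\|^2_{L^2}\bigr] e^{T F} \leq \eta r^2 e^{T F},
\]
using the initial-data hypothesis. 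Choosing $T$ small enough that $e^{T F} \leq 1/\eta$ (possible since $\eta < 1$) yields $\|\Delta u\|_{L^\infty(0,T;L^2)} \leq r$. The second bound in \eqref{estim_apriori}, combined with the definition $R^2 = C[(1+2k C_\Omega r)\|v_0\|^2_{H^2} + \|u_0\|^2_{H^3}]$ for a constant $C > 1$ strictly greater than the corresponding factor there, gives $\|u\|^2_{L^\infty(0,T;H^3)} + \kappa \|\partial_t u\|^2_{L^\infty(0,T;H^2)} \leq R^2$ after possibly shrinking $T$ further, establishing $u \in \mathcal K$.

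For step (ii), let $\tilde u_1, \tilde u_2 \in \mathcal K$, set $u_j = \mathcal T(\tilde u_j)$, $w = u_1 - u_2$ and $\tilde w = \tilde u_1 - \tilde u_2$. Subtracting the two instances of \eqref{main_integro_lin} gives
\[
(1-2k\tilde u_1)\partial_t^2 w - \Delta w - a \beta \ast \partial_t \Delta w = 2k\tilde w\, \partial_t^2 u_2 + 2k \partial_t \tilde u_1\, \partial_t w + 2k \partial_t u_2\, \partial_t \tilde w,
\]
with vanishing initial data. Testing with $\partial_t w$, exploiting $(1-2k\tilde u_1) \geq \kappa$ on the time-derivative term, using the nonnegativity of the memory contribution from Lemma~\ref{lem:time_lower}, then Cauchy--Schwarz and Gr\"onwall, I expect an estimate
\[
\|\partial_t w\|^2_{L^\infty(0,T;L^2)} + \|\nabla w\|^2_{L^\infty(0,T;L^2)} \leq C(R,r,\kappa)\, T\, \bigl(\|\partial_t \tilde w\|^2_{L^\infty(0,T;L^2)} + \|\tilde w\|^2_{L^\infty(0,T;L^\infty)}\bigr).
\]
To control the right-hand side term $\tilde w\, \partial_t^2 u_2$ I solve \eqref{main_integro_lin} for $\partial_t^2 u_2$ and bound the resulting spatial and memory pieces using $u_2, \tilde u_2 \in \mathcal K$ and \eqref{estim_apriori_1}. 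Choosing $T$ so that $C(R,r,\kappa) T < 1$ provides the desired contraction in the norm of $W^{1,\infty}(0,T;L^2) \cap L^\infty(0,T;H^1_0)$.

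The main obstacle I anticipate is the mismatch between the strong norm used to define $\mathcal K$ and the weaker norm in which contraction is natural: Banach's theorem requires the ambient metric space to be complete. I would resolve this by observing that $\mathcal K$ is convex and bounded in $L^\infty(0,T;H^3)\cap W^{1,\infty}(0,T;H^2)$, hence weak-$\ast$ closed there, so any Cauchy sequence in $\mathcal K$ with respect to the weaker contraction metric has a weak-$\ast$ cluster point still satisfying the defining bounds of $\mathcal K$; its strong convergence in the weaker norm together with uniqueness of weak-$\ast$ limits then identifies it as the limit in $\mathcal K$. A secondary technical point is that, unlike for classical strongly damped wave equations, the memory term cannot simply be integrated away in the difference estimate, but Lemma~\ref{lem:time_lower} lets it be absorbed on the left-hand side without quantitative estimation, which is what makes the energy approach viable here.
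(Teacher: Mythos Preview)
Your overall strategy --- Banach fixed point on $\mathcal K$, self-mapping via the a~priori estimates of Lemma~\ref{lem_apriori}, contraction in a weaker norm --- is precisely the paper's. Step~(i) is handled identically.

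The difference is in the contraction estimate. The paper tests the difference equation with $-\Delta\partial_t(u_1-u_2)$ and obtains contraction in the norm $\kappa\|\nabla\partial_t(u_1-u_2)\|_{L^\infty(0,T;L^2)}^2+\|\Delta(u_1-u_2)\|_{L^\infty(0,T;L^2)}^2$, i.e.\ one derivative higher than your $H^1\times L^2$ energy. The advantage of the paper's choice is that $\|\tilde u_1-\tilde u_2\|_{L^\infty(\Omega)}$ is directly controlled by $\|\Delta(\tilde u_1-\tilde u_2)\|_{L^2}$ via \eqref{embeding_1}, so the troublesome term $2k(\tilde u_1-\tilde u_2)\partial_t^2 u_2$ closes without further thought. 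Your simpler test with $\partial_t w$ is also viable, but your displayed inequality has a slip: you put $\|\tilde w\|_{L^\infty(0,T;L^\infty(\Omega))}$ on the right, which in dimensions $d=2,3$ is \emph{not} bounded by the $L^\infty(0,T;H^1_0)$ norm you are contracting in. To close, use instead $H^1_0\hookrightarrow L^6$ and pair it with $\|\partial_t^2 u_2\|_{L^3}$; the latter lies in $L^2(0,T)$ because substituting from \eqref{main_integro_lin} and invoking the last estimate of Lemma~\ref{lem_apriori} gives $\beta\ast\partial_t\Delta u_2\in L^2(0,T;H^1)\hookrightarrow L^2(0,T;L^6)$, while $\Delta u_2\in L^\infty(0,T;H^1)$ and $\partial_t\tilde u_2\,\partial_t u_2\in L^\infty(\Omega_T)$ from $\mathcal K$. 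With that repair your lower-energy contraction goes through and yields a small-$T$ factor of order $T^{1/2}$.

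Your explicit treatment of the completeness issue (weak-$\ast$ closedness of $\mathcal K$ combined with strong convergence in the weaker metric) is a point the paper leaves implicit; both the paper's and your contraction norms are strictly weaker than the norms defining $\mathcal K$, so the argument you sketch is in fact needed in either version.
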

\begin{proof}
For $R^2=C_R  \big[\|\nabla \Delta u_0\|^2_{L^2(\Omega)} + (1+2k C_\Omega b) \|\Delta v_0\|^2_{L^2(\Omega)}\big]$ and $T=T(R, b, \eta)$ such that
$$
\exp\{T C_\Omega (R+ R^2)\} \leq 1/\eta \; \; \text{  and } \;\;   \exp\{T C_\Omega (R+ R^2+R^3+ R^4)\} \leq  C_R,
$$
estimates in \eqref{estim_apriori} imply $u= \mathcal T(\tilde u) \in \mathcal{K}$ for $\tilde u \in \mathcal{K}$.
%Here $C_\Omega >0$ collects constants in the exponential functions in~\eqref{estim_apriori}. 

To show that $\mathcal T \colon \mathcal K \to \mathcal K$ is a contraction we consider \eqref{main_integro_lin} for $\tilde u_1$ and $\tilde u_2$ in ${\mathcal K}$
and, taking $-\Delta \partial_t (u_1 - u_2)$ as a test function for %in the weak formulation of
the difference of the corresponding equations, obtain
$$
\begin{aligned} 
& \kappa  \|\nabla \partial_t (u_1 - u_2) \|^2_{L^\infty(0,T; L^2(\Omega))} + 
 \|\Delta (u_1 - u_2) \|^2_{L^\infty(0,T; L^2(\Omega))}  \\
& \leq 
   C_\Omega\Big(
 \Big[\|\partial_t \Delta u_2 \|^2_{L^2(\Omega_T)} + 
 \frac 1 \kappa \|\partial_t \nabla u_2 \|^2_{L^2(\Omega_T)}  \Big]
 \|\nabla \partial_t (\tilde u_1 - \tilde u_2) \|^2_{L^\infty(0,T; L^2(\Omega))}
 \\
 & + \frac 1 \kappa\Big[\|\Delta \nabla u_2\|^2_{L^2(\Omega_T)}  +  \|\partial_t \nabla \tilde u_2 \|^2_{L^2(0,T; H^1(\Omega))} + 
 \|\Delta u_2\|^2_{H^1(0,T; L^2(\Omega))} \\
 & + \|\beta \ast \Delta \partial_t u_2\|_{L^1(0,T; H^1(\Omega))}\big(1+ \|\nabla \tilde u_2\|_{L^\infty(\Omega_T)}\big)\Big]
 \|\tilde u_1 - \tilde u_2\|_{L^\infty(0,T;H^2(\Omega))}^2 \Big)\times 
 \\
 & \times
 \exp\Big\{  \frac {C_\Omega}{ \kappa } \Big(\|\partial_t \Delta \tilde u_1\|_{L^1(0,T;L^2(\Omega))} 
 +  
  \frac 1 \kappa \Big[
 \|\nabla \tilde u_2\|^2_{L^\infty(\Omega_T)} \|\partial_t \nabla u_2 \|^2_{L^2(\Omega_T)} \\
 & + \|\beta \ast \Delta \partial_t u_2\|_{L^1(0,T; H^1(\Omega))}(1+ \|\nabla \tilde u_2\|_{L^\infty(\Omega_T)})
 + T\big(1+ \|\nabla \tilde u_1\|^2_{L^\infty(\Omega_T)}
 \\
 & + \| \tilde u_2\|^2_{W^{1,\infty}(\Omega_T)}+ \|\partial_t  u_2\|^2_{L^\infty(\Omega_T)}\big) + \|\nabla \tilde u_1\|_{ L^\infty(\Omega_T)} \|\partial_t \nabla \tilde u_1 \|_{L^1(0,T; L^2(\Omega))} \Big] \\
 & + 
 \frac 1 {\kappa^2} \|\nabla \tilde u_1\|^2_{L^2(0,T; L^\infty(\Omega))} 
 \Big) \Big\}
 \leq C_\Omega [T R^2+ T^{\frac 12}(R+R^2)] \times \\
 & \times \exp\big\{C_\Omega\big[T (1+R + R^2+  R^4) +  T^{\frac 12} (R^3+ R^4) \exp\{ C_\Omega T (R+ R^2)\}\big]\big\}\times \\
& \qquad \times \big( \kappa \|\nabla \partial_t (\tilde u_1 - \tilde u_2) \|^2_{L^\infty(0,T; L^2(\Omega))} + 
  \|\Delta(\tilde u_1 - \tilde u_2)\|_{L^\infty(0,T;L^2(\Omega))}^2\big).
\end{aligned}
$$
Then for $T$ such that $ C_\Omega [T R^2+ T^{\frac 12}(R+R^2)] \exp\big\{C_\Omega\big[T (1+R + R^2+  R^4) +  T^{\frac 12} (R^3+ R^4) \exp\{ C_\Omega T (R+ R^2)\}\big]\big\} <1$ we have that $\mathcal T : \mathcal{K} \to \mathcal{K}$ is a contraction. Thus applying the Banach fixed point theorem, and iterating over time,  yields  existence of a unique solution of the nonlinear problem~\eqref{main_integro}. 
%
%Considering the equation for $u$ in \eqref{main_integro} and using the fact that $u \in \mathcal{K}$, in the similar way as in the proof of Lemma~\ref{lem_apriori} we obtain that $\partial_t^2 u \in L^2(0,T; H^1(\Omega))$.
\end{proof}

%%%%%%%%%%%%%%%%%%%%%%%%%%%%%%%%%%%%%%%%%%%%%%%%%%%%%%%%%%%%%%%%
%%%%%%%%%%%%%%%%%%%%%%%%%%%%%%%%
%%%%%%%%%%%%%%%%%%%%%%%%%%%%%%%%%%%%%%%%%%%%%%%%%%%%%%%%%%%%%%%%

\section{Trapezoidal discretization}
In this section we present analysis for the numerical scheme for problem~\eqref{main_integro}. The time semi-discretization considered here is based on trapezoidal time-stepping with uniform time-step $\tstep >0$ and  $n = 1,2 \dots, N$, with {$T=N \tstep$},
\begin{equation} \label{main_discrete}
(1-2k\{u\}_n)D^2u_n-\Delta \{u\}_{n}-a \beta\ast_{\tstep} D\Delta u_n = 2k(Du_n)^2,
\end{equation} 
where $u_n \in H^1_0(\Omega)\cap H^2(\Omega)$ and 
\[
\begin{aligned}
& Du_n = \frac1{2\tstep}(u_{n+1}-u_{n-1}), \;&&
D^2u_n = \frac1{\tstep^2}(u_{n+1}-2u_n+u_{n-1}), \; \\
& \{u\}_n = \frac14(u_{n+1}+2u_n+u_{n-1}), \; && \tilde D u_n = \frac1{\tstep}(u_{n+1}-u_{n}),
\end{aligned}
\]
with  $Du_0: = v_0$,  and  $[\beta \ast_{\tstep} g]_n$ (with the square brackets in most places left-out) a convolution quadrature approximation  of   $\int_0^{t_n}\beta(t_n-\tau)g(\tau) d\tau$. 
We will use convolution quadrature based on the second order backward difference formula (BDF2) \cite{Lubich1986,Lubich1988I,Lubich1988II}  which results in the discrete convolution 
\[
[\beta\ast_{\tstep} v ]_n = \sum_{j = 0}^n \omega_{n-j} v_j, 
\]
with convolution weights $\omega_j$ given by the generating function
\[
\hat\beta\left(\frac{\delta(\zeta)}{\tstep}\right) = \sum_{j = 0}^\infty \omega_j \zeta^j, \qquad \delta(\zeta) = (1-\zeta)+\frac12(1-\zeta)^2.
\]
For $v$ that is sufficiently smooth and with sufficiently many zero derivatives at $t =0 $, we have that $[\beta\ast_{\tstep} v ]_n = \beta \ast v(t_n)+ \O(\tstep^2)$, whereas for $v(t) = t^\alpha$ and real $\alpha > -1$ 
\begin{equation}
  \label{eq:CQ_poly_error}
\Big|[\beta\ast_{\tstep} v ]_n-\beta \ast v(t_n)\Big|
\leq 
    \begin{cases} 
Ct_n^{\mu -1} \tstep^{\alpha+1}      & \text{for } -1 < \alpha \leq 1,\\ 
Ct_n^{\mu +\alpha-2} \tstep^{2}  & \text{for } \alpha \geq 1,
\end{cases} 
\end{equation}
for $n = 1,\dots$; see \cite[Theorem 2.2]{Lubrev}. 
%In the case of fractional derivative since $\gamma \in (1,2)$, $\beta(t) = I_t^{2-\gamma}$ is the Riemann-Liouville fractional integral and $\beta(t)_{\tstep}$ is its discretization.

Alternatively, we can use the corrected CQ formula
\begin{equation}
  \label{eq:corrected}
  [\beta \tilde \ast_{\tstep} v]_n := [\beta \ast_{\tstep} v]_n+\omega_{n,0}v_0,
\end{equation}
where $\omega_{n,0}$ is chosen so that the formula is exact for constant function, i.e., 
\[
\omega_{n,0} := \int_0^{t_n} \beta(\tau) d\tau -[\beta \ast_\tstep 1]_n =  \int_0^{t_n} \beta(\tau) d\tau -\sum_{j = 0}^n \omega_j.
\]
Note the trivial but useful fact that  $\beta \tilde \ast_{\tstep} v  \equiv \beta \ast_{\tstep} v$ if $v_0 = 0$. From \eqref{eq:CQ_poly_error} and the definition of $\omega_{n,0}$ it follows that we have the stability bound
\begin{equation}
  \label{eq:corr_bnd}
  |\omega_{n,0}| \leq C t_n^{\mu -1} \tstep
\end{equation}
for $n \geq 1$. The first correction weight is $\omega_{0,0} = -\omega_0$, where
$\omega_0 = \hat \beta(\delta(0)/\tstep) = \hat\beta\left(\frac3{2\tstep}\right) \sim (2/3)^\mu \tstep^\mu$ as $\tstep \to 0$. In the estimates below, we will only require the fact that  $\omega_{n,0}$ are bounded by a constant independent of $\tstep$ for all $n \geq 0$. The semi-discretisation with the corrected CQ formula reads
\begin{equation} \label{main_discrete_corr}
(1-2k\{u\}_n)D^2u_n-\Delta \{u\}_{n}-a \beta\tilde \ast_{\tstep} D\Delta u_n = 2k(Du_n)^2.
\end{equation} 
 When using the corrected scheme, we will further assume that $v_0 \in H^3(\Omega)$.

A crucial property of (the non-corrected) convolution quadrature, see \cite[Lemma~2.1]{BaLuSa:2015} and \cite[Theorem~2.25]{lbbook}, is that \eqref{eq:hatbeta_lower} implies
\begin{equation}
  \label{eq:CQ_pos}
  \sum_{j = 0}^\infty \scl^{2j} \left\langle v_j, [\beta \ast_\tstep v]_j\right\rangle
\geq C_\beta \sum_{j = 0}^\infty \scl^{2j} \|[\beta \ast_\tstep v]_j\|_{L^2}^2,
\end{equation}
for $\scl = e^{-\sigma \tstep}$, with $\sigma > 0$, and 
\[
C_\beta := (\tilde\sigma+r)^\mu \; \;(\text{if } \beta =\betaA), \quad C_\beta := 1 \;  \;(\text{if } \beta =\betaB),
\]
where $\tilde \sigma = C_2\min(1,\sigma)$. 
Thus, if $\beta = \betaA$  with $r > 0$ or $\beta = \betaB$, we can set $\sigma = 0$ (i.e., $\scl = 1$ and $\tilde \sigma  =0 $) and still obtain positivity of the left hand side in \eqref{eq:CQ_pos}. 

For the corrected version, all we can say is that
\begin{equation}
  \label{eq:CQ_pos_corr}
  \sum_{j = 0}^\infty \scl^{2j} \left\langle v_j, [\beta \tilde \ast_\tstep v]_j\right\rangle
\geq C_\beta \sum_{j = 0}^\infty \scl^{2j} \|[\beta \ast_\tstep v]_j\|_{L^2}^2+ \sum_{j = 0}^\infty \scl^{2j} \omega_{j,0}\left\langle v_j,v_0\right\rangle.
\end{equation}

To initiate the iterations in~\eqref{main_discrete}  we set 
\begin{equation}
  \label{eq:initial}
u_1 = u_0+ \tstep v_0+\frac12 \tstep^2 \partial_t^2u(0),  
\end{equation}
where we can determine $\partial_t^2u(0)$ from the equation \eqref{main_integro}
\begin{equation}
  \label{eq:dttu0}
    \partial_t^2 u(0) = \frac1{1-2ku_0}\left(\Delta u_0 + 2k (v_0)^2\right).
\end{equation}

\begin{lemma} \label{lem:disc_bounds}
 Under the assumptions on the initial data \eqref{init_cond_assum}, {along with $u_0 \in H^4(\Omega)$} and
$$
 \Big\|\Delta\frac{ u_0+ u_1} 2\Big\|^2_{L^2(\Omega)} + (1+ 2k C_\Omega b) \Big\|\nabla \frac{u_1 -u_0}{\tstep} \Big\|^2_{L^2(\Omega)} \leq \eta b^2,
$$
where $0< C_\Omega b \leq (1- \kappa)/2k$, with $0< \kappa <1$, and the constant $C_\Omega$ is the constant in \eqref{H2_Delta} and \eqref{embeding_1},
  we have the following stability estimates for the scheme~\eqref{main_discrete}
\begin{equation} \label{eq:stab_est}
 \begin{aligned} 
  &  \kappa \sup_{1\leq n\leq N-1}\|\tilde D \nabla u_n \|^2_{L^2(\Omega)} + \sup_{1\leq n\leq N-1}\|\Delta (u)_n\|^2_{L^2(\Omega)}\leq b^2, \\
  &   \kappa \sup_{1\leq n\leq N-1} \|\tilde D \Delta u_n \|_{L^2(\Omega)}^2 + \sup_{1\leq n\leq N-1}\|\nabla \Delta(u)_n\|^2_{L^2(\Omega)} \leq R^2,
 \end{aligned}
\end{equation}
where  $(u)_n = (u_{n+1} + u_n)/2$ and  $R^2 = C_R\big[ \| \Delta \nabla (u)_0 \|_{L^2}^2 + (1+2kC_\Omega b)\|\tilde D \Delta u_0 \|_{L^2}^2 \big]$ with $C_R >1$.
\end{lemma}

\begin{proof}
In order to analyse the system we need that $1-2k\{u\}_n\geq \kappa >0$ for some (fixed) $\kappa \in (0,1)$. Thus similarly to the continuous case, we consider the fixed-point iteration 
\begin{equation} \label{main_discrete_k}
(1-2k  d_n)D^2u_n-\Delta \{u\}_{n}-a\beta\ast_{\tstep} D\Delta u_n = 2k  v_n Du_n,
\end{equation}
where $d_n =\{\tilde u\}_n$ and $v_n = D\tilde u_n$ for $\tilde u_n \in \dot H^3(\Omega) \cap H^1_0(\Omega)$ satisfying  \eqref{eq:stab_est} (with $u_n$ replaced by $\tilde u_n$).
% \begin{equation}
%  \begin{aligned}
%  & \kappa \sup_{1\leq n\leq N-1} \|\tilde D \tilde u_n \|^2_{H^2(\Omega)} + \sup_{1\leq n\leq N-1}\| \nabla\Delta(\tilde u)_n\|^2_{L^2(\Omega)} \leq R^2,\\
%  & \kappa \sup_{1\leq n\leq N-1} \|\tilde D\nabla \tilde u_n \|^2_{L^2(\Omega)} + \sup_{1\leq n\leq N-1} \|\Delta(\tilde u)_n\|^2_{L^2(\Omega)} \leq  b^2.
%  \end{aligned}
% \end{equation}
%Thus we solve~\eqref{main_discrete} by setting up an iteration: For $j = 0,1, \dots,$
%\begin{equation} \label{main_discrete_k}
%(1-2kd_n^{(j)})D^2u_n^{(j)}-\Delta \{u^{(j)}\}_{n}-a\beta\ast_{\tstep} D\Delta u^{(j)}_n = 2k v^{(j)}_n Du^{(j)}_n,
%\end{equation} 
%with $d_n^{(0)} = u_0$, $v_n^{(0)} = v_0$ and  $d_n^{(j)} = \{u^{(j-1)}\}_n$, $v_n^{(j)} = Du^{(j-1)}_n$ for $j \geq 1 $ and $n = 1,\dots,N$.

%\todo{Do we fix $u_1$? - Yes I think we need to fix $u_1$.  How do we define $u_n^{(0)}$? - is defined through equation (16) . May be better to start iterations with $j=1$ - but this is just question of notatons}

To derive the stability estimates we first test \eqref{main_discrete_k} with $\scl^{2n}Du_n$,  $\scl = e^{-\tstep/T}$,  and estimate each term separately.  For the first term  we have 
\[
  \begin{split}
   &\tstep\sum_{n = 1}^{N-1}\scl^{2n}\big\langle(1-2k d_n)D^2u_n,D u_n\big\rangle \\
\geq & \frac12 \int_\Omega \scl^{2(N-1)}(1-2k  d_{N-1})\left(\frac{u_N-u_{N-1}}{\tstep}\right)^2dx - \frac12 \int_{\Omega}(1-2k d_1)\left(\frac{u_1-u_0}{\tstep}\right)^2dx\\
&+k \int_\Omega \sum_{n = 1}^{N-2} \scl^{2(n+1)} (d_{n+1}-d_n) \Big(\frac{u_{n+1}-u_n}{\tstep}\Big)^2dx\\
= & \frac12 \int_\Omega \scl^{2(N-1)}(1-2k d_{N-1})\Big(\frac{u_N-u_{N-1}}{\tstep}\Big)^2dx - \frac12 \int_{\Omega}(1-2k d_1)\Big(\frac{u_1-u_0}{\tstep}\Big)^2dx\\
&+k\tstep \int_\Omega \sum_{n = 1}^{N-2} \scl^{2(n+1)} \left(\frac{(\tilde u)_{n+1}-(\tilde u)_{n-1}}{2\tstep}\right)\big(\tilde D u_{n}\big)^2dx.
\end{split}
\]
%\[
 % \begin{split}
 %  &\tstep\sum_{n = 1}^{N-1}\scl^{2n}\left((1-2k d^{(j)}_n)D^2u_n^{(j)},D u^{(j)}_n\right) \\\geq & \frac12 \int_\Omega \scl^{2(N-1)}(1-2k d^{(j)}_{N-1})\left(\frac{u_N^{(j)}-u^{(j)}_{N-1}}{\tstep}\right)^2dx - \frac12 \int_{\Omega}(1-2k d^{(j)}_1)\left(\frac{u^{(j)}_1-u^{(j)}_0}{\tstep}\right)^2dx\\&+k \int_\Omega \sum_{n = 1}^{N-2} \scl^{2(n+1)} (d^{(j)}_{n+1}-d^{(j)}_n) \Big(\frac{u^{(j)}_{n+1}-u^{(j)}_n}{\tstep}\Big)^2dx\\= & \frac12 \int_\Omega \scl^{2(N-1)}(1-2k d^{(j)}_{N-1})\Big(\frac{u_N^{(j)}-u^{(j)}_{N-1}}{\tstep}\Big)^2dx - \frac12 \int_{\Omega}(1-2k d^{(j)}_1)\Big(\frac{u^{(j)}_1-u^{(j)}_0}{\tstep}\Big)^2dx\\&+k\tstep \int_\Omega \sum_{n = 1}^{N-2} \scl^{2(n+1)} \left(\frac{(u^{(j-1)})_{n+1}-(u^{(j-1)})_{n-1}}{2\tstep}\right)\big(\tilde D u^{(j)}_{n}\big)^2dx,
%\end{split}
%\]
%where $(w)_n=(w_{n+1} + w_n)/2$.
The last term in the estimate above can be bounded by
%Notice that $d_n^{(0)}$ is constant and for  $j=0$ we don't have the last term in the estimate above, whereas for $j \geq  1$ we can bound the that term by
\[
k \tstep \sum_{n = 1}^{N-2} \scl^{2(n+1)} \big\|D(\tilde u)_n\big\|_{L^\infty} \big\|\tilde Du_n\big\|^2_{L^2}. 
\]
Using  $1-2kd_n \geq \kappa >0$,   together with  the estimates for the convolution quadrature, yields 
\[
  \begin{split}    
E_{N-1} &\leq E_0-C_\beta \tstep \sum_{n = 1}^{N-1}\varrho^{2n}\big|\beta\ast_{\tstep} Du_n\big|^2_{H^1}\\&
\quad + k\tstep\sum_{n=1}^{N-2}  \scl^{2(n+1)} \big\|D(\tilde u)_n\big\|_{L^\infty}\big\|\tilde Du_n\big\|^2_{L^2} + 2k\tstep\sum_{n=1}^{N-1}  \scl^{2n} \big\|D\tilde u_n\big\|_{L^\infty}\big\|Du_n\big\|^2_{L^2} \\
& \leq E_0-C_\beta \tstep \sum_{n = 1}^{N-1}\varrho^{2n}\big|\beta\ast_{\tstep} Du_n\big|^2_{H^1}
+ C \sup\limits_{0\leq n \leq N-1} \big\|\tilde D\tilde u_n\big\|_{L^\infty}  \tstep\sum_{n=0}^{N-1}  \scl^{2n} \big\|\tilde Du_n\big\|^2_{L^2},
  \end{split}
\]
%\[
 % \begin{split}    
%E_{N-1}^{(j)} \leq& E_0^{(j)}-C_\beta \tstep \sum_{n = 1}^{N-1}\varrho^{2n}\big\|\beta\ast_{\tstep} Du^{(j)}_n\big\|^2_1\\&+k\tstep\sum_{n=1}^{N-2}  \scl^{2(n+1)} \big\|D(u^{(j-1)})_n\big\|_{\infty}\big\|\tilde Du^{(j)}_n\big\|^2_{2},
 % \end{split}
%\]
where for $n \geq 1$
\[
E_n = \frac12 \scl^{2n}\left|\frac{u_{n+1}+u_{n}}{2}\right|_{H^1}^2+ \frac12 \int_\Omega \scl^{2n}(1-2k d_{n})\big |\tilde D u_n\big |^2dx
\]
and
\[
  E_0 = \frac12 \left|\frac{u_1+u_0}{2}\right|_{H^1}^2+  \frac12 \int_{\Omega}(1-2kd_1)\big|\tilde D u_0\big|^2 dx. 
\]
Then the discrete Gr\"onwall inequality ensures
\[
E_{N-1} \leq  E_0 \exp\Big\{C   \sup\limits_{0\leq n \leq N-1}\big\|\tilde D\tilde u_n\big\|_{L^\infty} \tstep\sum_{n=0}^{N-1}  \scl^{2n}\left\|\frac1{1-2kd_n}\right\|_{L^\infty} \Big\}.
\]

When considering the corrected convolution quadrature, we will have the aditional term 
\begin{equation}\label{corrected_estim_v0}
\begin{aligned}
&\tstep \sum_{n=1}^{N-1}\scl^{2n} \big| \omega_{n,0} \langle \nabla Du_n, \nabla Du_0 \rangle  \big| \\
&\leq  C  \tstep \sum_{n=1}^{N-1}\scl^{2n}  \Big[ \frac \delta 2\big( \|\nabla (u)_{n+1}\|^2_{L^2} + \|\nabla (u)_{n}\|^2_{L^2} \big) + C_\delta \| v_0\|_{H^1}^2 \Big]\\
& \leq  \delta \sup_n \|\nabla (u)_{n}\|^2_{L^2} +C, 
\end{aligned} 
\end{equation}
for any fixed $\delta >0$. 
Then the first term can be subtracted from the corresponding term on the left-hand side.

Taking $-\scl^{2n} \Delta Du_n$ as a test function in \eqref{main_discrete_k} and integrating by parts in the first term on the left-hand side and in the right-hand side  yield
\begin{equation}\label{Grad_eq}
 \begin{aligned} 
 \tstep \sum_{n=1}^{N-1} \scl^{2n} \Big[\big\langle  (1- 2k d_n) D^2 \nabla u_n, D\nabla u_n \big\rangle + \big\langle \Delta  \{u\}_{n}, \Delta  D u_n \big\rangle 
\\
+  \Big\langle a \beta \ast_{\tstep} D \Delta    u_n,   D \Delta  u_n \Big\rangle \Big]
\\
=2k\tstep \sum_{n=1}^{N-1} \scl^{2n}\big\langle Du_n  \nabla  v_n 
+ D\nabla u_n  v_n +  \nabla d_n D^2 u_n, D\nabla u_n \big\rangle  . 
 \end{aligned}
\end{equation} 
Using equation \eqref{main_discrete_k} we rewrite the last term on the right-hand side  as
$$
\begin{aligned} 
 \big\langle \nabla d_n D^2 u_n, D\nabla u_n \big\rangle
= \Big\langle \frac 1 { 1 -2 k d_n}\nabla d_n\Big( \Delta\{ u\}_n + a \beta\ast_{\tstep} D\Delta  u_n\Big), D\nabla u_n \Big\rangle 
\\
 + 
2k\Big \langle \frac 1 { 1 -2 k d_n} \nabla d_n D u_n v_n, D\nabla u_n \Big \rangle. 
\end{aligned} 
$$
Similar as above for the first term in \eqref{Grad_eq} we have 
$$
\begin{aligned} 
 & \tstep \sum_{n=1}^{N-1} \scl^{2n} \Big[\big\langle  (1- 2k d_n) D^2 \nabla u_n, D\nabla u_n \big \rangle 
 \\
& \quad  \geq \frac12 \scl^{2(N-1)} \int_\Omega \big(1-2k d_{N-1}\big)\big|\nabla \tilde D u_{N-1}\big|^2dx 
 - \frac12 \int_{\Omega}\big(1-2k d_{1}\big)\big|\nabla \tilde D u_0\big|^2dx \\
  & \qquad +k\tstep\sum_{n = 1}^{N-2}\scl^{2(n+1)}\int_\Omega  D(\tilde u)_{n} \big|\nabla \tilde D u_n\big|^2 dx.
\end{aligned}
$$
For the second and third terms in \eqref{Grad_eq} we have 
$$
 \begin{aligned} 
 & \tstep \sum_{n=1}^{N-1} \scl^{2n} \Big[\big\langle \Delta  \{u \}_{n}, \Delta  D u_n \big\rangle 
 + \big\langle a \beta \ast_{\tstep} D\Delta    u_n,   D \Delta   u_n \big\rangle \Big] 
\\
&\quad  \geq \frac 12 \scl^{2(N-1)}\Big \| \frac{ \Delta  (u_N + u_{N-1})} 2 \Big \|_{L^2}^2 - 
\frac 12 \Big \| \frac{ \Delta  (u_1 + u_{0})} 2 \Big \|_{L^2}^2
 \\
 & \qquad + C_\beta \tstep \sum_{n=1}^{N-1} \scl^{2n} \big\|\beta\ast_{\tstep} D \Delta   u_n \big\|_{L^2}^2 .
 \end{aligned}
$$
The terms on the right-hand side are estimated as 
$$
\begin{aligned} 
 & \tstep \sum_{n=1}^{N-1} \scl^{2n}
 \Big[\Big\langle \frac {\nabla d_n} { 1 -2 k d_n} \Delta\{ u\}_n, D\nabla u_n \Big\rangle 
\\
&+ a\Big\langle \frac {\nabla d_n} { 1 -2 k d_n}  \beta \ast_{\tstep} D\Delta  u_n, D\nabla u_n \Big\rangle + 
2k \Big\langle \frac {\nabla d_n} { 1 -2 k d_n}  D u_n v_n, D\nabla u_n \Big\rangle \Big]
\\
& \leq  \varsigma \tstep \sum_{n=1}^{N-1} \scl^{2n} \|\beta\ast_{\tstep} D\Delta  u_n\|_{L^2}^2 + 
C \tstep \sum_{n=1}^{N-1} \scl^{2n} \Big\|\frac 1 { 1 -2 k d_n}\Big\|_{L^\infty} \|\nabla d_n \|_{L^\infty} \times \\
&\quad  \times \Big[ \|\Delta \{ u \}_n \|_{L^2}^2 
+\|D\nabla u_n \|_{L^2}^2  \Big(1 +  \|v_n\|_{L^4} + C \Big\|\frac 1 { 1 -2 k d_n}\Big\|_{L^\infty} \|\nabla d_n \|_{L^\infty}  \Big)
  \Big],  
\end{aligned}
$$
where we assume $\varsigma \leq C_\beta/2$, and
$$
\begin{aligned} 
2k\tstep \sum_{n=1}^{N-1} \scl^{2n} \Big[\big\langle  Du_n \nabla  v_n,  D\nabla u_n \big\rangle  + \big\langle  D\nabla u_n v_n,  D\nabla u_n \big\rangle \Big]
\\
\leq C\tstep \sum_{n=1}^{N-1} \scl^{2n}
\big(\| v_n \|_{L^\infty} + \|\nabla v_n\|_{L^4}\big) \|D \nabla  u_n\|_{L^2}^2 .
\end{aligned}
$$
Applying the discrete Gr\"onwall inequality we obtain 
\begin{equation} 
 \begin{aligned} 
  & \int_\Omega \scl^{2(N-1)}(1-2kd_{N-1})|\tilde D \nabla u_{N-1}|^2dx + 
  \scl^{2(N-1)} \|\Delta (u)_{N-1} \|^2_{L^2} \\
 & +  C_\beta\tstep \sum_{n = 1}^{N-1} \scl^{2n}\left\|\beta\ast_{\tstep} D \Delta u_n\right\|^2_{L^2} \leq \big[\nu\|\tilde D u_0 \|^2_{L^2}+ \big\|\Delta (u)_0 \big\|^2_{L^2}   \big]\times \\
 &\quad \times \exp\Big\{ C\tstep\sum_{n = 1}^{N-1} \Big\|\frac 1{1 -2 k d_{n}}\Big \|_{L^\infty} \Big[\|\nabla v_n \|_{L^4} + \| v_n \|_{\infty}  \\
& \qquad + 
\frac{\big\|  \nabla d_{n} \big\|_{L^\infty}}{\big\|{1 -2 k d_{n}}\big \|_{L^\infty}} \Big(\Big \|\frac 1{1 -2 k d_{n}}\Big \|_{L^\infty} \big\|  \nabla d_{n} \big\|_{L^\infty} 
+ 1+ \|v_n\|_{L^4}\Big) \Big]\\
& \leq \Big[   \nu\|\nabla\tilde D u_0 \|^2_{L^2} + \|\Delta (u_0+ u_1)/2\|^2_{L^2} \Big]  \exp\Big\{\frac{C_\Omega}{\kappa}\tstep\sum_{n = 1}^{N-1}
\Big[ \|\Delta v_n\|_{L^2}\\
& \quad + \frac 1{\kappa^2} \|\nabla\Delta d_n\|_{L^2}^2 + \frac1 \kappa\big(\|\nabla\Delta d_n\|_{L^2}+ \|\nabla\Delta d_n\|_{L^2}^2 + \|\nabla v_n\|^2_{L^2}\big)\Big] \Big\} ,
 \end{aligned}
\end{equation}
where $\nu =1+  k (\|\tilde u_0\|_{L^\infty} + 2 \|\tilde u_1\|_{L^\infty} + \|\tilde u_2\|_{L^\infty})/2 $.  %for $j\geq 1$ and  $\nu = 2 k \max|u_0|$ for $j=0$.
 Here we used \eqref{Lipschitz_regul} and that  $\Delta d_n =0$ on $\partial\Omega$.
Thus for 
$$ \kappa \sup_{1\leq n\leq N-1} \|\Delta v_n\|^2_{L^2} + \sup_{1\leq n\leq N-1}\|\nabla \Delta d_n\|^2_{L^2} \leq R^2,
$$
%with $R\geq 1$ and $R^2 \scl^{2(N-1)} \geq 2  \big(\nu\|\tilde D \Delta u_0 \|_{L^2}^2 + \| \Delta \nabla (u_0+ u_1)/2 \|_{L^2}^2\big)$,
and initial conditions 
$$
\|\Delta (u)_0\|^2_{L^2} + (1+ 2k  C_\Omega b)\|\nabla v_0 + (1/2)\tstep \nabla \partial_t^2 u(0)  \|^2_{L^2} \leq \eta  b^2,
$$
and appropriate $T>0$, such that  
$$
 \exp\big\{C_\Omega T (R/\kappa + (R+2R^2)/\kappa^2 + R^2 /\kappa^3)\big\}\leq \frac {\min_n \rho^{2(n-1)}} {\eta}, 
$$
 we obtain 
$$
\kappa \sup_{1\leq n\leq N-1}\|\tilde D \nabla u_n \|^2_{L^2} + \sup_{1\leq n\leq N-1}\|\Delta (u)_n\|^2_{L^2} \leq  b^2,
$$
where $0<\kappa \leq  1- 2k C_\Omega b$ and $C_\Omega$ is the constant in \eqref{H2_Delta} and \eqref{embeding_1}. This ensures
$$
\begin{aligned}
\| \{ u\}_n \|_{L^\infty} & \leq \frac 12 C_{\Omega} \big(\|(u)_n\|_{H^2} + 
\|(u)_{n-1} \|_{H^2} \big)\\
& \leq \frac 12 C_\Omega  \big(\|\Delta (u)_n \|_{L^2} + \|\Delta (u)_{n-1} \|_{L^2} \big) \leq C_\Omega  b \quad \text{ for } \; 1\leq n \leq N-1.
\end{aligned} 
$$ 
% 
% $$
% \begin{aligned} 
% &\| \{ u^{(j)}\}_n \|_{L^\infty} \leq  \big(\|\Delta (u^{(j)})_n \|_{L^2} + \|\Delta (u^{(j)})_{n-1} \|_{2} \big)\\&\leq \exp\{C_2TM/ \kappa^3\})\big(\|\Delta u_0\|_{2} + \|\Delta u_1 \|_{2} + \big(1+ \frac 14 \|u_0+ 2u_1 + u_2\|_\infty \big)\|\tilde D \nabla u_0\|_{2}\big) \leq r.
% \end{aligned} 
% $$

In the case of the corrected convolution quadrature the additional term is estimated in the same way as in \eqref{corrected_estim_v0}, with $\Delta D u_n$ and $\Delta v_0$ instead of $\nabla Du_n$ and $\nabla v_0$. 

Applying the Laplace operator  to  \eqref{main_discrete_k}  yields 
 \begin{equation}\label{Laplace_eq_k}
 \begin{aligned}
\big(1- 2k d_n\big) D^2 \Delta u_n
- 2k \Delta d_n  D^2 u_n - 4 k \nabla d_n D^2 \nabla u_n
-  \Delta^2 \{u\}_{n}\\
-a\beta\ast_{\tstep} D\Delta^2  u_n 
= 2k\big( Du_n \Delta v_n +  D\Delta u_n  v_n \big)+ 4k \nabla D u_n \nabla v_n .
 \end{aligned} 
 \end{equation}
 Considering $ \scl^{2n} D\Delta u_n$ as a test function in \eqref{Laplace_eq_k}, see Remark~\ref{rem:testing}, we obtain  
 \begin{equation}\label{estim:Laplace_k}
 \begin{aligned} 
\tstep \sum_{n=1}^{N-1} \scl^{2n} \Big[\big\langle  \big(1- 2k d_n\big) D^2 \Delta u_n, D\Delta u_n \big\rangle 
+ \big\langle \Delta \nabla \{u\}_{n}, \Delta \nabla D u_n \big\rangle 
\\
+ a \big\langle  \beta\ast_{\tstep} D \Delta \nabla   u_n,   \Delta \nabla D u_n \big\rangle \Big]
\\
=2k\tstep \sum_{n=1}^{N-1} \scl^{2n}\big\langle  \Delta d_n D^2 u_n + 2 \nabla d_n D^2 \nabla u_n, D\Delta u_n \big\rangle 
\\
+ 2k \tstep \sum_{n=1}^{N-1} \scl^{2n}\big\langle Du_n \Delta v_n + D\Delta u_n  v_n + 2\nabla D u_n \nabla v_n, D\Delta u_n \big\rangle .
 \end{aligned} 
 \end{equation} 
 For the first term in the same way as above we obtain  
 $$
 \begin{aligned} 
& \tstep \sum_{n=1}^{N-1} \scl^{2n}  \big\langle (1- 2kd_n) D^2 \Delta u_n, D\Delta u_n \big\rangle   \geq  k \tstep\int_\Omega \sum_{n = 1}^{N-2} \scl^{2(n+1)} D(\tilde u)_{n} |\tilde D\Delta u_{n}|^2dx
\\
&\qquad  +  \frac12 \int_\Omega \scl^{2(N-1)}\big(1-2k d_{N-1}\big)| \tilde D \Delta u_{N-1}|^2dx 
- \frac12 \int_{\Omega}\big(1-2k d_{1}\big)| \tilde D \Delta u_0 |^2dx.
  \end{aligned}
$$
The second and third terms in \eqref{estim:Laplace_k} are estimates in the same way as above and we have 
 $$
  \begin{aligned} 
  & \tstep \sum_{n=1}^{N-1} \scl^{2n} \Big[\big\langle \Delta \nabla \{u\}_{n}, \Delta \nabla D u_n \big\rangle 
 + \big\langle  \beta \ast_{\tstep} D \Delta \nabla D  u_n,   \Delta \nabla D u_n \big\rangle \Big]
 \\
 & \quad \geq \frac 12 \scl^{2(N-1)}\Big \| \frac{ \Delta \nabla (u_N + u_{N-1})} 2 \Big \|_{L^2}^2 - 
 \frac 12 \Big \| \frac{ \Delta \nabla (u_1 + u_{0})} 2 \Big \|_{L^2}^2
\\
&\qquad + C_\beta \tstep \sum_{n=1}^{N-1} \scl^{2n} \big\|\beta\ast_{\tstep} D \Delta \nabla   u_n \big\|_{L^2}^2.  
  \end{aligned}
 $$
 For the last term on the right-hand side of \eqref{estim:Laplace_k} we obtain
 $$
  \begin{aligned} 
  & \tstep \sum_{n=1}^{N-1} \scl^{2n} \Big | \big\langle Du_n \Delta v_n + 
  D\Delta u_n v_n +  2\nabla D u_n \nabla v_n, D\Delta u_n \big\rangle \Big| 
  \\ 
  & \qquad \leq  C_\Omega \tstep \sum_{n=1}^{N-1} \scl^{2n} \big(\|\Delta v_n\|_{L^2} + 
   \|v_n\|_{L^\infty}+ \|\nabla  v_n\|_{L^4}\big) \|\Delta Du_n\|^2_{L^2}.
  \end{aligned}
 $$
Here we used  estimates  \eqref{H2_Delta} and \eqref{embeding_1}. 
To estimate the first term on the right-hand side of \eqref{estim:Laplace_k} we first use the equation \eqref{main_discrete_k} to write 
$$
D^2 u_n = \frac{1}{(1- 2k d_n)} \Big(\Delta \{ u\}_n +   \beta\ast_{\tstep} D \Delta u_n + 2k Du_n v_n\Big)
$$
and 
$$
\begin{aligned} 
D^2 \nabla u_n = \frac{1}{(1- 2k d_n)} \Big(\Delta \nabla \{ u\}_n +   \beta\ast_{\tstep} D \Delta \nabla u_n + 2k \big(Du_n \nabla  v_n + D\nabla u_n   v_n\big)\Big) 
\\ 
+ 2k \nabla d_n \frac{1}{(1- 2k d_n)^2} \Big(\Delta \{ u\}_n +   \beta \ast_{\tstep} D \Delta u_n + 2k Du_n v_n\Big).
\end{aligned} 
$$
Then, using the Sobolev embedding theorem, yields  
$$
\begin{aligned} 
& \tstep \sum_{n=1}^{N-1} \scl^{2n} 2k \big\langle  \Delta d_n D^2 u_n, D \Delta u_n \big\rangle 
 \\
 & \leq C \tstep \sum_{n=1}^{N-1} \scl^{2n}\frac{ \|\Delta d_n\|_{L^4}}{\big\|1-2kd_n\big\|_{L^\infty} }
 \Big[\|v_n\|_{L^8} \|Du_n\|_{L^8} +  \|\Delta \{ u \}_n\|_{L^4}\Big] \| D \Delta u_n \|_{L^2} 
 \\
 & \quad + \tstep \sum_{n=1}^{N-1}\scl^{2n} \Big [C
 \Big\|\frac{ 1}{1-2kd_n}\Big\|^2_{L^\infty} 
 \|\Delta d_n\|^2_{L^4} \| D \Delta u_n \|_{L^2}^2
 + \varsigma \|\beta\ast_{\tstep} D \Delta  u_n\|^2_{L^4}\Big]
  \\
& \leq 
C \tstep \sum_{n=1}^{N-1}\scl^{2n}  \Big\|\frac{ 1}{1-2k d_n}\Big\|^2_{L^\infty}\Big(\|\Delta d_n\|_{L^4}^2 +  \|v_n \|_{L^8}^2 \Big)
  \| D \Delta u_n \|_{L^2}^2 \\
 & \quad + C\tstep \sum_{n=1}^{N-1}\scl^{2n}  \| \nabla \Delta  \{ u \}_n \|^2_{L^2}
  + \varsigma \tstep \sum_{n=1}^{N-1} \scl^{2n} \|\beta \ast_{\tstep} D \nabla \Delta   u_n\|^2_{L^2}
\end{aligned} 
$$
and 
$$
\begin{aligned} 
&\tstep \sum_{n=1}^{N-1} \scl^{2n}  \big|\langle  \nabla d_n D^2 \nabla u_n, D \Delta u_n \rangle\big|
\leq C \tstep \sum_{n=1}^{N-1}\scl^{2n}\left[ \frac{\|\nabla d_n\|_{L^\infty}}{\big\|1-2k d_n\big\|_{L^\infty}}\Big[ \|\Delta \nabla \{ u\}_n\|_{L^2} 
\right.\\
& \quad +  \|D u_n\|_{L^\infty}
\|\nabla v_n\|_{L^2} 
+  \|D \nabla u_n\|_{L^4}
\|v_n\|_{L^4} +  \| \beta\ast_{\tstep} D \Delta \nabla  u_n\|_{L^2}\Big] \\
&\left.  + \frac{ \|\nabla d_n\|^2_{L^6}}{\big\|1-2k d_n\big\|^2_{L^\infty} } 
 \Big[\|\Delta \{ u\}_n\|_{L^6} +   \|\beta\ast_{\tstep} D \Delta u_n\|_{L^6} +  \|Du_n\|_{L^\infty} \|v_n\|_{L^6}\Big] \right] \hspace{-0.1 cm}
 \|D \Delta u_n \|_{L^2} 
\\
& \leq C_\Omega \tstep \sum_{n=1}^{N-1}\scl^{2n} \Big\|\frac{ 1}{1-2k d_n}\Big\|^2_{L^\infty} \Big(\Big\|\frac{ 1}{1-2k d_n}\Big\|^2_{L^\infty} \|\nabla d_n\|_{L^6}^4 + \|\nabla d_n\|_{L^6}^4 +  \|\nabla d_n\|_{L^\infty}^2\\
& \quad + \|\nabla v_n\|_{L^2}^2 \Big) 
\|D \Delta u_n \|_{L^2}^2  +   \tstep \sum_{n=1}^{N-1} \scl^{2n} \Big( \|\nabla \Delta \{ u\}_n\|^2_{L^2}  +
\varsigma \| \beta \ast_{\tstep} D \nabla \Delta   u_n\|^2_{L^2} \Big).
\end{aligned} 
$$
Here we used \eqref{H2_Delta}, \eqref{Lipschitz_regul},  and
$$
\|\Delta w\|_{L^p} \leq C_\Omega \| \Delta  w\|_{H^1}\leq  C_\Omega \|\nabla \Delta  w\|_{L^2}, \; \text{ for }  1< p \leq 6,
$$
where $\Delta w =0$ on $\partial\Omega$.
Choosing $\varsigma>0$  sufficiently small  yields 
$$
\begin{aligned} 
 &\scl^{2(N-1)} \kappa \|\tilde D \Delta u_{N-1} \|_{L^2}^2 +   \scl^{2(N-1)} \|\Delta \nabla  (u)_{N-1}\|^2_{L^2} 
 \\
 &\quad  +
  C_\beta\tstep \sum_{n=1}^{N-1} \scl^{2n}  
\| \beta\ast_{\tstep} D \Delta \nabla  u_n\|^2_{L^2}
 \leq  \nu  \|\tilde D \Delta u_0 \|_{L^2}^2 + \| \Delta \nabla (u)_0 \|_{L^2}^2
 \\
 & \qquad + C_\Omega \tstep \sum_{n=1}^{N-1} \scl^{2n}
\Big(\|\Delta \nabla \{u\}_n \|_{L^2}^2  + \Big\|\frac{ 1}{1-2kd_n}\Big\|^2_{L^\infty}  \Big[\|\Delta v_n \|_{L^2}^2 + \|\nabla \Delta d_n \|_{L^2}^2  \\
& \qquad \qquad + \Big(1+ \Big\|\frac{ 1}{1-2kd_n}\Big\|^2_{L^\infty}\Big) \|\Delta d_n \|_{L^2}^4 \Big] \|D \Delta u_n \|_{L^2}^2 \Big).
\end{aligned}
$$
Considering $T$ such that 
$$
\begin{aligned}
 \Big[\nu\|\tilde D \Delta u_0 \|_{L^2}^2 + \| \Delta \nabla (u)_0  \|_{L^2}^2 \Big]\exp\Big\{C_\Omega T \max\Big\{1, \frac{R^2 + R^4}{\kappa^3} + \frac{R^4}{\kappa^5}\Big\}\Big\} \\
 \leq \min_n \scl^{2(n-1)} R^2,
 \end{aligned}
$$
and using the discrete Gr\"onwall inequality,  we obtain 
$$
\scl^{2n}\kappa \|\tilde D  \Delta u_n \|_{L^2}^2 +   \scl^{2n} \|\Delta \nabla  (u)_n\|_{L^2}^2 \leq \scl^{2n}R^2 \quad \text{ for all } \; n=1, \ldots, N-1,
$$ 
and then also 
$$
 \tstep \sum_{n=1}^{N-1} \scl^{2n}  
\| \beta\ast_{\tstep} D \Delta \nabla  u_n\|^2_{L^2} \leq C.
$$

In the case of the corrected convolution quadrature the additional term is estimated in the same way as in \eqref{corrected_estim_v0}, with $\nabla \Delta D u_n$ and $\nabla \Delta v_0$ instead of $\nabla Du_n$ and $\nabla v_0$.

As next we have to show the contraction property for the map $\tilde u_n \to u_n$. 
Considering the equation for $w_n = u^{1}_n -  u^{2}_n$, where $u^j_n$ satisfies \eqref{main_discrete_k} for $\tilde u_n^j$, with $j=1,2$, taking $-\scl^{2n} \Delta  Dw_n$ as a test function and summing over $n$, yields
\begin{equation} 
 \begin{aligned} 
 & \tstep \sum_{n=1}^{N-1} \scl^{2n} \Big[\big\langle  (1- 2k  d_n^1) D^2 \nabla w_n, D\nabla w_n \big\rangle + \big\langle \Delta  \{w\}_{n}, \Delta  D w_n \big\rangle 
\\
&\qquad  +  \big\langle a \beta \ast_{\tstep} D \Delta    w_n,   D \Delta  w_n \big\rangle \Big]
\\
& =2k\tstep \sum_{n=1}^{N-1}\Big[ \scl^{2n}\big\langle D w_n  \nabla  v_n^1 
+ D\nabla w_n  v_n^1 +  \nabla d_n^1 D^2 w_n, D\nabla w_n \big\rangle  \\
&\qquad  +  \scl^{2n}\big\langle (d^1_n - d^2_n)
D^2 \nabla u^2_n + \nabla (d^1_n - d^2_n)
D^2 u^2_n, D\nabla w_n \big\rangle
\\
& \qquad + \scl^{2n}\big\langle (v_n^1- v_n^2)D \nabla u^2_n + \nabla(v_n^1- v_n^2)D  u^2_n, D\nabla w_n \big\rangle \Big],
 \end{aligned}
\end{equation}
where $d_n^j = \{\tilde u^j\}_n$   and $v_n^j = D\tilde u^j_n$ for $j=1,2$.
Performing estimates similar as above we obtain 
\begin{equation*} 
 \begin{aligned} 
&\rho^{2(N-1)} \kappa\| \nabla \tilde D(u^1_{N-1} - u^2_{N-1})\|^2_{L^2} + \rho^{2(N-1)}\| \Delta \big(u^1 - u^2\big)_{N-1}\|^2_{L^2} 
\\
& \quad + (2C_\beta- \varsigma)\tstep \sum_{n=1}^{N-1} \scl^{2n} \|\beta \ast_{\tstep} D \Delta \left( u_n^1 - u_n^2\right) \|_{L^2}^2
\\
&\leq C_\Omega\tstep \sum_{n=1}^{N-1} \scl^{2n} \big[ \|D \Delta u_n^2\|^2_{L^2} +\|D^2 \nabla u_n^2\|_{L^2}\big] \Big[ \|\Delta (d^1_n - d^2_n) \|_{L^2}^2
+ \|\nabla(v_n^1- v_n^2)\|_{L^2}^2 \Big] \\
%&\quad +C_\Omega \tstep \sum_{n=1}^{N-1} \scl^{2n} \Big( + \|\Delta  d_n^1\|_{H^1} + \|\Delta d_n^1\|_{H^1}^2 \Big) \|\tilde D\nabla (u^1_{n} - u^2_{n})\|^2_{L^2}
%\\
&\quad +
 C_\Omega\tstep \sum_{n=1}^{N-1} \scl^{2n} \Big(\| \Delta \big(u^1 - u^2\big)_n\|^2_{L^2} + \Big[1+ \|\Delta v_n^1\|_{L^2}+ \frac 1 { \kappa} \|\nabla d_n^1\|_{L^\infty}\|\nabla v_n^1\|_{L^2} \\
 & \qquad \qquad \quad + \frac 1 {\kappa^2} \|\nabla d_n^1\|^2_{L^\infty} \big(1+ \|D^2 \nabla u_n^2\|_{L^2}\big) + \|D^2 \nabla u_n^2\|_{L^2} \Big]\|\nabla \tilde D (u^1_n - u^2_n)\|^2_{L^2}\Big)  \\
 &\quad  + C\tstep \sum_{n=1}^{N-2} \scl^{2(n+1)} \|D(\tilde u^1)_n \|_{L^\infty}\|\nabla \tilde D (u^1_n - u^2_n)\|^2_{L^2}.
 \end{aligned}
\end{equation*}
Applying the discrete  Gr\"onwall inequality  we obtain the contraction property for an appropriate $\tau = N \tstep$. Iterating over the time interval we obtain that there exists a fixed point of the map given by equation~\eqref{main_discrete_k} and the corresponding stability conditions. 
\end{proof}

\section{Error estimate} 
As next we derive the error estimates for the time-discretization scheme \eqref{main_discrete_k}. 

\subsection{Expected smoothness at $t = 0$}\label{sec:reg} 

Consider first the linear equation
\begin{equation}\label{eq:lin1}
\frac1{c^2}\partial_t^2 u -\Delta u -a\beta\ast  \Delta \partial_t u = f,
\end{equation}
with smooth $f$.  The choice of  $\beta$ (either $\betaA$ or $\betaB$) implies that for $k \in \mathbb{N}_0$
\[
\int_0^t \beta(t-\tau) \tau^k d\tau \sim \frac1{\Gamma(\mu)}\int_0^t (t-\tau)^{\mu-1}\tau^kd\tau = \frac{\Gamma(k+1)}{\Gamma(k+1+\mu)} t^{k+\mu} \qquad \text{as } t \to 0^+.
\]
Considering similar arguments as in \cite[Remark~2.10]{Baker_Banjai}, we thus expect that the behaviour at $t =0$ of the solution to the linear problem to be given by 
{
\[
  \partial_t^2u (t) = c^2\left(f +\Delta u_0+\frac{1}{\Gamma(1+\mu)} t^{\mu} \Delta v_0 +\o(t^\mu)\right).
\]
}

In the nonlinear case, as long as no singularity develops and   $2ku < 1$ continues  to hold, 
we expect the singularity at $t = 0$ to be of the same type
\[
  \partial_t^2u (t) \sim w_0+z_0 t^{\mu} + \o(t^\mu), 
\]
where $\mu \in (0,1)$. This motivates the following assumption on the smoothness of the solution that will allow us to develop realistic error estimates.

\begin{assumption} \label{ass:reg_ass}
Assume that 
\[
u \in C^2([0,T]; H^2(\Omega)) \cap  C^3((0,T]; H^2(\Omega)) \cap  C^4((0,T]; L^2(\Omega))
\]
and that there exist constants $c_k \geq 0$, for $k = 1,\dots, 4$, such that
 \begin{equation*}
 \begin{aligned}
   & \|\partial_t^{k}u\|_{H^2(\Omega)} \leq C (1+c_k t^{2+\mu-k}) \qquad && \text{ for  } \; t \in (0, T] \; \text{ and } \; k=1,2,3,  \\
  & \|\partial_t^{4}u\|_{L^2(\Omega)} \leq C (1+c_4 t^{\mu-2})
   \qquad && \text{ for  } \; t \in (0, T].
   \end{aligned}
 \end{equation*}
\end{assumption}

In what follows, for $u \in C[0,T]$ we shall use  the following  notation 
$$
\begin{aligned} 
& Du(t_n) = \frac{ u(t_{n+1}) - u(t_{n-1})}{2 \tstep}, \quad && \tilde D u(t_n) = \frac{u(t_{n+1}) - u(t_n)}{\tstep}, \\
&D^2 u(t_n) = \frac{ u(t_{n+1})
-2 u(t_n) + u(t_{n-1})}{ \tstep^2}, && \text{ for } \;\;  t_n \in [\tstep,T-\tstep],  \\
&\{u\}(t_n) = \frac14\left( u(t_{n+1})
+2 u(t_n) + u(t_{n-1})\right), && \text{ for } \;\;  t_n \in [\tstep,T-\tstep].
\end{aligned} 
$$

We will require the following lemma proved in \cite{Baker_Banjai} for $\beta = \betaA$ and $r = 0$. 
\begin{lemma}\label{lemma_error_beta}
For $u \in C^3(0,T]$ and any $t_n \in [\tstep,T]$
  \begin{enumerate}[(a)]
  \item  
\[
  \begin{split}      
\left|\beta \ast \partial_t u(t_n)-\beta \ast_\tstep D u(t_n)\right| &\leq C \bigg\{ t_n^{\mu-1} \partial_tu(\tstep)\tstep\\
&+\left. \tstep^2\left(\partial_t^2u(\tstep) t_n^{\mu-1}+\int_{\tstep}^{t_n}(t_n-\tau)^{\mu-1}|\partial_t^3 u(\tau)|d\tau\right)  \right\}.
\end{split}
\]  
\item 
\[
\left|\beta \ast \partial_t u(t_n)-\beta \tilde\ast_\tstep D u(t_n)\right| \leq C
\tstep^2\left(\partial_t^2u(\tstep) t_n^{\mu-1}+\int_{\tstep}^{t_n}(t_n-\tau)^{\mu-1}|\partial_t^3 u(\tau)|d\tau\right).
\]  
  \end{enumerate}
\end{lemma}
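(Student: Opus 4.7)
The plan is to follow the strategy of \cite{Baker_Banjai}, where the analogous estimate is established for $r=0$; the case $r>0$ in $\betaA$ requires no change, since $\hat\beta_{\mathrm A}(z)=(z+r)^{-\mu}$ is still analytic and sectorially bounded in the same Hardy class that underlies the CQ analysis, so both the polynomial CQ bound \eqref{eq:CQ_poly_error} and the standard BDF2-CQ consistency estimate carry over verbatim. The core idea is a linearity decomposition of $u$ that isolates the non-smooth piece at $t=0$ by a degree-two Taylor polynomial.

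Concretely, I would set
\[
u(t) = \pi(t) + r(t),\qquad \pi(t):= u(\tstep)+\partial_t u(\tstep)(t-\tstep)+\tfrac12\partial_t^2 u(\tstep)(t-\tstep)^2,
\]
so that $r\in C^3(\tstep,T]$ with $r(\tstep)=\dot r(\tstep)=\ddot r(\tstep)=0$. Since the centred difference $D$ is exact on polynomials of degree at most two, $D\pi(t_j)=\partial_t\pi(t_j)$ for all $j\geq 1$, and by linearity of both $\beta\ast(\cdot)$ and $\beta\ast_{\tstep}(\cdot)$ the total error splits as
\[
\beta\ast\partial_t u(t_n)-\beta\ast_\tstep Du(t_n) = \bigl[\beta\ast\partial_t\pi-\beta\ast_\tstep D\pi\bigr](t_n)+\bigl[\beta\ast\dot r-\beta\ast_\tstep Dr\bigr](t_n).
\]
For the polynomial contribution, writing $\partial_t\pi$ as a combination of $1$ and $(t-\tstep)$ and applying \eqref{eq:CQ_poly_error} with $\alpha=0$ and $\alpha=1$ produces the first two terms of the bound in (a); the boundary mismatch $Du_0=v_0\neq\partial_t\pi(0)$ is absorbed via $|v_0-\partial_t u(\tstep)|\leq\int_0^\tstep|\partial_t^2 u|\,d\tau$ and is dominated by the same two terms.

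For the smooth remainder $r$, I would prove the BDF2-CQ consistency bound
\[
\bigl|[\beta\ast\dot r-\beta\ast_\tstep Dr](t_n)\bigr|\leq C\tstep^2\int_\tstep^{t_n}(t_n-\tau)^{\mu-1}|\partial_t^3 r(\tau)|\,d\tau
\]
by the usual generating-function representation of the CQ error combined with a contour/Plancherel argument, noting that $\partial_t^3 r=\partial_t^3 u$ on $(\tstep,T]$. This yields the remaining integral term in~(a). For part (b), the correction weight $\omega_{n,0}$ is chosen so that $\beta\tilde\ast_\tstep 1=\beta\ast 1$ pointwise, hence the $\alpha=0$ piece of the polynomial analysis above contributes zero; the $\alpha=1$ and remainder contributions are unaffected, producing precisely the bound in~(b).

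The main obstacle is obtaining the \emph{sharp} kernel $(t_n-\tau)^{\mu-1}$ inside the remainder integral rather than the cruder $t_n^{\mu-1}$ factor that would result from an $L^\infty$-type estimate; this requires the full contour representation of the CQ error and a careful use of the sectorial bound on $\hat\beta$. Careful bookkeeping of the single-step mismatch at $t_0$ is also needed so that it does not leak an extra $\O(\tstep^\mu)$ term beyond what is stated; both points are handled in \cite{Baker_Banjai} by exploiting the composite structure of BDF2 CQ and the algebraic decay of $\hat\beta$.
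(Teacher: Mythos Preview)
Your proposal is correct and follows essentially the same route as the paper. The paper's own proof simply cites \cite[Lemmas~4.2, 4.4, 4.5]{Baker_Banjai} for the case $r=0$ and remarks that extending \cite[Lemma~4.2]{Baker_Banjai} to $r>0$ is straightforward, after which the remaining steps go through unchanged; your Taylor-splitting at $t=\tstep$, application of \eqref{eq:CQ_poly_error} for $\alpha=0,1$, and the $C^3$-remainder contour estimate are precisely the content of those cited lemmas, and your observation that the sectorial bound on $\hat\beta$ persists for $r>0$ (and likewise for $\betaB$) is exactly the ``straightforward extension'' the paper invokes.
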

\begin{proof}
  The result for $\beta =\betaA$ and $r = 0$, i.e., $\beta = \frac1{\Gamma(\mu)} t^{\mu-1}$, is shown in \cite[Lemma~4.4]{Baker_Banjai} and \cite[Lemma~4.5]{Baker_Banjai} respectively. Looking closely at the proof as well as   \cite[Lemma~4.2]{Baker_Banjai} and \cite[Theorem~2.1]{Lubrev}, it can be seen that the only property of the kernel used is $|\hat\beta(z)|\leq C |z|^{-\mu}$, $z \notin \mathbb{C} \setminus (-\infty,0]$, which holds for both $\betaA$ and $\betaB$; see \eqref{eq:hatbeta}.
\end{proof}

\begin{theorem}\label{thm:conv}
Let $u$ be the solution of \eqref{main_discrete} satisfying  Assumption~\ref{ass:reg_ass} and with initial data satisfying the conditions of Lemma~\ref{lem:disc_bounds}. Let $u_n$ be the solution of the semi-discrete scheme. If the uncorrected CQ is used, the error $w_n = u_n - u(t_n)$  satisfies the estimate
 \begin{equation}
   \sup\limits_{1\leq n \leq N-1}\| \tilde Dw_n\|_{L^2(\Omega)}
   + \sup\limits_{1\leq n \leq N-1} \| \nabla ( w)_n\|_{L^2(\Omega)} = \O(\tstep).
 \end{equation}
If the corrected CQ is used the error bound becomes
 \begin{equation}
   \sup\limits_{1\leq n \leq N-1}\| \tilde Dw_n\|_{L^2(\Omega)}
   + \sup\limits_{1\leq n \leq N-1} \| \nabla ( w)_n\|_{L^2(\Omega)} = \O(\tstep^{1+\mu}).
 \end{equation}
\end{theorem}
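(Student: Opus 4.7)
My plan is to mimic the energy argument of Lemma~\ref{lem:disc_bounds}, but applied to the error variable $w_n = u_n - \uex_n$ where $\uex_n := u(t_n)$. First I would evaluate \eqref{main_integro} at the nodes and rewrite it in the format of \eqref{main_discrete} by introducing a consistency remainder $R_n$,
\[
(1-2k\{\uex\}_n)D^2 \uex_n - \Delta\{\uex\}_n - a\beta\ast_\tstep D\Delta \uex_n = 2k(D\uex_n)^2 + R_n.
\]
Subtracting from \eqref{main_discrete} and linearising the two quadratic differences around the exact solution yields
\[
(1-2k\{u\}_n)D^2 w_n - \Delta\{w\}_n - a\beta\ast_\tstep D\Delta w_n = 2k(Du_n + D\uex_n)Dw_n + 2k\{w\}_n D^2 \uex_n - R_n.
\]
The remainder $R_n$ splits into three pieces: the trapezoidal consistency errors for $\partial_t^2 u$ and $\Delta u$, the CQ consistency error bounded in Lemma~\ref{lemma_error_beta}, and Taylor remainders coming from the nonlinearity.

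Next I would test the error equation with $\scl^{2n}Dw_n$ (with $\scl = e^{-\tstep/T}$ if needed to close the Gronwall loop), integrate in space and sum in $n$, reproducing the manipulations of Lemma~\ref{lem:disc_bounds}. The $(1-2k\{u\}_n)D^2 w_n$ term telescopes into a discrete energy of $\tilde D w_n$ weighted by $1-2k\{u\}_n \geq \kappa$ thanks to Lemma~\ref{lem:disc_bounds}; the $-\Delta\{w\}_n$ term telescopes into a discrete $H^1$-energy of $\{w\}_n$; and the CQ term contributes a non-negative quantity via \eqref{eq:CQ_pos}, the corrected weight $\omega_{n,0}$ being absorbed exactly as in \eqref{corrected_estim_v0}. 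The linear-in-$w$ right-hand side $(Du_n + D\uex_n)Dw_n + \{w\}_n D^2 \uex_n$ is controlled by combining the a-priori bounds of Lemma~\ref{lem:disc_bounds} with Assumption~\ref{ass:reg_ass}, and therefore contributes to the exponential prefactor of a discrete Gronwall inequality.

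The remaining forcing $\tstep\sum_{n=1}^{N-1}\scl^{2n}\langle R_n, Dw_n\rangle$ is treated by Cauchy--Schwarz and Young's inequality, reducing the task to bounding $\tstep\sum_n\|R_n\|_{L^2}^2$. The trapezoidal part of $R_n$ behaves like $\tstep^2 t_n^{\mu-2}$ under Assumption~\ref{ass:reg_ass}, whose weighted $\ell^2_\tstep$-sum is $\O(\tstep^{1+2\mu})$ after careful treatment of the singular endpoint, and therefore is always dominated by the CQ contribution. For the uncorrected CQ, Lemma~\ref{lemma_error_beta}(a) gives $\|R_n^{\mathrm{CQ}}\|_{L^2} \lesssim \tstep\, t_n^{\mu-1}$ (using that $\partial_t u(\tstep)$ is bounded under Assumption~\ref{ass:reg_ass}), whence $\tstep\sum_n\|R_n\|_{L^2}^2 = \O(\tstep)$. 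For the corrected CQ, Lemma~\ref{lemma_error_beta}(b) provides an additional factor of $\tstep$, and the singular endpoint then yields exactly $\O(\tstep^{1+\mu})$; the non-singular convolution tail $\int_\tstep^{t_n}(t_n-\tau)^{\mu-1}|\partial_t^3 u(\tau)|\,\df\tau$ is a Beta-type integral subsumed by the same bound.

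The main obstacle, in my view, is the nonlinear cross term $2k\{w\}_n D^2 \uex_n$: Assumption~\ref{ass:reg_ass} controls only $\|\partial_t^2 u\|_{L^2(\Omega)}$ rather than pointwise in $x$, so one has to invoke \eqref{main_integro} to express $\partial_t^2 \uex$ through $\Delta u$ and $\beta\ast\partial_t\Delta u$, and then call on the higher spatial regularity guaranteed by Theorem~\ref{thm:main_cont} to extract an $L^\infty(\Omega)$ bound. Combined with the initial bound $\|\tilde Dw_0\|_{L^2} = \O(\tstep^{1+\mu})$, which follows from the initialisation \eqref{eq:initial} together with a Taylor expansion of $u$ at $t=0$ under Assumption~\ref{ass:reg_ass}, the discrete Gronwall inequality then yields both claimed rates, the $\O(\tstep)$ for the uncorrected scheme and the sharper $\O(\tstep^{1+\mu})$ for the corrected one.
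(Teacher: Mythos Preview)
Your proposal is essentially the paper's own proof: form the error equation, test with $Dw_n$, telescope the energies, invoke CQ positivity \eqref{eq:CQ_pos}, bound the consistency remainders via Lemma~\ref{lemma_error_beta} and Assumption~\ref{ass:reg_ass}, and close with discrete Gronwall. Two technical choices differ from the paper and are worth noting.

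First, the paper controls the forcing via the $\ell^1$-in-time norm, writing $\tstep\sum_n\langle R_n,Dw_n\rangle \leq C\big(\tstep\sum_n\|R_n\|_{L^2}\big)^2 + \varsigma\sup_n\|Dw_n\|_{L^2}^2$, rather than your $\ell^2$-sum $\tstep\sum_n\|R_n\|_{L^2}^2$. For the singular terms $\|R_n\|\sim\tstep t_n^{\mu-1}$ (uncorrected) and $\sim\tstep^2 t_n^{\mu-2}$ (trapezoidal), the $\ell^1$ route gives $\O(\tstep^2)$ and $\O(\tstep^{2+2\mu})$ respectively, while your $\ell^2$ route gives only $\O(\tstep^{\min(2,1+2\mu)})$ and $\O(\tstep^{1+2\mu})$; both suffice for the theorem as stated, but $\ell^1$ is sharper.

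Second, for the cross term $\langle\{w\}_nD^2\uex_n,Dw_n\rangle$ the paper uses $L^4$--$L^4$--$L^2$ H\"older together with $H^1\hookrightarrow L^4$, needing only $\partial_t^2 u\in L^\infty(0,T;H^1)$; this works uniformly in $d\leq 3$ and avoids the detour through the PDE that you propose to reach an $L^\infty(\Omega)$ bound (which would require an extra derivative in $d=3$). Finally, your concern about the correction weight is moot: since $Dw_0=0$, the corrected and uncorrected CQ coincide on the error sequence, as the paper remarks.
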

\begin{proof} 
Consider the difference between the solution and approximation denoted by $w_n = u_n - u(t_n)$  to obtain
\begin{equation} \label{error_1}
\begin{aligned}
  \big(1-2k\{u\}_n\big)D^2 w_n
 -\Delta \{w_n\}_{n}    -\beta \ast_{\tstep}\Delta Dw_n  = 2k D w_n\big(Du_n + D u(t_n)\big) \\
 \quad + 2k\{w\}_nD^2u(t_n)+
 \varepsilon_n  + \sigma_n +  \delta_n + \kappa_n + \theta_n,
 \end{aligned} 
\end{equation}
where 
$$
\begin{aligned} 
\varepsilon_n &= (1- 2k u(t_n))\big( D^2  u(t_n) -\partial_t^2 u(t_n) \big)\\
\sigma_n &= 2k\big(\{u\}(t_n) -  u(t_n)\big) D^2 u(t_n), \\
\delta_n &= \beta\ast_\tstep \Delta D u(t_n) - \beta \ast \Delta \partial_t u(t_n),\\
\kappa_n & = \Delta \big(\{u\}(t_n) - u(t_n)\big), \\
\theta_n & = 2k [(Du(t_n))^2 - (\partial_t u(t_n))^2]. 
\end{aligned} 
$$
Considering $Dw_n$ as a test function in a weak formulation of \eqref{error_1} and summing  over $n=1, \ldots, N-1$, yield 
\begin{equation} \label{error_2}
\begin{aligned}
 & \tstep \sum_{n=1}^{N-1}\Big[\big\langle \big(1-2k\{u\}_n\big)D^2 w_n, Dw_n \big\rangle
 +\big \langle \nabla \{w_n\}_{n}, \nabla D w_n \big\rangle  \Big]\\
 & + 
 \tstep \hspace{-0.05 cm } \sum_{n=1}^{N-1}\hspace{-0.05 cm }  a \big\langle \beta \ast_{\tstep} \nabla Dw_n, \nabla Dw_n \big\rangle \hspace{-0.05 cm }
= 2k \tstep \hspace{-0.05 cm } \sum_{n=1}^{N-1} \hspace{-0.05 cm } \big\langle  D w_n\big[Du_n + D u(t_n)\big], Dw_n \big\rangle  \\
& +2k \tstep\sum_{n=1}^{N-1} \hspace{-0.05 cm } \big\langle  \{w\}_nD^2u(t_n), Dw_n \big\rangle + 
\tstep \hspace{-0.05 cm } \sum_{n=1}^{N-1}\hspace{-0.05 cm } \big\langle\varepsilon_n  + \sigma_n +  \delta_n + \kappa_n + \theta_n, Dw_n \big\rangle.
 \end{aligned} 
\end{equation}
Using the boundedness of $Du_n$ and $Du(t_n)$ we obtain 
$$
\big\langle  D w_n\big(Du_n + D u(t_n)\big), Dw_n \big\rangle \leq C \big(\|Du_n\|_{L^\infty} + \|\partial_t u\|_{L^\infty}\big) \|Dw_n\|^2_{L^2}. 
$$
Similarly using that $D^2u(t_n) \in L^4(\Omega)$, together with the Sobolev embedding inequality,  we have
\begin{equation*}
\begin{aligned} 
  \big\langle  \{w\}_nD^2u(t_n), Dw_n \big\rangle & \leq C\|D^2u(t_n)\|_{L^4}\left(\|\{w\}_n\|_{L^4}^2+\|Dw_n\|_{L^2}^2\right)\\
 & \leq C \|\partial_t^2 u\|_{L^4}\left(\|\nabla (w)_n\|_{L^2}^2+ \|\nabla (w)_{n-1}\|_{L^2}^2+\|Dw_n\|_{L^2}^2\right) .
  \end{aligned} 
\end{equation*}
The last term in \eqref{error_2} we can write as 
$$
\begin{aligned} 
 \tstep \hspace{-0.05 cm } \sum_{n=1}^{N-1} \hspace{-0.05 cm } \Big|\big\langle\varepsilon_n  + \sigma_n +  \delta_n + \kappa_n + \theta_n, Dw_n \big\rangle\Big| \leq 
 C \Big[ \Big(\tstep \hspace{-0.05 cm } \sum_{n=1}^{N-1} \hspace{-0.1 cm }\|\varepsilon_n\|_{L^2} \Big)^2 
 + \Big(\tstep \hspace{-0.05 cm } \sum_{n=1}^{N-1} \hspace{-0.1 cm } \|\sigma_n\|_{L^2} \Big)^2  \\
 + 
 \Big(\tstep \hspace{-0.05 cm }\sum_{n=1}^{N-1}\hspace{-0.1 cm }\|\delta_n\|_{L^2} \Big)^2  + \Big(\tstep \hspace{-0.05 cm } \sum_{n=1}^{N-1}\hspace{-0.1 cm } \|\kappa_n\|_{L^2} \Big)^2 
 + \Big(\tstep \hspace{-0.05 cm } \sum_{n=1}^{N-1}\hspace{-0.1 cm } \|\theta_n\|_{L^2}\Big)^2 \Big]  + \varsigma \hspace{-0.1 cm } \sup_{1 \leq n \leq N-1} \hspace{-0.15 cm } \|Dw_n\|_{L^2}^2.
\end{aligned}
$$
Assumption~\ref{ass:reg_ass} allows us to bound the perturbation terms as in \cite{Baker_Banjai} by
\begin{equation*} %\label{estim_remind_1} 
 \begin{aligned}
 \tstep\sum_{n=1}^{N-1}  \|\varepsilon_n \|_{L^2} & = \tstep\sum_{n=1}^{N-1} \big\|  (1- 2k u(t_n))\big[ D^2  u(t_n) -\partial_t^2 u(t_n) \big] \big\|_{L^2} \\
 \leq
C  \tstep &\int_0^{2\tstep} \hspace{-0.2 cm } \| \partial_t^3 u \|_{L^2} dt  + C  \tstep^2 \int_\tstep^{T} \|\partial_t^4 u\|_{L^2} dt  \leq C \tstep^2(1+ (c_3+c_4) \tstep^{\mu-1}),  \\ 
\tstep\sum_{n=1}^{N-1}  \|\sigma_n \|_{L^2} & =  \tstep\sum_{n=1}^{N-1}  \big\|\big[\{u\}(t_n) -  u(t_n)\big] D^2 u(t_n)\big\|_{L^2} = \tstep\sum_{n=1}^{N-1} \frac{\tstep^2}{4} \|D^2 u(t_n)\|^2_{L^4} \\
&  \leq C \tstep^2  \|\partial_t^2 u\|^2_{L^2(0,T; L^4(\Omega))}  \leq C \tstep^2,
 \end{aligned}
\end{equation*} 
where we used that $\partial_t^2 u \in L^2(0,T;H^1(\Omega))$.
Using the assumption that $\|\Delta \partial_t^2 u \|_{L^1(0,T; L^2(\Omega))}$ is bounded yields
\begin{equation*} %\label{estim_remind_2} 
\begin{aligned}
 \tstep\sum_{n=1}^{N-1}  \| \kappa_n\|_{L^2} &=\tstep \sum_{n=1}^{N-1} \|\Delta (\{u\}(t_n) - u(t_n)) \|_{L^2} \\
 &\leq
 C \tstep^2  \|\Delta \partial_t^2 u \|_{L^1(0,T; L^2(\Omega))}
 \leq C \tstep^2.
 \end{aligned}
\end{equation*}
Notice that estimate for $\|\Delta \partial_t^2 u \|_{L^2((0,T)\times\Omega)}$  can be shown for initial data $u_0 \in H^4(\Omega)$, $v_0 \in H^3(\Omega)$ in the similar way as the estimate for $\|\nabla \partial_t^2 u \|_{L^2((0,T)\times \Omega)}$ in Lemma~\ref{lem_apriori}.

To bound the  CQ approximation error we use  Lemma~\ref{lemma_error_beta} and \cite[Lemma~4.1]{Baker_Banjai} to conclude
\begin{equation}
 \tstep\sum_{n=1}^{N-1}  \| \delta_n\|_{L^2} = \tstep \sum_{n=1}^{N-1} \big\|\Delta \big(\beta \ast_\tstep  D u(t_n) - \beta \ast \partial_t u(t_n)\big) \big\|_{L^2} \leq C\tstep. %\leq C_1 \tstep \sum_{n=1}^{N-1}
%  \tstep^2 \|\Delta \partial^2_t u(\tstep)\|_{L^2} t_n^{1-\gamma} \\
% & +  C_2 \tstep \sum_{n=1}^{N-1} \tstep^2 \int_\tstep^{t_n} (t_n - \tau)^{1-\gamma}  \|\Delta \partial_t^3 u(\tau) \|_{L^2} d\tau  \leq C \tstep^2.
%  \end{aligned}
\end{equation}
Instead if we are using the correction, then
\begin{equation}
 \tstep\sum_{n=1}^{N-1}  \| \delta_n\|_{L^2} = \tstep \sum_{n=1}^{N-1} \big\|\Delta  \big(\beta \tilde\ast_\tstep  D u(t_n) - \beta \ast \partial_t u(t_n)\big) \big\|_{L^2} \leq  C\tstep^2.
\end{equation}
The last error term is estimated as 
\begin{equation} \label{estim_remind_3} 
\begin{aligned} 
 & \tstep\sum_{n=1}^{N-1} \|\theta_n\|_{L^2} = 
2k \tstep\sum_{n=1}^{N-1} 
 \|(Du(t_n))^2 - (\partial_t u(t_n))^2\|_{L^2}
 \\
 & \leq C \tstep\sum_{n=1}^{N-1} \|Du(t_n)- \partial_t u(t_n)\|_{L^2} 
 \big(\|\partial_t u(t_n)\|_{L^\infty} + \|Du(t_n)\|_{L^\infty}\big) 
 \\
 & \leq C \tstep\sum_{n=1}^{N-1} \tstep^2 \left(1+c_3\int_{0}^T t^{\mu-1} dt\right) \leq C \tstep^2.
\end{aligned}
\end{equation}
 Hence combining all estimates and applying Gr\"onwall inequality  yields 
 \begin{equation} \label{estim_comb} 
\begin{aligned} 
&\kappa \sup_{1\leq n \leq N-1}\|\tilde Dw_{n}\|^2_{L^2} + \sup_{1\leq n \leq N-1}  \big\|\nabla (w)_{n} \big\|^2_{L^2}
%+ \tstep \sum_{n=1}^{N-1} \|\partial_{t,\tstep}^{\gamma-1} D \nabla w_n \|_2^2
\leq
 \Big(\big\|\nabla (w)_0 \big\|_{L^2}^2 \\
 & +  (1+2k C_\Omega b) \|\tilde Dw_{0}\|^2_{L^2}
+ C\tstep^4\big[1+ 
(c_3+ c_4) \tstep^{2(\mu-1)}\big]+ C_1\tstep^2 \Big)\times 
\\
&\times \exp\big\{C \big(\|\partial_t u \|_{L^\infty}+\|\partial_t^2 u \|_{L^4} + \sup_{1\leq n \leq N-1}\|Du_n\|_{L^\infty}\big)\big\},
\end{aligned}
\end{equation}
where $0<\kappa \leq 1 - 2k\|u\|_{L^\infty}$, the constant $C$ may depend on the final time $T$,  and $C_1 =0$ when using  the corrected CQ scheme.

As $w_0 = 0$, it remains to estimate $\|\nabla w_1\|$ and $\|\tilde Dw_0\|_{L^2} = \tstep^{-1}\|u_1-u(t_1)\|_{L^2}$. The choice of $u_1$ in \eqref{eq:initial}, Taylor expansion and Assumption~\ref{ass:reg_ass} ensure that the two terms are of size  $\O(\tstep^{2+\mu})$ and $\O(\tstep^{1+\mu})$ respectively.
\end{proof}

\begin{remark}
 Notice that $Dw_0=0$, thus we do not have any additional contribution for corrected CQ.
\end{remark}

\begin{remark}
  In Theorem~\ref{thm:conv} we obtained the estimate in the natural discrete energy norm. If instead of testing by $Dw_n$, we test with $\tstep \sum_{j = n}^{N-1} (w)_j$ we can  obtain an estimate on 
\[
\|w_N\|^2_{L^2} + \left\|\tstep \sum_{n = 1}^{N-1} \nabla (w)_n\right\|^2_{L^2}.
\]
In doing this, lower regularity assumption in space of the solution could be made in Assumption~\ref{ass:reg_ass}, namely $H^1(\Omega)$ instead of $H^2(\Omega)$.
\end{remark}

\section{Numerical Experiments}
\label{sec:numerics}

Coupling the time discretization \eqref{main_discrete} with the piecewise linear Galerkin finite element space discretization  we obtain a fully discrete scheme. Denoting by $V^h\subset H^1_0(\Omega)$ the space of piecewise linear finite element functions we have that the fully discrete solution $u^h_n \in V^h$ satisfies
\begin{equation}
    \label{equ:full_disc}
    \begin{split}      
    \big\langle \big( 1-2 k \{u^h\}_n \big) D^2 u_n^h ,v  \big\rangle + \big\langle \nabla\{  u^h \}_n,\nabla v \big\rangle &+ a \big\langle \beta \ast_{\tstep} D \nabla  u_n^h , \nabla v \big\rangle\\ &= 2k \big\langle \big( D u_n^h \big)^2 , v \big\rangle ,
    \end{split}
\end{equation}
for all $v\in V^h$, $n = 1,\dots$. The initial data is set to
\[
u_0^h = P_hu_0, \qquad u_1^h = u_0^h+\tstep P_hv_0 + \frac12 \tstep^2 P_h \partial_t^2u(0),
\]
where $P_h \colon L^2(\Omega) \rightarrow V_h$ is the $L^2$ orthogonal projection and $P_h\partial_t^2u(0)$ is obtained from \eqref{eq:dttu0}. %We remind the reader that we implement BDF2-based convolution quadrature in our experiments.
Throughout the numerical experiments we set $\beta = \betaA$.

\subsection{1D Experiments}
\label{subsec:1Dnum}

We first report on a series of experiments in 1D. To solve \eqref{equ:full_disc} at each time step for  $u_{n+1}$ we  use a Newton iteration as described in~\cite[Chapter 7.1.2]{Baker2022}. In space we use a uniform mesh with spatial meshwidth $h > 0$.

\subsubsection{Test 1: Convergence}
In this numerical experiment we solve \eqref{eq:1} on  $\Omega = (-1,1)$ with initial data given by
\[
u_0 = \sin(\pi x) \quad \text{ and } \quad v_0 = \sin(\pi x),
\]
 choose the parameters $k = 0.09$, $r=0$ and $a = 30$ and set the final time to $T = 1/2$. As error measure we use the maximum over time of the discrete energy error: 
\begin{equation}
    \label{equ:errorMORETIME}
    \begin{split}      
     \text{error} =& \max_{1\leq n \leq N} \left\| \frac{\uex_n - \uex_{n-1}}{\tstep} - \frac{u_n^h - u_{n-1}^h}{\tstep} \right\|_{L^2}\\
&+ \max_{1\leq n \leq N}  \left\|\nabla \frac{\uex_n + \uex_{n-1}}{2} - \nabla\frac{u_n^h + u_{n-1}^h}{2} \right\|_{L^2}.
    \end{split}
\end{equation}
As the exact solution is not available, for $\uex$ we use a numerical solution on a fine mesh with spatial mesh-width $h = 1.7 \times 10^{-3}$ and $\tstep^{\text{ex}} = 3.1 \times 10^{-4}$.  The same spatial-mesh is used for $u_n^h$ with a range of time-steps $\tstep$. Theorem~\ref{thm:conv} predicts  $\O(\tstep)$ convergence of the error if no correction is used and  $\O(\tstep^{1+\mu})$ for the scheme with the correction.

The numerical results for the version without the correction is shown in Figure~\ref{fig:conv_nocorr}. We see that similar rate of convergence is seen for both values of $\mu$ shown and that it is close to the predicted linear convergence. In Figure~\ref{fig:conv_corr}, where the convergence of the corrected scheme is shown, we see  better convergence for larger $\mu$ closely following the predicted order $\O(\tstep^{1+\mu})$.

\begin{figure}
    \centering
    \includegraphics[scale = 0.6]{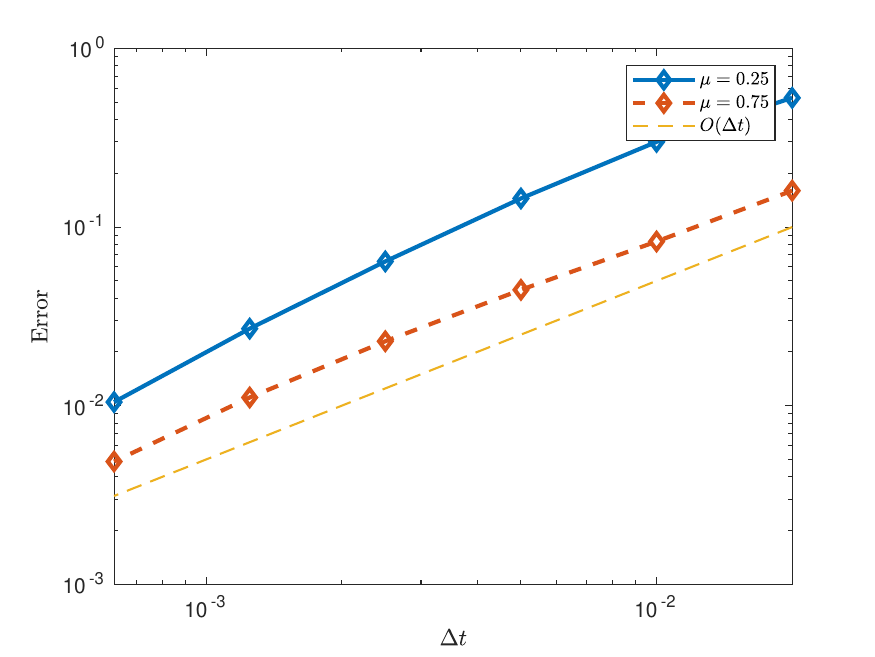}
    \caption{\label{fig:conv_nocorr} Convergence of the maximum energy error for the numerical scheme without the correction term for $\mu = 0.25$ and $\mu = 0.75$. Predicted convergence order of $\O(\tstep)$ is also shown.}
\end{figure}

\begin{figure}
    \centering
    \includegraphics[scale = 0.6]{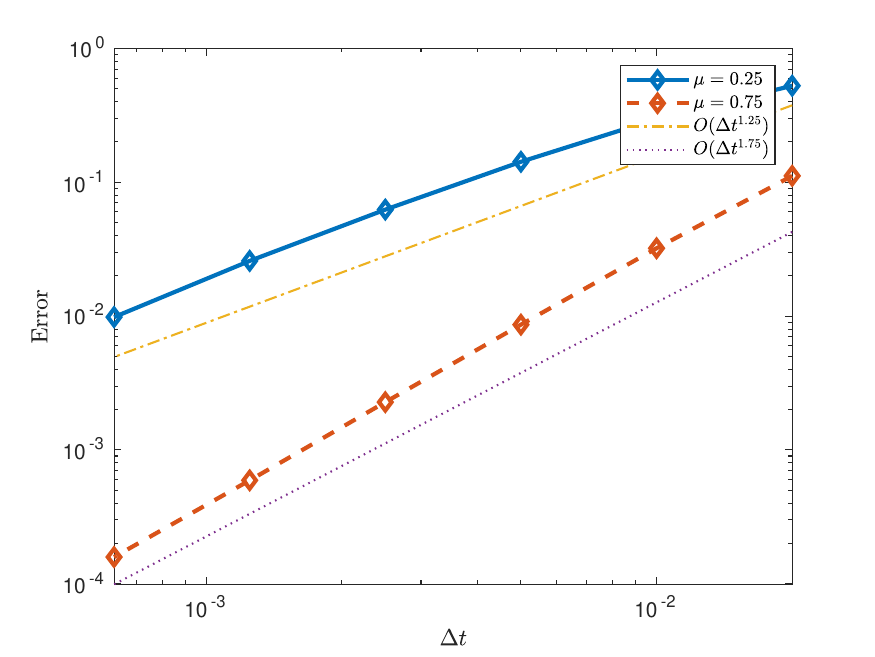}
    \caption{\label{fig:conv_corr} Convergence of the maximum energy error for the numerical scheme with the correction term included for $\mu = 0.25$ and $\mu = 0.75$. Predicted convergence order of $\O(\tstep^{1+\mu})$ is also shown.}
\end{figure}

\subsubsection{Test 2: Changing $k$}

For the remaining 1D experiments we solve \eqref{eq:1} on $\Omega = (0,20)$  and, unless otherwise stated, consider the example with initial data given by a Gaussian
\begin{equation}
    \label{equ:inidata}
    u_0 = 5e^{-\frac{(x-10)^2}{2}} \quad  \text{ and } \quad v_0 = 0.
\end{equation}
Note that while strictly speaking $u_0 \notin H_0^1(\Omega)$,  $u_0$ is zero close to machine precision on  the boundary of $\Omega$.

%The value of $k$ controls the nonlinearity in the system. If $(1-2k u)$ becomes negative then the solution will degenerate, hence in our experiments we choose values of $k$ such that initially at $t=0$ the following holds: $(1-2ku_0)>0$ and is away from 0.

In Figure~\ref{fig:kchangenoFD} we show the solution of \eqref{eq:1} without the fractional derivative ($a=0$) at time $T=4$ for various choices of $k$. This figure shows that as $k$ gets larger and $(1-2ku)\to 0$ the nonlinearity has a stronger effect on the wave form, resulting in the formation of a sawtooth shape.

\begin{figure}[H]
    \centering
    \includegraphics[scale=0.35]{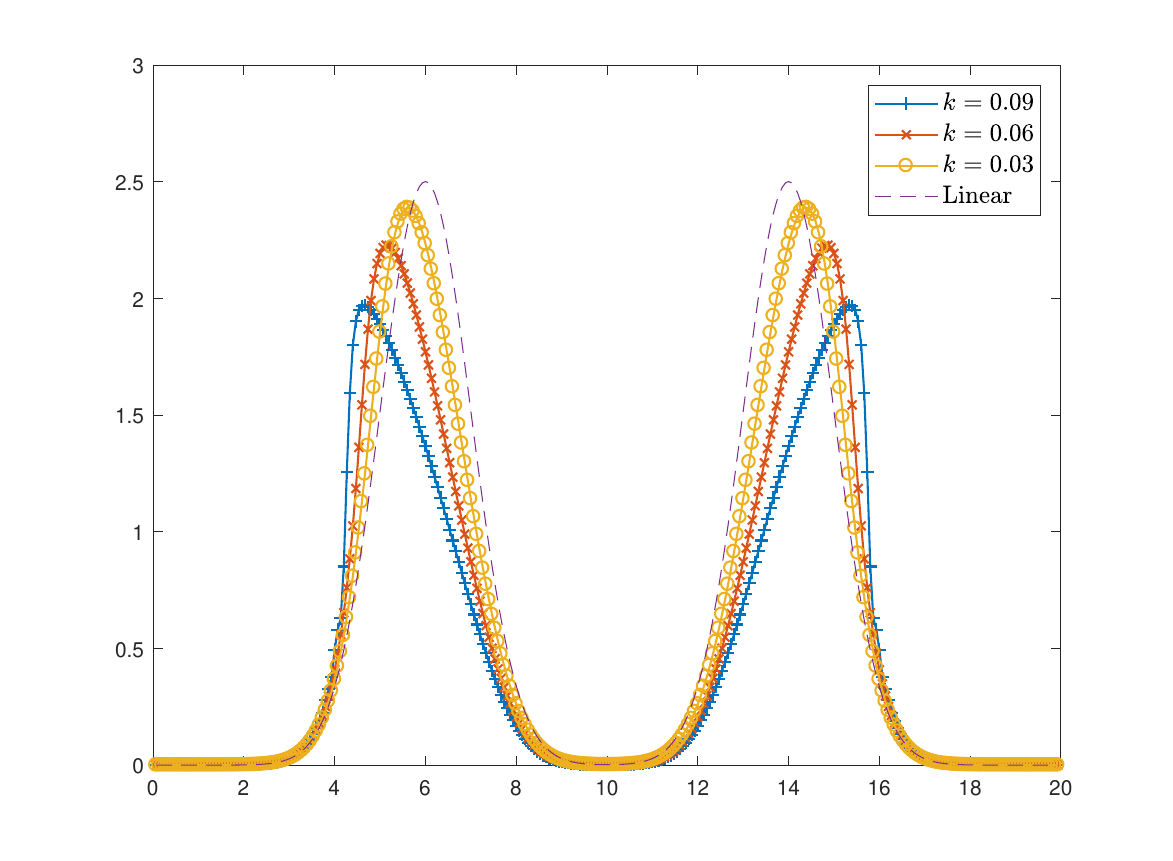}
    \caption{\label{fig:kchangenoFD} Solution of \eqref{eq:1} at $T=4$ approximated with the scheme \eqref{equ:full_disc} with $a=0$ for various values of $k$.}
\end{figure}

When we reincorporate the fractional derivative, choosing $a=1$, $r=0$ and $\mu =0.5$, we still see the damping from the nonlinearity but no longer observe the sawtooth formation. Instead the strong fractional damping term controls the form of the solution, causing more parabolic-like behaviour, i.e, the solution is trying to disperse rather than form a travelling wave; see Figure~\ref{fig:kchangeFD}. 
\begin{figure}[H]
    \centering
    \includegraphics[scale=0.4]{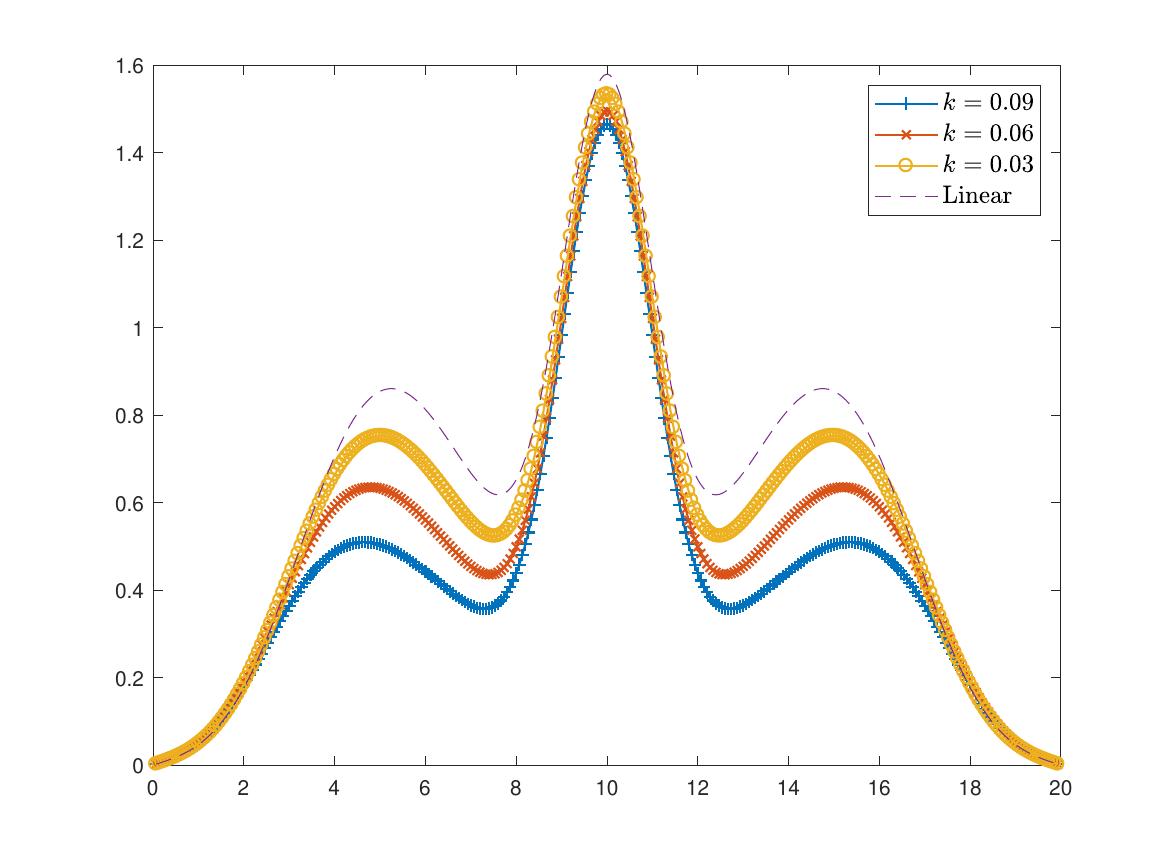}
    \caption{\label{fig:kchangeFD} Solution of \eqref{eq:1} at $T=4$ approximated using the scheme \eqref{equ:full_disc} with $a=1$, $\beta =\betaA$, $\mu = 0.5$, and $r=0$, for various values of $k$.}
\end{figure}

\subsubsection{Test 3: Changing  $a$}

This test investigates how changing the size of the coefficient scaling the fractional derivative, with $\mu = 0.5$ and $r=0$,  affects the form of the solution over time. We let $k = 0.09$, since the previous experiment has shown that this will give a strong effect from the nonlinearity without causing shocks to form, at least  up to time $T=4$.

In these experiments we consider $\Omega = (0,40)$ and  the initial data 
$$
    u_0 = 5e^{-\frac{(x-20)^2}{2}} \quad  \text{ and } \quad  v_0= 0.
$$

Figure~\ref{fig:alpchange} shows the progression of the wave up to time $T=4$ over regular intervals for different values of $a$. For $a=10$  a travelling wave does not form, rather the solution attempts to disperse, and after the initial damping from $t_n=0$ to $t_n = 0.8$ the solution is minimally damped. For $a=0.1$  the nonlinearity has more control, since it appears almost identical to the solution with no involvement of the fractional derivative ($a = 0$). Lastly, the case  $a=1$ shows a balance of effects from the strong damping and nonlinear terms. Finally, for $a = 0$, letting the experiment run until $T = 8$, a shock seems to begin to form; see Figure~\ref{fig:alpchangeallend_longwest}.

\begin{figure}
\includegraphics[width=.8\textwidth]{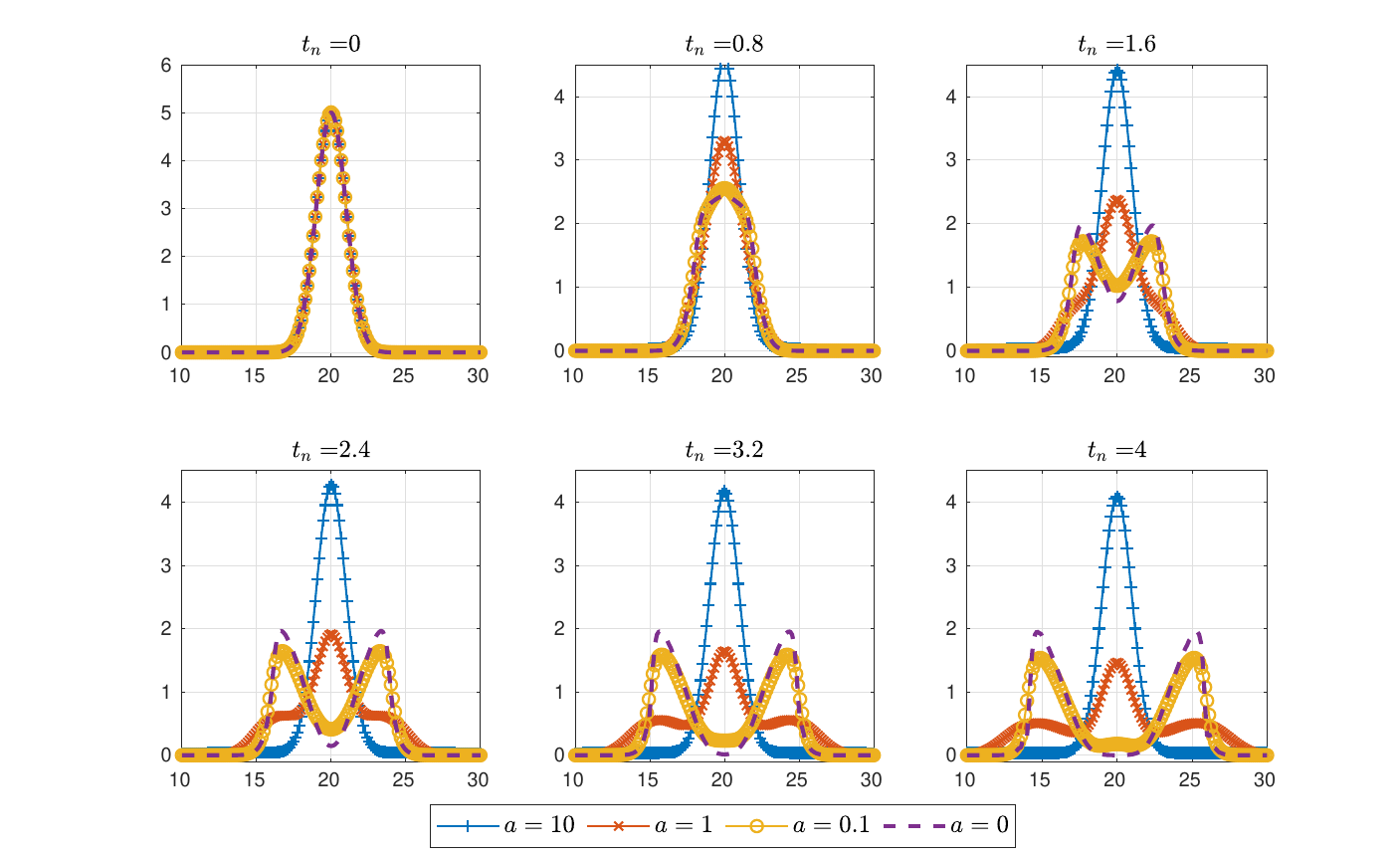}
        \caption{\label{fig:alpchange} Solution of \eqref{eq:1} at various time points up to $T=4$ approximated with the scheme \eqref{equ:full_disc} with $k=0.09$,  $\mu = 0.5$, and $r=0$, for various values of constant $a$.}
\end{figure}

%To see how the solution behaves for longer times we continue the propagation up to $T=8$, the results of this are show in Fig~\ref{fig:alpchangeallend8}. Compared with the solution for $T=4$ (Fig~\ref{fig:alpchangeallend4}) the solutions are further damped and, excluding the more dispersive case where $a=10$, the waves travel further to the boundary. Note that in Fig~\ref{fig:alpchangeallend8} we have omitted the case without the fractional derivative, this is because a shock has formed where the top of the wave is travelling faster then the base, see Fig~\ref{fig:alpchangeallend_longwest}. One can see that for $a=0.1$ at time $T=8$ this shock behaviour is beginning to occur, using a finer mesh could control the instability caused.

% \begin{figure}[H]
%      \centering
%      \begin{subfigure}[b]{0.49\textwidth}
%          \centering
%          \includegraphics[width = \textwidth]{alpchangeallend.eps}
%          \caption{\label{fig:alpchangeallend4} $T =4$}
%      \end{subfigure}
%      \begin{subfigure}[b]{0.49\textwidth}
%          \centering
%          \includegraphics[width = \textwidth]{alpchangeallend_long.eps}
%          \caption{\label{fig:alpchangeallend8} Long time: $T = 8$}
%      \end{subfigure}
%         \caption{\label{fig:alpchangeallend} Solution of \eqref{eq:1} at different times approximated with the scheme \eqref{equ:full_disc} with $k=0.09$ and within the definition of $\beta$ \eqref{eq:beta} $\mu = 0.5$ and $r=0$, for varying values of $a$.}
% \end{figure}

\begin{figure}[H]
    \centering
    \includegraphics[scale=0.5]{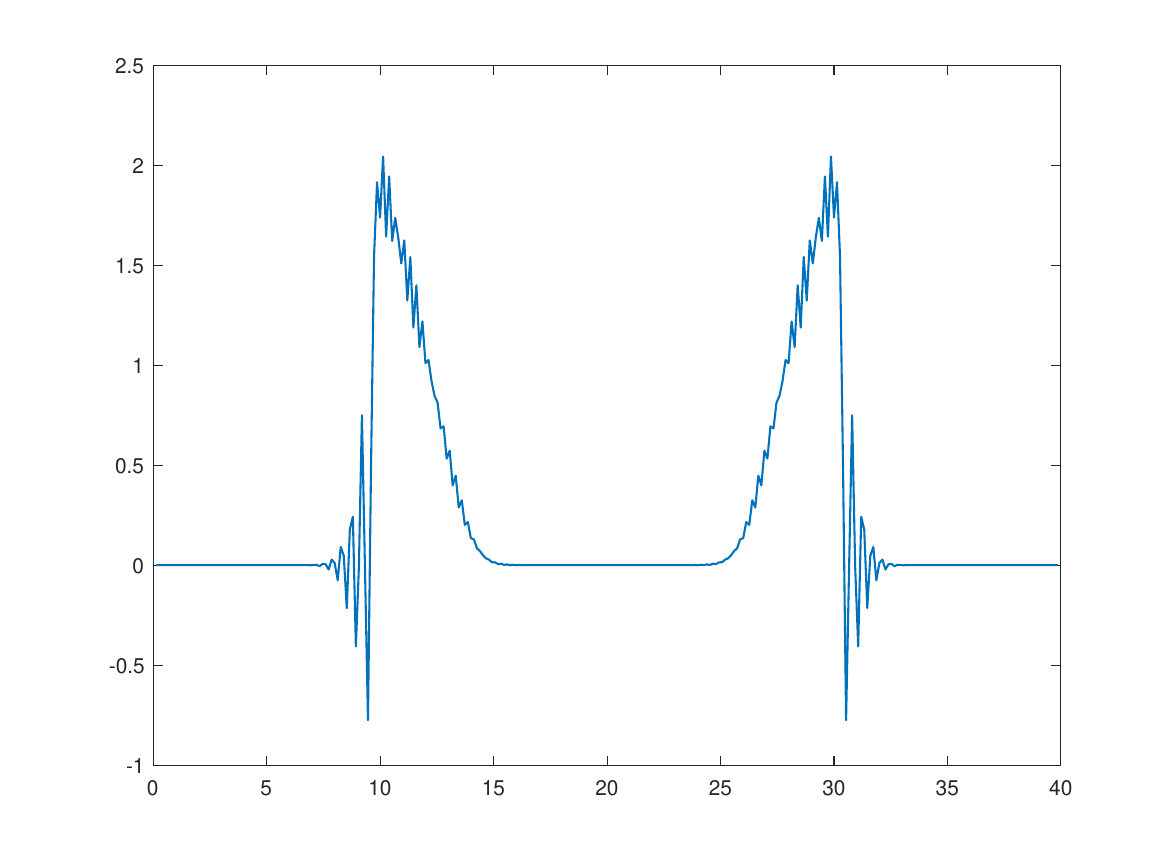}
    \caption{\label{fig:alpchangeallend_longwest} Solution of \eqref{eq:1} at $T=8$ approximated with the scheme \eqref{equ:full_disc} with $k=0.09$ and fixing $a=0$ to remove the fractional derivative.}
\end{figure}

\subsubsection{Test 4: Changing $\mu$}

These experiments demonstrate the effect the order of the fractional operator has on the  solution over time. 
%Fig~\ref{fig:muchangeall} shows that the stronger the order of the fractional derivative, that is, the closer $\mu$ tends to 0, the stronger the damping effect. It appears in Fig~\ref{fig:muchangeallNL} that for $\mu =0.9$ the derivative is small enough for the nonlinear effects have more control and being to manipulate the waveform. Whereas, for the higher order derivatives it merely damps the solution, compared to the case without the nonlinearity seen in Fig~\ref{fig:muchangeallL}.
In  Figure~\ref{fig:muchangeallend} we show the solution at $T=4$ with different values of $\mu$, with and without the nonlinearity. 

% \begin{figure}[H]
%      \begin{subfigure}[b]{0.7\textwidth}
%          \centering
%          \includegraphics[scale=0.6]{muchangeall.eps}
%          \caption{\label{fig:muchangeallNL} Nonlinear $k = 0.09$}
%      \end{subfigure}
%      \begin{subfigure}[b]{0.7\textwidth}
%          \centering
%          \includegraphics[scale=0.6]{Lgchangeall.eps}
%          \caption{\label{fig:muchangeallL} Linear $k = 0$}
%      \end{subfigure}
%         \caption{\label{fig:muchangeall} Solution of \eqref{eq:1} at various time points up to the end time $T=4$ approximated with the scheme \eqref{equ:full_disc} with $a=1$, $r=0$ and varying values of $\mu \in (0,1)$.}
% \end{figure}

\begin{figure}[H]
     \centering
     \begin{subfigure}[b]{0.49\textwidth}
         \centering
         \includegraphics[width = \textwidth]{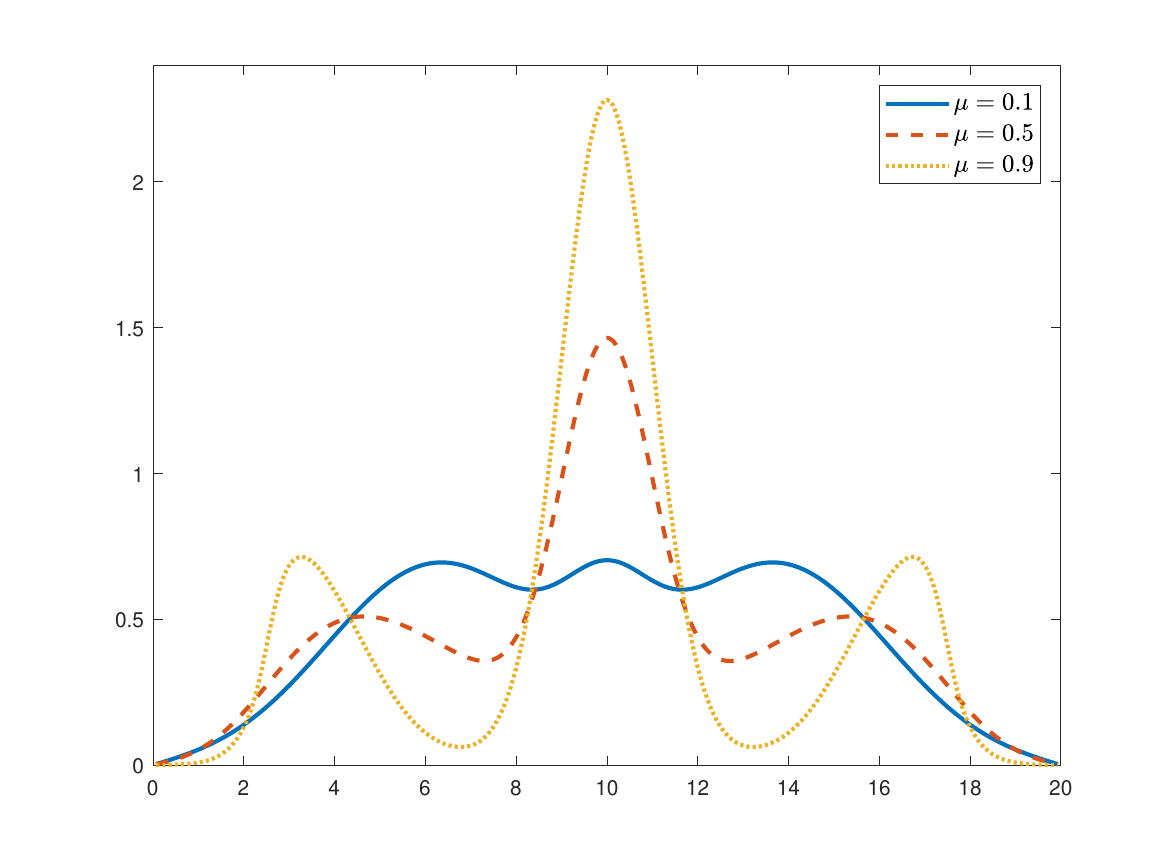}
         \caption{\label{fig:muchangeallendNL} Nonlinear $k = 0.09$}
     \end{subfigure}
     \begin{subfigure}[b]{0.49\textwidth}
         \centering
         \includegraphics[width = \textwidth]{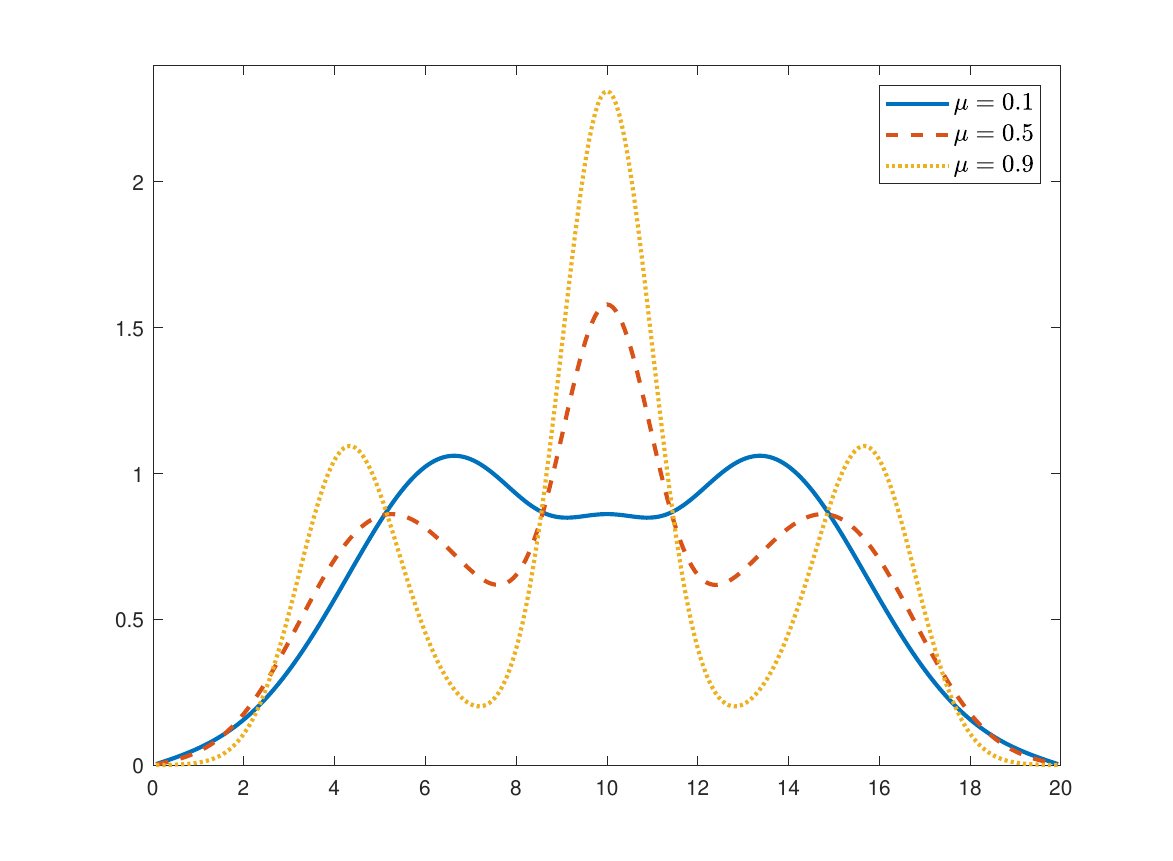}
         \caption{\label{fig:muchangeallendL} Linear $k = 0$}
     \end{subfigure}
        \caption{\label{fig:muchangeallend} Solution of \eqref{eq:1} at the end time $T=4$ approximated with the scheme \eqref{equ:full_disc} with $a=1$, $r=0$ and varying values of $\mu \in (0,1)$.}
\end{figure}

\subsubsection{Test 5: Changing $r$}

In this experiment we vary the value of $r$, recalling that we choose $\beta = \betaA$ and that $\hat{\betaA}(z) = (z+r)^{-\mu}$.

Considering the previously stated initial conditions \eqref{equ:inidata} and  fixing $a = 1$, $\mu = 0.5$, we compute the approximate solution up to final time $T=4$. The solutions for various values of $r$ at the final time T for the nonlinear ($k = 0.09$) and linear ($k = 0$) cases are presented in Figure~\ref{fig:rchangeallend} where we see that as $r$ gets larger the fractional operator displays weaker dispersive behaviour and the solution looks more like a travelling wave solution. 

% \begin{figure}[H]
%      \begin{subfigure}[b]{0.7\textwidth}
%          \centering
%          \includegraphics[scale=0.6]{rchangeall.eps}
%          \caption{\label{fig:rchangeallNL} Nonlinear $k = 0.09$}
%      \end{subfigure}
%      \begin{subfigure}[b]{0.7\textwidth}
%          \centering
%          \includegraphics[scale=0.6]{Lrchangeall.eps}
%          \caption{\label{fig:rchangeallL} Linear $k = 0$}
%      \end{subfigure}
%         \caption{\label{fig:rchangeall} Solution of \eqref{eq:1} at various time points up to the end time $T=4$ approximated with the scheme \eqref{equ:full_disc} with $\alpha=1$, $\mu = 0.5$ and varying values of $r$.}
% \end{figure}

\begin{figure}[H]
     \centering
     \begin{subfigure}[b]{0.49\textwidth}
         \centering
         \includegraphics[width = \textwidth]{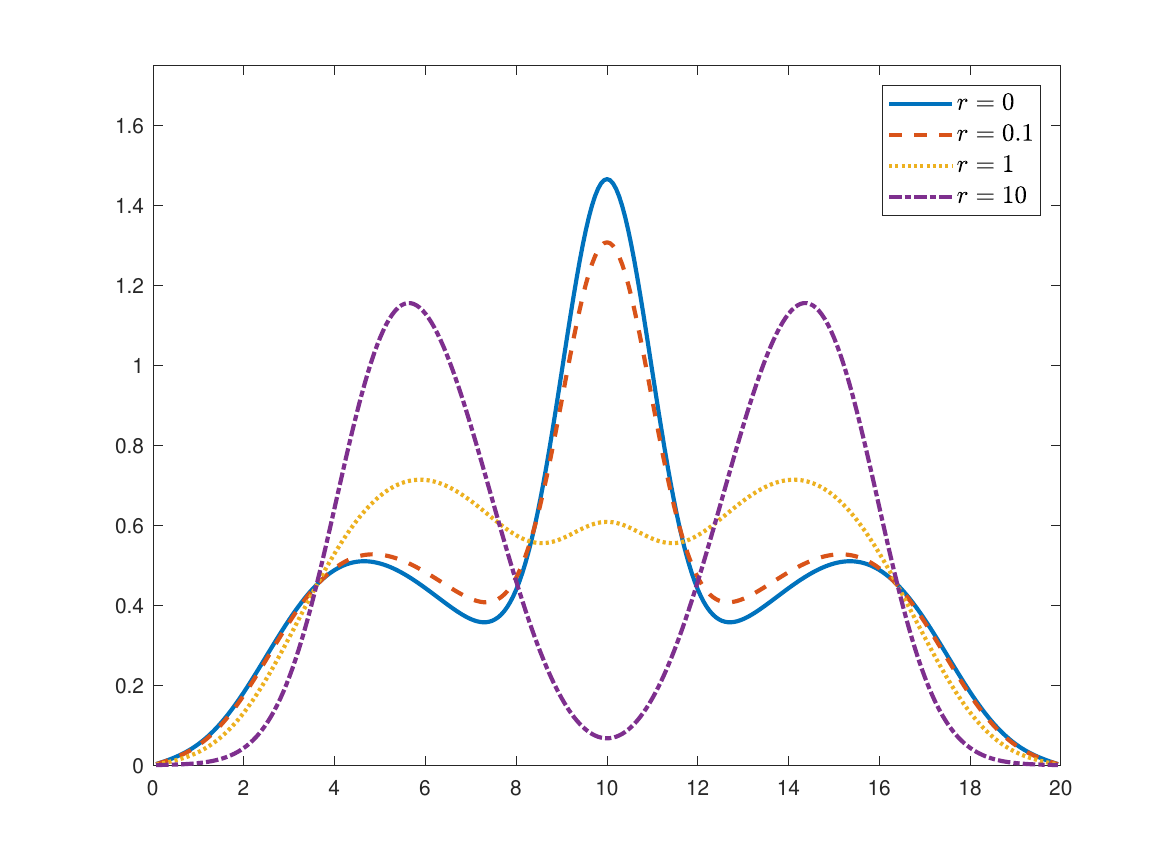}
         \caption{\label{fig:rchangeallendNL} Nonlinear $k = 0.09$}
     \end{subfigure}
     \begin{subfigure}[b]{0.49\textwidth}
         \centering
         \includegraphics[width = \textwidth]{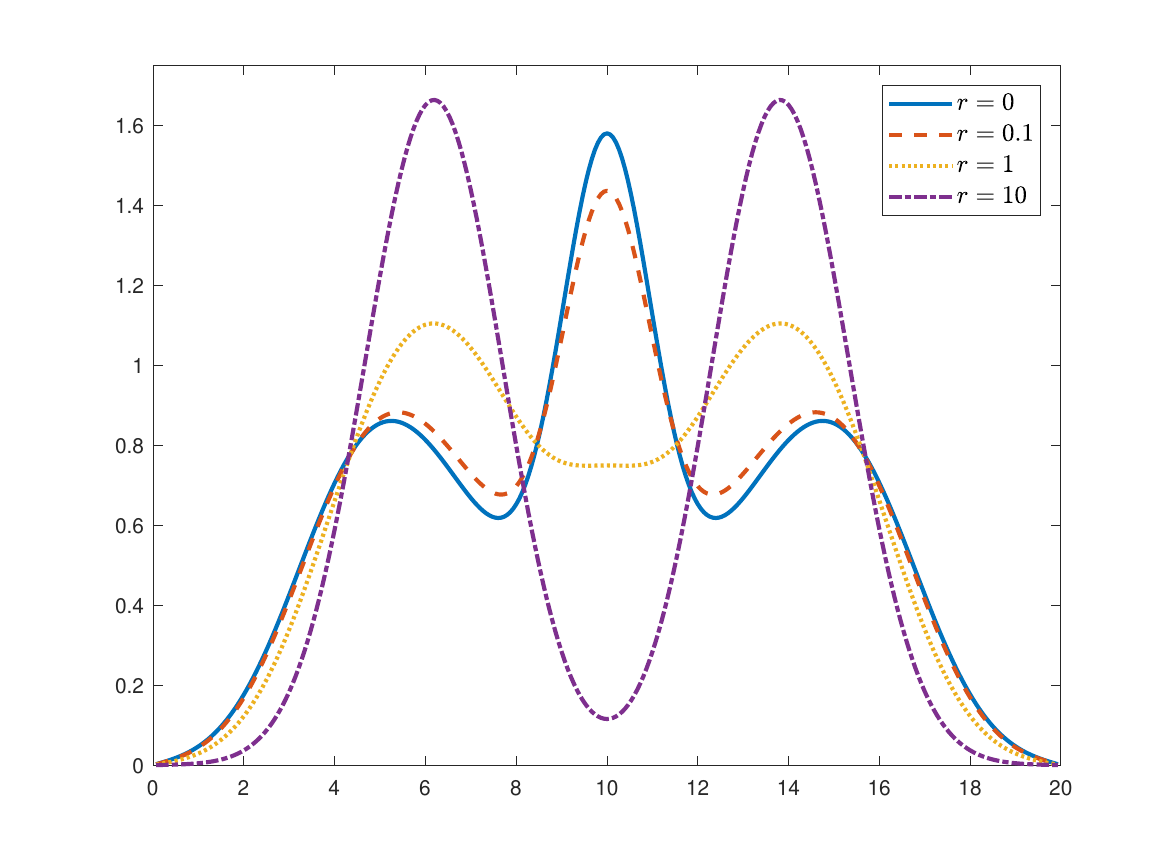}
         \caption{\label{fig:rchangeallendL} Linear $k = 0$}
     \end{subfigure}
        \caption{\label{fig:rchangeallend} Solution of \eqref{eq:1} at the end time $T=4$ approximated with the scheme \eqref{equ:full_disc} with $a=1$, $\mu = 0.5$ and varying values of $r$.}
\end{figure}

\subsection{2D Experiments}
\label{subsec:2Dnum}

In this section we present results of numerical simulations  for \eqref{eq:1} in 2D, where $\Omega = (-1,1)^2$ is a square. We let  the exact solution be
\begin{equation}\label{exact_2D}
    u(x,t) = \left( \sin ( 24 t) + \cos (12t)\right) \sin( \pi x) \sin (\pi y) ,
\end{equation}
and choose a corresponding source term $f$ so as to obtain 
the fully discrete system
%Since we do not know the exact solution to equation ,  We as such,
\begin{equation}
\label{equ:NLW_with_f}
\begin{split}
    \big\langle \big( 1-2 k \{u^h\}_n \big) D^2 u_n^h ,v  \big\rangle + \big\langle \nabla\{  u^h \}_n,\nabla v \big\rangle + &a \big\langle \beta \ast_{\tstep} D \nabla  u_n^h , \nabla v \big\rangle \\&= 2k \big\langle \big( D u_n^h \big)^2 , v \big\rangle + \big\langle f^h, v \big\rangle,
\end{split}
\end{equation}
where standard CQ without the correction term is used.
%which  
%We approximate solutions on $\Omega$ with Dirichlet boundary conditions, where 
%The computational domain is composed of $m$ grid points that are separated by a . 
We again use $\beta = \betaA$ and the various parameters are set to 
\[
a = 1, \quad k = 0.09, \quad \mu = 0.5, \quad r = 0.
\]
The experiments were performed using the finite element library Netgen/NGSolve package \cite{netgen1}.  In Figure~\ref{fig:2Dsol}, we show the projection of the initial data $u_0$ onto the finite element space, as well as the underlying automatically constructed triangular mesh.

\begin{figure}[H]
    \centering
    \includegraphics[scale = 0.45]{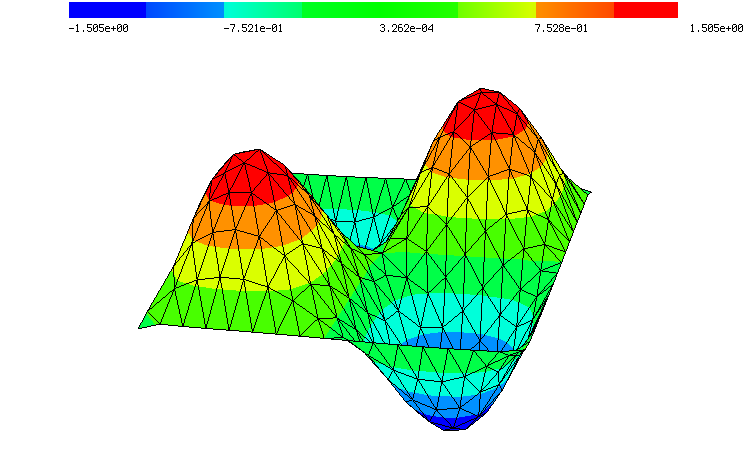}
    \caption{\label{fig:2Dsol} Projection of the initial data on the space of piecewise linear finite elements. The triangulation constructed by  Netgen/NGSolve  \cite{netgen1} is also seen.}
\end{figure}

To examine the convergence rate we compute the maximum $L^2$-error, given by 
\begin{equation}
    \max_{1\leq n\leq N} \left\| u_n - u(t_n) \right\|_{L^2}
\end{equation}
on increasingly finer meshes. In Figure~\ref{fig:2Dconvergence} we see, as expected, $\O(\tstep)$ convergence.

\begin{figure}[H]
    \centering
    \includegraphics[scale = 0.5]{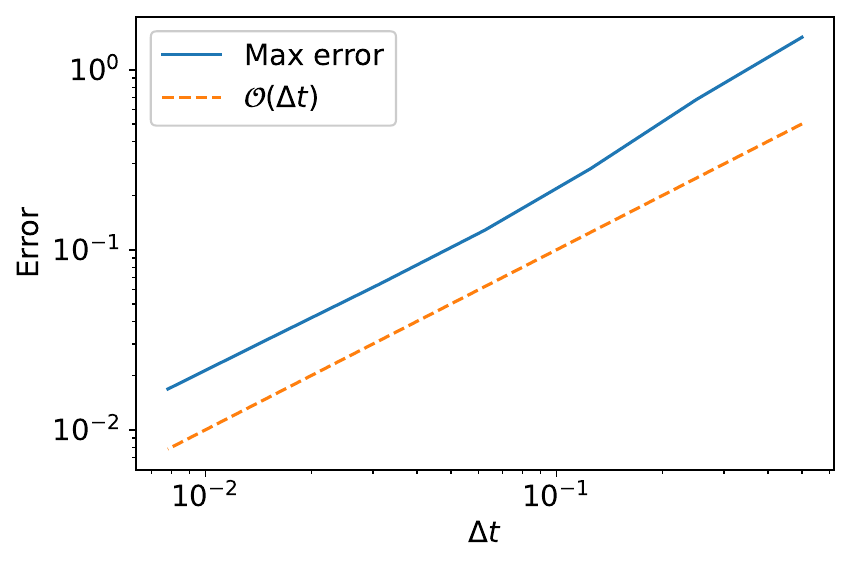}
    \caption{\label{fig:2Dconvergence} Maximum $L^2$-error of the approximated solution to \eqref{eq:1} generated using the scheme~\eqref{equ:NLW_with_f}.}
\end{figure}

%%%%%%%%%%%%%%%%%%%%%%%%%%%%%%%%%%%%%%%%%%%%%%%%%%%%
%%%%%%%%%%%%%%%%%%%%%%%%%%%%%%%%%%%%%%%%%%%
%%%%%%%%%%%%%%%%%%%%%%%%%%%%%%%%%%%%%%%%%%%%%%%%%%%%%%

\bibliographystyle{amsplain}
\bibliography{notes_biblio_2}

\section*{Appendix: Estimation details for the well-possedness proof}

To derive the first estimate in  Lemma~\ref{lem_apriori}, we 
consider $\partial_t u$ as a test function in  in the weak formulation of~\eqref{main_integro_lin} to obtain 
$$
\begin{aligned} 
 \int_{\Omega_\tau} \Big[\partial_t\big((1- 2k\tilde u) |\partial_t u|^2 \big) + 
 \partial_t |\nabla u|^2  + 2 a 
 \beta \ast \partial_t \nabla u  \partial_t \nabla u  \Big] dx dt  
 \\
 \leq  \int_{\Omega_\tau} 2k |\partial_t \tilde u| 
 |\partial_t  u |^2 dx dt,  
\end{aligned}
$$
for $\tau \in (0,T]$, where $\Omega_\tau = (0,\tau) \times \Omega$. 
Using the nonnegativity of the third term on the left-hand side and the regularity  of $\tilde u$,  and applying the Gr\"onwall inequality we obtain the first estimate in the proof of the lemma.

Considering $-\Delta \partial_t u$ as a test function for \eqref{main_integro_lin} and integrating by parts in the first term and in the term on the right-hand side we obtain
$$
\begin{aligned} 
& \int_{\Omega_\tau} \Big[\partial_t \big((1- 2 k \tilde u)|\partial_t \nabla u |^2\big) + \partial_t |\Delta u |^2 + 2 a \beta\ast \partial_t\Delta u \partial_t \Delta u \Big]dx dt    \\
& = \int_{\Omega_\tau} \Big[ 4k \nabla \tilde u \partial_t^2 u \partial_t \nabla u   + 2k \partial_t \tilde u |\partial_t \nabla u|^2
+ 4k \partial_t \nabla \tilde u \partial_t u \partial_t \nabla u \Big]dx dt, \; \text{ for } \tau \in (0, T]. 
\end{aligned} 
$$
Using that 
\begin{equation}\label{second_deriv_strong}
\partial_t^2 u = \frac 1{ 1- 2k \tilde u} \Big(\Delta u + a \beta\ast \partial_t \Delta u +  2k\partial_t \tilde u \partial_t u \Big), 
\end{equation}
we can estimate the terms on the right-hand side as 
$$
\begin{aligned} 
  \int_{\Omega_\tau} |\nabla \tilde u \partial_t^2 u \partial_t \nabla u| dx dt 
 \leq \frac{1}{2\kappa}\int_{\Omega_\tau} |\nabla \tilde u|\Big(|\Delta u|^2 +  |\partial_t \nabla u|^2 + 4k |\partial_t \tilde u| |\partial_t u||\partial_t \nabla u| \Big) dx dt 
 \\
  \qquad + \frac{1}{2\kappa^2}  \frac 1 \varsigma\int_{\Omega_\tau} |\nabla \tilde u|^2 |\partial_t \nabla u|^2 dx dt  +  \frac{\varsigma a^2} 2 \int_{\Omega_\tau} |\beta\ast \partial_t \Delta u|^2dx dt\\
 \quad  \leq \frac{1}{2\kappa} \int_0^\tau \|\nabla \tilde u\|_{L^\infty}\Big(\|\Delta u\|^2_{L^2}   
 + \Big[1+ \frac 1{\kappa\varsigma} \|\nabla \tilde u\|_{L^\infty} + 4k C_\Omega \|\nabla \partial_t \tilde u\|^2_{L^2}\Big] \|\partial_t \nabla u\|^2_{L^2}\Big) dt 
 \\
 \qquad  + \frac{\varsigma a^2}2 \int_{\Omega_\tau}  |\beta\ast \partial_t \Delta u|^2 dx dt 
\end{aligned}
$$
and 
$$
\begin{aligned} 
 \int_{\Omega_\tau} \Big[2 k \partial_t \tilde u |\partial_t \nabla u|^2
+ 4k \partial_t \nabla \tilde u \partial_t u \partial_t \nabla u \Big]dx dt
\leq 2k\int_0^T \Big[  \|\partial_t \tilde u\|_{L^\infty} \|\partial_t \nabla u\|^2_{L^2} \quad \\
+  2 C_\Omega  \|\nabla \partial_t \tilde u\|_{L^4} \|\nabla \partial_t u\|_{L^2}^2 \Big]  dt.
\end{aligned}
$$
Then using estimate in Lemma~\ref{lem:time_lower} and applying the Gr\"onwall inequalities yields the second estimate in the proof of Lemma~\ref{lem_apriori}.

Applying $\Delta$ to  \eqref{main_integro_lin} and taking $\Delta \partial_t u$ as a test function in the weak formulation of the problem  imply 
\begin{equation}\label{eq_Laplace_1}
\begin{aligned} 
& \int_{\Omega_\tau} \Big(\partial_t \big((1- 2 k \tilde u)|\partial_t \Delta u |^2\big) + \partial_t |\Delta \nabla u |^2 + 2 a \, \beta\ast \partial_t\Delta \nabla u \partial_t \Delta \nabla u \Big)dx dt    \\
& \quad = 
\int_{\Omega_\tau} \Big[
4k \Delta \tilde u \partial_t^2 u \partial_t \Delta u  
+ 8 k \nabla \tilde u \partial_t^2 \nabla u \partial_t \Delta u
+ 2k \partial_t \tilde u |\partial_t \Delta u|^2 
\\
& \qquad \qquad + 4k \partial_t \Delta \tilde u \partial_t u \partial_t \Delta u  + 
8k  \partial_t \nabla \tilde u \partial_t \nabla u \partial_t \Delta u \Big]dx dt = I_1 + I_2 + I_3. 
\end{aligned} 
\end{equation}
Using \eqref{second_deriv_strong} together with 
\begin{equation} \label{second_deriv_strong_1}
\begin{aligned} 
%\partial_t^2 u  =& \frac 1{ 1- 2k \tilde u} \Big(\Delta u + a \beta\ast \partial_t \Delta u +  2k\partial_t \tilde u \partial_t u \Big) \\
\partial_t^2 \nabla u  = & \frac 1{ 1- 2k \tilde u} \Big(\Delta \nabla u + a \beta\ast \partial_t \Delta \nabla u +  2k\partial_t \nabla\tilde u \partial_t u +  2k\partial_t \tilde u \partial_t\nabla u \Big)\\
& + \frac{2k\nabla \tilde u }{ (1- 2k \tilde u)^2} \Big(\Delta u + a \beta\ast \partial_t \Delta u +  2k\partial_t \tilde u \partial_t u \Big),
\end{aligned} 
\end{equation} 
the terms on the right-hand side in \eqref{eq_Laplace_1} can be estimated as 
$$
\begin{aligned} 
& |I_1| \leq  \frac {2k}\kappa \int_{\Omega_\tau} \Big( 2|\Delta \tilde u||\Delta u||\partial_t \Delta  u|  + 4k |\Delta \tilde u| |\partial_t \tilde u||\partial_t u| |\partial_t \Delta  u| \Big) dx dt 
 \\
 &\qquad  + 2k^2/(\kappa^2 \varsigma) \int_0^\tau \|\Delta \tilde u \|^2_{L^4} 
 \|\partial_t \Delta  u\|^2_{L^2} dt + 2\varsigma a^2 \int_0^\tau \|\beta\ast \partial_t \Delta u\|^2_{L^4} dt\\
&  \leq \frac {2k}\kappa \int_0^\tau \Big[ \|\Delta  \tilde u\|_{L^4} \big(C_\Omega\|\Delta \nabla u \|^2_{L^2} +  \|\partial_t \Delta  u\|^2_{L^2} \big)
 + 2k \Big(4 C_\Omega \|\Delta \tilde u\|_{L^4}\|\partial_t \nabla \tilde u\|_{L^2}\\
& \qquad  + \frac 1{\kappa \varsigma} \|\Delta \tilde u \|_{L^4}^2 \Big)\|\partial_t \Delta u \|_{L^2}^2 \Big] dt 
 + 
 2\varsigma a^2 C_\Omega \int_0^\tau \|\beta\ast \partial_t \Delta \nabla u\|^2_{L^2} dt
\end{aligned}
$$
and
$$
\begin{aligned}
& |I_2| 
 \leq \frac {4k}{\kappa} \hspace{-0.1 cm} \int_{\Omega_\tau} \hspace{-0.2 cm} |\nabla \tilde u| \Big[ |\Delta \nabla u|^2 + 4k \big( |\nabla \partial_t \tilde u| |\partial_t u| + 
  |\partial_t \tilde u| |\nabla \partial_t u| + |\partial_t \Delta  u|\big) |\partial_t \Delta  u|  \Big] dx dt 
 \\
 & \quad + \frac{8k^2}{\kappa^2 \varsigma}\int_0^\tau \|\nabla \tilde u\|^2_{L^\infty} \|\partial_t \Delta  u\|^2_{L^2} dt 
  + 
 \varsigma a^2  \int_{\Omega_\tau} 
 |\beta\ast \partial_t \Delta \nabla u|^2 dx dt\\
 & \quad + \frac{16k^2}{\kappa}  \int_{\Omega_\tau}  \Big(|\nabla \tilde u|^2 |\Delta u|
 |\partial_t \Delta u| + 2k |\nabla \tilde u|^2 |\partial_t \tilde u||\partial_t u||\partial_t \Delta  u| \Big) dx dt\\
 & \quad + \frac{16k^2}{\kappa}a  \int_{\Omega_\tau} |\nabla \tilde u|^2
 |\beta\ast \partial_t \Delta u| |\partial_t \Delta u| dx dt 
 \leq 8 \varsigma a^2 \int_0^\tau \|\beta \ast \partial_t \Delta \nabla u \|^2_{L^2} dt
 \\
&\quad   + 
 \frac {4k}{\kappa} \int_0^\tau  \|\nabla\tilde u\|_{L^\infty} 
 \Big[\big(1+ 2k C_\Omega \|\nabla \tilde u\|_{L^4}\big)\|\Delta \nabla u \|_{L^2}^2  + \Big(1 + 8k \|\nabla \partial_t \tilde u\|_{L^2} \\
 &\quad  + \frac{2k}{\kappa \varsigma} \|\nabla \tilde u\|_{L^\infty}+ 2k C_\Omega \|\nabla \tilde u\|_{L^4}(1 + 8 k \|\partial_t \nabla \tilde u\|_{L^2})   + \frac{2k^3}{\kappa \varsigma} \|\nabla \tilde u\|^3_{L^4} \Big)  \|\partial_t \Delta u \|^2_{L^2} \Big]  dt  .
\end{aligned}
$$
The last three terms on the right-hand side in \eqref{eq_Laplace_1} are estimated as 
$$
\begin{aligned}
|I_3| & \leq 2 k\int_0^\tau \Big[\|\partial_t \tilde u \|_{L^\infty} \|\partial_t \Delta u\|^2_{L^2} + 2 \|\partial_t \Delta \tilde u\|_{L^2}
 \|\partial_t u\|_{L^\infty} \|\partial_t \Delta u \|_{L^2}\\
 & + 4
 \|\partial_t \nabla \tilde u \|_{L^4} \|\partial_t \nabla u \|_{L^4} \|\partial_t \Delta u \|_{L^2} \Big] dt
 \leq 2 k \, C_\Omega\int_0^\tau   \|\partial_t \Delta \tilde u\|_{L^2}  \|\partial_t \Delta u \|^2_{L^2} dt.
\end{aligned}
$$
Collecting the estimates from above and applying the Gr\"onwall inequality yields the third estimate in the proof of Lemma~\ref{lem_apriori}.

To show that $\mathcal T \colon \mathcal K \to \mathcal K$ is a contraction in the proof of Theorem~\ref{thm:main_cont} we consider \eqref{main_integro_lin} for $\tilde u_1$ and $\tilde u_2$ in $\mathcal K$
and, taking $-\Delta \partial_t (u_1 - u_2)$ as a test function for the difference of the corresponding equations, obtain
$$
\begin{aligned} 
 & \int_{\Omega_T} \Big[\partial_t\big[(1- 2k \tilde u_1) |\nabla \partial_t (u_1 - u_2)|^2 \big] + \partial_t |\Delta (u_1-u_2)|^2 
\\
& \qquad +2 a \beta \ast \Delta \partial_t (u_1- u_2)\Delta \partial_t (u_1- u_2) \Big]dx dt 
 \\
&  = 4k \int_{\Omega_T} \Big[\nabla(\tilde u_1- \tilde u_2) \partial_t^2 u_2 \nabla \partial_t (u_1 - u_2) +  (\tilde u_1- \tilde u_2) \partial_t^2 \nabla u_2 \nabla \partial_t (u_1 - u_2) \\
& \qquad  +  \nabla \tilde u_1 \partial_t^2(u_1-u_2) \nabla \partial_t (u_1- u_2) \Big]dx dt 
 \\
&  + 2k  \int_{\Omega_T} \Big[ \partial_t \tilde u_1 |\nabla \partial_t(u_1- u_2)|^2 + 
 2\partial_t \nabla \tilde u_1  \partial_t(u_1- u_2)\nabla \partial_t(u_1- u_2) \\
& \qquad  + 2 \partial_t (\tilde u_1- \tilde u_2) \nabla \partial_t u_2 \nabla \partial_t (u_1 - u_2) +  2 \partial_t \nabla (\tilde u_1- \tilde u_2) \partial_t u_2 \nabla \partial_t (u_1 - u_2) \Big]dx dt,
%\\& = I_1 + I_2 + I_3 + I_4.
\end{aligned}
$$
where $\Omega_T = (0,T) \times \Omega$.
Using \eqref{second_deriv_strong} and \eqref{second_deriv_strong_1}, the terms on the righ-hand side of the last equality are estimated  as
$$
\begin{aligned} 
|I_1|\leq  & \int_{\Omega_T} |\nabla(\tilde u_1- \tilde u_2) \partial_t^2 u_2 \nabla \partial_t (u_1 - u_2)| dx dt \leq 
 \frac 1 { \kappa} \int_0^T \|\nabla (\tilde u_1-\tilde u_2)\|_{L^4} \Big(\|\Delta u_2\|_{L^4} \\
 & + a \|\beta \ast \partial_t \Delta u_2\|_{L^4} + 2k \|\partial_t u_2\|_{L^8} \|\partial_t \tilde u_2\|_{L^8} \Big) \|\nabla \partial_t (u_1 - u_2) \|_{L^2} dt,  
\end{aligned}
$$
$$
\begin{aligned} 
& |I_2| \leq   \int_{\Omega_T} |(\tilde u_1- \tilde u_2) \partial_t^2 \nabla u_2 \nabla \partial_t (u_1 - u_2)| dx dt \leq \frac 1 {\kappa} 
 \int_0^T \hspace{-0.2 cm} \|\tilde u_{1,2}\|_{L^\infty}
 \Big( a\|\beta\ast \Delta \nabla\partial_t  u_2\|_{L^2} 
 \\
 & + \|\Delta \nabla u_2\|_{L^2}+
 2k \|\nabla \tilde u_2\|_{L^\infty}\Big[ a\|\beta\ast \Delta  \partial_t u_2\|_{L^2} + 2k \|\partial_t u_2 \|_{L^4} \|\partial_t \tilde u_2 \|_{L^4} + \|\Delta u_2\|_{L^2} \Big]
\\
& \quad + 2k \|\partial_t \nabla \tilde u_2\|_{L^4} \|\partial_t u_2 \|_{L^4}+
2k \|\partial_t \nabla  u_2\|_{L^4} \|\partial_t \tilde u_2 \|_{L^4}
 \Big) \|\nabla \partial_t (u_1 - u_2)\|_{L^2} dt, 
\end{aligned}
$$
where $\tilde u_{1,2} :=\tilde u_1-\tilde u_2$,
$$
\begin{aligned} 
|I_3| & \leq  \int_{\Omega_T} |\nabla \tilde u_1 \partial_t^2(u_1-u_2) \nabla \partial_t (u_1- u_2) |dx dt \\
& \leq 
\frac 1 { \kappa } \int_0^T \|\nabla \tilde u_1\|_{L^\infty} 
\Big(\Big[2k \|\partial_t  u_2\|_{L^4} \|\nabla \partial_t(\tilde u_1- \tilde u_2)\|_{L^2}  + a \|\beta\ast \partial_t \Delta (u_1- u_2)\|_{L^2}\\
&\quad  + \|\Delta (u_1- u_2)\|_{L^2}\Big] \|\nabla \partial_t (u_1 - u_2)\|_{L^2} 
+ 2k \|\partial_t \tilde u_1\|_{L^4} \|\nabla \partial_t(u_1- u_2)\|^2_{L^2}  \Big) dt, 
\end{aligned} 
$$
and 
$$
\begin{aligned} 
|I_4|
& \leq 2k \int_0^T \Big[\Big( \|\partial_t \tilde u_1\|_{L^\infty}  + 2 \|\nabla \partial_t \tilde u_1 \|_{L^4} \Big) \|\nabla \partial_t(u_1- u_2)\|^2_{L^2} 
\\
& \quad + 2 \Big(\|\nabla \partial_t u_2\|_{L^4} + \|\partial_t u_2\|_{L^\infty}\Big) \|\nabla \partial_t(\tilde u_1- \tilde u_2)\|_{L^2} \|\nabla \partial_t(u_1- u_2)\|_{L^2} \Big]dt. 
\end{aligned} 
$$
Then using that  $\tilde u_j\in \mathcal K$, for $j=1,2$, and estimates for $u_j$ in Lemma~\ref{lem_apriori} we obtain the contraction inequality in Theorem~~\ref{thm:main_cont}, which, by applying the Banach fixed point theorem,   ensures the existence of a unique fixed point  of the map $\mathcal T$ and hence the existence of a unique solution of the nonlinear problem~\eqref{main_integro}.

\end{document}